\documentclass{amsart}
\usepackage[foot]{amsaddr}
\usepackage[textsize=scriptsize,colorinlistoftodos]{todonotes}

\usepackage[margin=1in]{geometry}
\usepackage{etoolbox}

\usepackage{longtable}

\newtoggle{final}

\usepackage{tikz}
\usetikzlibrary{matrix,arrows}
\usepackage{tikz-cd}

\usepackage{amsmath}
\usepackage{amssymb}
\usepackage{amsthm}
\usepackage{thmtools}
\usepackage{amscd}
\usepackage{enumerate}
\usepackage[pdfusetitle,unicode,hidelinks,hypertexnames=false]{hyperref}
\usepackage{bbm}
\usepackage{etoolbox}
\usepackage{hyperref}
\usepackage{quiver}

\usepackage[backend=biber,style=alphabetic]{biblatex}
\usepackage[utf8]{inputenc}
\usepackage[T1]{fontenc}

\newcommand{\timonemail}{\href{mailto:tt@math.ku.dk}{tt@math.ku.dk}}

\newtheorem{proposition}{Proposition}[section]
\newtheorem{corollary}[proposition]{Corollary}
\newtheorem{lemma}[proposition]{Lemma}
\newtheorem{theorem}[proposition]{Theorem}

\newtheorem*{conjecture*}{Conjecture}
\newtheorem*{theorem*}{Theorem}
\newtheorem*{corollary*}{Corollary}
\newtheorem*{proposition*}{Proposition}
\newtheorem*{lemma*}{Lemma}
\theoremstyle{definition}
\newtheorem{definition}[proposition]{Definition}
\newtheorem{construction}[proposition]{Construction}

\newtheorem*{definition*}{Definition}
\newtheorem*{construction*}{Construction}
\theoremstyle{remark}
\newtheorem{remark}[proposition]{Remark}
\newtheorem*{remark*}{Remark}

\newtheorem{example}[proposition]{Example}
\newtheorem*{example*}{Example}

\newcommand{\Z}{\mathbb{Z}}
\def\C{\mathcal C}
\newcommand{\N}{\mathbb{N}}

\newcommand{\LM}{\mathrm{L}_{\mathrm{mot}}}

\let\bb=\mathbb
\newcommand{\Gm}{\mathbb{G}_m}

\def\A{\bb A}

\newcommand{\SH}[1]{\mathcal{SH}(#1)}

\let\lim=\relax
\DeclareMathOperator*{\lim}{lim}
\def\CAlg{\mathrm{CAlg}}

\def\PSh{\mathcal{P}}

\def\Span{\mathrm{Span}}
\def\Cat{\mathrm{Cat}{}}

\newcommand{\Spec}{\mathrm{Spec}}

\let\cat=\mathrm
\def\Sm{{\cat{S}\mathrm{m}}}
\def\Sch{\cat{S}\mathrm{ch}{}}

\newcommand{\HZ}{H\mathbb{Z}}
\newcommand{\HZp}{\HZ^\wedge_p}
\newcommand{\KM}[2]{K^{\mathrm{M}}_{#1}\!\left(#2\right)}
\newcommand{\HS}[3]{\underline{\pi}_{#1}(#3)_{#2}}
\newcommand{\HSeff}[2]{\underline{\pi}^{\mathrm{eff}}_{#1}(#2)}
\newcommand{\KMS}[1]{\underline{k}^{\mathrm{M}}_{#1}}
\newcommand{\Sph}{\mathbb{S}}
\newcommand{\Spc}[1]{\mathcal{S}\mathrm{pc}(#1)}
\newcommand{\MGL}{\mathrm{MGL}}
\newcommand{\PMGL}{\mathrm{PMGL}}

\newcommand{\sus}{\Sigma_+^{\infty}}

\newcommand{\Bet}[1]{\mathrm{B}_{\text{ét}}#1}
\newcommand{\BGm}{\Bet{\Gm}}
\newcommand{\Proj}{\mathbb{P}}

\newcommand{\An}{\mathrm{Ani}}

\DeclareMathOperator*{\limit}{lim}
\newcommand{\ilim}[2]{\limit_{#1}\left(#2\right)}
\DeclareMathOperator*{\colimit}{colim}
\newcommand{\colim}[2]{\underset{#1}{\colimit}\left(#2\right)}

\newcommand{\Map}[3]{\mathrm{Map}_{#1}\left(#2,#3\right)}
\newcommand{\map}[3]{\mathrm{map}_{#1}\left(#2,#3\right)}

\newcommand{\fib}{\operatorname{fib}}

\newcommand{\sslash}{/\mkern-6mu/}

\usepackage[capitalise]{cleveref}
\usepackage{mathtools}
\usepackage{dsfont}
\theoremstyle{plain}

\crefname{theorem}{Theorem}{Theorems}
\crefname{introtheorem}{Theorem}{Theorems}
\crefname{lemma}{Lemma}{Lemmas}
\crefname{construction}{Construction}{Constructions}
\crefname{proposition}{Proposition}{Propositions}

\newcommand{\comp}{{{\kern -.5pt}\wedge}}

\iftoggle{final}{
	\newcommand{\NB}[1]{}
	\newcommand{\todolater}[1]{}

	\renewcommand{\todo}[1]{}
	
	\newcommand{\todoklaus}[1]{}
	\newcommand{\todotom}[1]{}
}{ 
	\newcommand{\NB}[1]{\todo[color=gray!40]{#1}}
	
	\newcommand{\todolater}[1]{\todo[color=brown!40]{#1}}

	\newcommand{\todoklaus}[1]{\todo[color=blue]{#1}}
	\newcommand{\todotom}[1]{\todo[color=green]{#1}}
}

\title{\texorpdfstring{Motivic $C_p$-power operations in $\MGL$}{Motivic Cp-power operations in MGL}}
\date{\today}

\author{Timon Tausendpfund}
\address{Department for mathematical sciences, University of Copenhagen}
\email{\timonemail}
\begin{document}

	\begin{abstract}
		In this paper, we give a new proof of the Hopkins-Morel-Hoyois theorem for fields $k$ of characteristic unequal to $p$ containing all primitive $p$-power roots of unity for a fixed prime $p$. This new proof follows Quillen's proof for the structure of the homotopy groups of $\mathrm{MU}$ which utilizes topological power operations. To this end we define a motivic analog of the Steenrod power operations, using the theory of normed motivic spectra as developed by Bachmann--Hoyois and their application for the definition of motivic colimits as developed by Bachmann-Elmanto-Heller.
	\end{abstract}
	\maketitle
	\tableofcontents

	\section{Introduction}

The algebraic cobordism spectrum $\MGL$ is one of the central objects in motivic homotopy theory. Just as in algebraic topology, a map $\MGL\to E$ of homotopy commutative ring spectra defines an \emph{orientation} on $E$. This definition can be used to develop a theory of orientations on $E$, as done for example in \cite{degliseorientation}. Orientation theory is an important tool for the study of vector bundles over schemes.

Thus, just as in algebraic topology, we would like to know what the homotopy groups $\pi_\ast(\MGL)_\ast$ are. This was almost achieved by Marc Hoyois in \cite{hoy15}, where he reverse engineered the calculation from talks of Fabien Morel and Mike Hopkins. He proves that, if $e$ is the exponential characteristic of a field $K$, then there is an equivalence of motivic spectra\footnote{See the introduction to \cite{hoy15} for the relation between the equivalence of spectra and the the homotopy groups we are interested in.}
\begin{equation*}
	\MGL/(a_1,a_2,\ldots)[1/e]\simeq H\Z[1/e].
\end{equation*}
Here the $a_i$ are a set of Lazard generators. In particular, over fields of characteristic $0$, the full answer is known. The problems, however, are field of characteristic $p$, where the full answer is still unknown.

In this paper we give an alternative proof of the Hopkins-Morel-Hoyois theorem over fields admitting all primitive $p$-th roots of unity. This new proof depends on the additional structure of $\MGL$ being a \emph{normed} Motivic spectrum, that is, $\MGL$ admitting multiplicative norms along finite étale morphisms. Using these norms, we can define the notion of motivic power operations. This enables us to copy quite closely the original proof of Quillen from \cite{BordismQuillen}. What makes this proof interesting is that it carries the potential for generalization. For example, if it would be possible to extend the transfers on the stable motivic homotopy category $\SH{S}$ for some qcqs scheme $S$ to finite flat morphisms, the proof we give could be extended to a more general setting. Furthermore, we are currently working on reducing on the number of primitive $p$-th power roots in the assumption.

\paragraph{\textbf{Outline of the proof}}
The proof proceeds in essentially three steps. In the first step, we explicitly calculate the motivic $C_p$-power operations on $\MGL$ and use this to give an explicit description of $p$-th powers of elements in $\MGL^{2\ast,\ast}$.

In a second step we explicitly calculate the bigraded homotopy groups $\pi_\ast(\MGL^\wedge_p)_\ast$. This happens by a two-fold induction. First we show that if we know the result for bidegrees $(s-1,\ast)$, then we also know the answer for the group $\pi_s(\MGL^\wedge_p)_0$. This uses the aforementioned control over $p$-th power of elements in the bigraded homotopy groups. In a second inductive step we show that, once we know the group $\pi_s(\MGL^\wedge_p)_0$, we know the whole diagonal $(s,\ast)$. This uses that $\MGL$ is an effective spectrum.

In the forth step, one uses the knowledge of the rationalization of $\MGL$ and the arithmetic fracture square to deduce the result for $\MGL$ itself. As we do not have anything new to add for the last step, we will not discuss this in this paper.

\paragraph{\textbf{Conventions}}
\begin{itemize}
	\item We freely use the language of $\infty$-categories, which we will hereafter simply call categories.
	\item We will denote the category of Anima or spaces by $\An$. Further, we denote the category of categories by $\mathrm{Cat}$.
	\item For a category $\C$, we write $\mathcal{P}(\C)=\mathrm{Fun}(\C^{\mathrm{op}},\An)$ for the free cocompletion of $\C$, also known as the presheaf category.
	\item We assume the reader to be familiar with terms from algebraic geometry. To fix notation, for us a scheme will always be a quasi-compact, quasi-separated (qcqs), if not explicitly stated otherwise. We write $\mathrm{Sch}$ for the category of all (qcqs) schemes, $\mathrm{Sch}_{S}:=\mathrm{Sch}_{/S}$ for the category of $S$-schemes for a fixed scheme $S$, $\mathrm{Sm}(S)$ for the subcategory of smooth schemes over a fixed base scheme $S$, $\mathrm{Sm}^{\mathrm{ft}}(S)$ for the full subcategory of smooth schemes of finite type, which is an essentially small category.
	\item We further assume the reader to be familiar with the notions of motivic homotopy theory. We refer to \cite{hoyois2017six} for an introduction to this topic.
\end{itemize}
\paragraph{\textbf{Aknowledgment}:} This paper is based on a master's thesis supervised by Prof.\! Dr.\! Tom Bachmann at the Johannes-Gutenberg University Mainz. I would like to express my deepest gratitude to him for his continuous support during and after writing the thesis. Further, I would like to thank my second supervisor Prof.\! Dr.\! Marc Hoyois for helpful discussions, in particular his explanations about the constructions of normed categories. I would also like to express my gratitude to Dr.\! Lorenzo Mantovani, Luca Passolunghi, and Prof.\! Dr.\! Georg Tamme for many helpful discussions. Finally, I acknowledge the financial support provided by the deutsche Forschungsgemeinschaft (DFG, German Research Foundation) through the Collaborative Research Center TRR 326 GAUS Geometry and Arithmetic
of Uniformized Structures).
	\section{Motivic power operations}
In this introductory section we are going to define the motivic power operations, the main ingredient in our new proof of the Hopkins-Morel-Hoyois theorem.

\subsection{The normed motivic stable homotopy category}
Recall from \cite[Section 6]{NormsBachmannHoyois} that the motivic stable homotopy category $\SH{-}:\Sch^{\mathrm{op}}\to \mathrm{Pr}^L$ can be extended to a normed category
\begin{equation*}
	\SH{-}^{\otimes}:\Span(\Sch,\mathrm{all},\mathrm{f\acute{e}t})\to \mathrm{Pr}^L,
\end{equation*} 
where $\Span(\Sch,\mathrm{all},\mathrm{f\acute{e}t})$ denotes the $2$-category of spans in schemes, where we allow all morphisms as left pointing ones, and only finite étale morphisms as right pointing ones. See for example \cite{hauhebetwovariab} for a rigorous definition of this category.

This additional structure allows the definition of the following functor, see \cite{MotivicColimits} and \cite{bachmannMMF} for details.
\begin{definition}\label{def: extended powers}
Let $S$ be a qcqs-scheme and $\Sigma_n$ the (finite) symmetric group on $n$-letters.
\begin{itemize}
	\item[1)] We define the \emph{enhanced $n$-fold smash powers} as the composition
	\begin{equation*}
		(-)^{\wedge\underline{n}}:\SH{S}\xrightarrow{\mathrm{triv}}\SH{S}^{\Sigma_n}\xrightarrow{p^\ast}\SH{S^{\sqcup n}}^{\Sigma_n}\xrightarrow{p_\otimes }\SH{S}^{\Sigma_n}.
	\end{equation*}
	Here, $p:S^{\amalg n}\to S$ is the codiagonal, which we consider as an $\Sigma_n$ equivariant map with respect to the permutation action on $S^{\amalg n}$ and the trivial action on $S$, $p^\ast$ is the induced pullback functor and $p_\otimes$ is the equivariant norms functor, see \cite[Corollary 3.33]{bachmannMMF}. Further, $\mathrm{triv}$ is the the functor endowing a motivic spectrum with the trivial action. Using the description of $\SH{S}^G\simeq \LM(\PSh{\mathrm{Sm}_S^G})[\mathrm{Sph}^{-1}_S]$, see \cite[Corollary 6.7]{hoyois2017six}, this functor is induced by the canonical functor $\Sm_S\to\Sm_S^G$ sending a scheme $X$ to the scheme with the trivial $G$-action, see also \cite[Appendix C.4]{MotivicColimits}.
	\item[2)] We define the \emph{geometric homotopy orbits functor} as the composition
	\begin{equation*}
		(-)_{\mathds{h}G}:=\SH{S}^G\xrightarrow{(-)\otimes (\mathbb{E}G)_+}\SH{S}^G[\mathcal{F}_{\mathrm{triv}}]\xrightarrow{(-)/G}\SH{S}.
	\end{equation*} 
	Here $(\mathbb{E}G)_+:=\Sigma^\infty_+\mathbb{E}_G$ is the geometric universal $G$-space, $\SH{S}^G[\mathcal{F}_{\mathrm{triv}}]$ is the full subcategory of generated by the spheres and those suspension spectra corresponding to $G$-schemes with trivial stabilizer. Lastly, $(-)/G$ is the left adjoint of $(-)\otimes (\mathbb{E}G)_+\circ \mathrm{triv}$, see \cite[Proposition C.5 (2)]{MotivicColimits}.
\end{itemize}
Let now  $G\subseteq \Sigma_n$ be a subgroup.
\begin{itemize}
	\item[3)] We define the \emph{geometric homotopy orbits functor} as the composition
	\begin{equation*}
		(-)_{\mathds{h}G}:=\SH{S}^G\xrightarrow{(-)\otimes (\mathbb{E}G)_+}\SH{S}^G[\mathcal{F}_{\mathrm{triv}}]\xrightarrow{(-)/G}\SH{S}.
	\end{equation*} 
	
	\item[4)] We define the \emph{geometric extend power functor} as the composite
	\begin{equation*}
		\mathbb{D}_G:\SH{S}\xrightarrow{(-)^{\underline{\wedge}n}}\SH{S}^{\Sigma_n}\xrightarrow{\mathrm{res}}\SH{S}^G\xrightarrow{(-)_{\mathds{h}G}}\SH{S}.
	\end{equation*}
	If $G=\Sigma_n$, we write simply $\mathbb{D}_{\Sigma_n}=:\mathbb{D}_n$.
\end{itemize}
\end{definition}
We summarize the most important properties of the motivic power operations and the geometric homotopy orbits for us here.
\begin{proposition}
	Let $G\subseteq \Sigma_n$ be a finite group, $S$ be a scheme.
	\begin{itemize}
		\item[1)] Let $E\in\SH{S}$ and $F\in\SH{S}^G$. Then there is a natural equivalence
		\begin{equation*}
			(E^{\mathrm{triv}}\otimes F)_{\mathds{h}G}\simeq E\otimes F_{\mathds{h}G}.
		\end{equation*}
		\item[2)] The functor $\mathbb{D}_G:\SH{S}\to\SH{S}$ is op-lax symmetric monoidal.
		\item[3)] \label{prop: extendedpowersandorientedspectra}
		Let $A\in\mathrm{NAlg}(\SH{S})_{\MGL/}$ and let $E\in\mathrm{Mod}_A(\SH{S})$. There is a canonical equivalence
		\begin{equation*}
			\mathbb{D}_G(\Sigma^{2,1}E)\simeq \Sigma^{2n,n}\mathbb{D}_G(E).
		\end{equation*}
	\end{itemize}
\end{proposition}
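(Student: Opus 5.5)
We treat the three parts in turn, using throughout the description $\SH{S}^G\simeq \LM(\PSh{\Sm_S^G})[\mathrm{Sph}_S^{-1}]$ from \cite[Corollary 6.7]{hoyois2017six} and the adjunction $(-)/G \dashv (-)\otimes(\mathbb{E}G)_+\circ\mathrm{triv}$ from \cite[Proposition C.5]{MotivicColimits}.

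\emph{Part 1).} Both sides are colimit-preserving in $E$ and in $F$. So it suffices to construct a natural comparison map and check it on generators. The comparison comes from the projection formula for the quotient functor. Since $\mathrm{triv}$ is symmetric monoidal, the adjunction identifies $(E^{\mathrm{triv}}\otimes F\otimes(\mathbb{E}G)_+)/G$ and $E\otimes (F\otimes(\mathbb{E}G)_+)/G$ by Yoneda, after passing to right adjoints:
\begin{equation*}
\mathrm{map}(E\otimes F_{\mathds{h}G},T)\simeq \mathrm{map}(F_{\mathds{h}G},\underline{\mathrm{hom}}(E,T)).
\end{equation*}
This works provided $\mathrm{triv}$ commutes with internal homs out of $E$ on the relevant subcategory. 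To avoid checking that, I would instead reduce to generators. Take $E=\Sigma^{-2m,-m}\Sigma^\infty_+X$ with $X$ smooth, and $F=\Sigma^{-2m',-m'}\Sigma^\infty_+Y$ with $Y$ a smooth $G$-scheme. Both sides then become $\Sigma^\infty_+\bigl((X\times Y\times \mathbb{E}G)/G\bigr)$ up to the same Thom twist, because $G$ acts trivially on $X$, so $(X\times Z)/G\simeq X\times Z/G$ for the free quotient $Z=Y\times\mathbb{E}G$. Here one uses that quotients commute with base change along the flat map $X\to S$.

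\emph{Part 2).} $\mathbb{D}_G$ is a composite of three functors, and each is op-lax symmetric monoidal. $(-)^{\wedge\underline{n}}$ is symmetric monoidal, because $p^\ast$ and the norm $p_\otimes$ are symmetric monoidal and the norms form part of a normed category. Restriction is symmetric monoidal. The functor $(-)_{\mathds{h}G}$ is a left adjoint whose right adjoint $(-)\otimes(\mathbb{E}G)_+\circ\mathrm{triv}$ is lax monoidal, using the diagonal $\mathbb{E}G\to\mathbb{E}G\times\mathbb{E}G$. A left adjoint of a lax monoidal functor is op-lax monoidal (doctrinal adjunction). A composite of op-lax functors is op-lax.

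\emph{Part 3).} This is the main step. The key input is a Thom isomorphism for the permutation representation. Observe that $(\Sigma^{2,1}E)^{\wedge\underline n}\simeq \mathrm{Th}(\rho_n)\otimes E^{\wedge\underline n}$, where $\rho_n=\mathcal{O}^n$ carries the permutation $\Sigma_n$-action; this holds since the norm of $\mathbb{P}^1$-suspension is the Thom space of the induced bundle (\cite{NormsBachmannHoyois}). Applying $(-)_{\mathds{h}G}$ produces a Thom spectrum over $\mathbb{E}G/G$ twisted by $E^{\wedge \underline n}$. Now use that $E^{\wedge\underline n}$ is a module over $A^{\wedge\underline n}$, which receives the norm-multiplication map to $A^{\mathrm{triv}}$ because $A$ is normed. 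Hence $\mathbb{D}_G(E)$ is an $\MGL$-module, so it is oriented. The Thom class of $\rho_n$ over $\mathbb{E}G\times_G(-)$ then gives an equivalence $\mathrm{Th}(\rho_n)\otimes E^{\wedge\underline n}\simeq \Sigma^{2n,n}E^{\wedge\underline n}$ after $\otimes(\mathbb{E}G)_+$. The obstacle is making this equivariant. A vector bundle $V$ on the free $G$-space $\mathbb{E}G\times X$ descends to $(\mathbb{E}G\times X)/G$, and the $\MGL$-Thom class of the descended bundle gives the trivialization in $\MGL$-modules. Since free $G$-objects are equivalent to objects over the quotient, we can apply $(-)/G$ to get $\mathbb{D}_G(\Sigma^{2,1}E)\simeq\Sigma^{2n,n}\mathbb{D}_G(E)$. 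Part 1) is used to move the trivial-action suspension out of the orbits.
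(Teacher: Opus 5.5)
Parts 1) and 2) are essentially sound, but Part 3), the main step, has a genuine gap: it breaks down where you conclude that $\mathbb{D}_G(E)$ is an $\MGL$-module.

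\emph{The module structure goes the wrong way.} The norm multiplication of $A$ is a ring map $A^{\wedge\underline{n}}\to A^{\mathrm{triv}}$ in $\SH{S}^{\Sigma_n}$. It lets you regard $A^{\mathrm{triv}}$-modules as $A^{\wedge\underline{n}}$-modules, not conversely. So $E^{\wedge\underline{n}}$ is only an $A^{\wedge\underline{n}}$-module, and this gives no $A^{\mathrm{triv}}$-module structure (let alone an $\MGL^{\mathrm{triv}}$-module structure) on it or on its orbits. Forcing one by base change to $A^{\mathrm{triv}}\otimes_{A^{\wedge\underline{n}}}E^{\wedge\underline{n}}$ changes the object: you get an $A$-linear extended power, not $\mathbb{D}_G(E)$.

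\emph{An $\MGL$-module structure on the orbits would not suffice anyway.} $\mathrm{Th}(\rho_n)$ carries a non-trivial $G$-action, so 1) does not let you pull it out of $(-)_{\mathds{h}G}$. You must trivialize $\mathrm{Th}(\rho_n)\otimes E^{\wedge\underline{n}}$ before taking orbits. Torsor-wise, this means trivializing $\mathrm{Th}(p_*\mathcal{O}_U)\otimes p_\otimes E_U$ naturally in $G$-torsors $p\colon U\to X$. Your descent sentence assumes the descended coefficient system is an $\MGL$-module over the quotient. But its fibres $p_\otimes E_U$ are modules over $p_\otimes A_U$, not over $A_X$ or $\MGL_X$, so an $\MGL$-Thom class of the descended bundle does not act on them. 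This is exactly where the normed (not merely homotopy-commutative) $\MGL$-orientation of $A$ has to be used, and your sketch gives no argument for it.

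For comparison, the paper does not reprove 3). It imports it from [MotivicColimits, Proposition 5.37] and only observes that for $G$-torsors $p\colon U\to X$ with $G\subseteq\Sigma_n$, the Weil restriction $p_*\mathcal{O}_U$ is still $\A^n$, so that proof applies verbatim. Your first observation, that the norm of $S^{2,1}$ is the Thom spectrum of the permutation bundle, matches this input. What is missing is the coherent trivialization of the twist on the $p_\otimes A_U$-modules.

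Parts 1) and 2) need only small repairs.
\begin{itemize}
\item In 1) you need no internal homs. Since $\mathrm{triv}$ is symmetric monoidal, the right adjoint $T\mapsto T^{\mathrm{triv}}\otimes(\mathbb{E}G)_+$ is $\SH{S}$-linear, and its mate gives the natural map $(E^{\mathrm{triv}}\otimes F)_{\mathds{h}G}\to E\otimes F_{\mathds{h}G}$. You then test this on generators. These must include $\Sigma^{-V}\Sigma^\infty_+Y$ for representation spheres $V$, not only $\Sigma^{-2m,-m}$, and $\mathbb{E}G$ must be written as a colimit of free smooth $G$-schemes.
\item In 2), $T\mapsto T^{\mathrm{triv}}\otimes(\mathbb{E}G)_+$ is the right adjoint of $(-)/G$, not of $(-)_{\mathds{h}G}$. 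Factor $(-)_{\mathds{h}G}=(-)/G\circ((-)\otimes(\mathbb{E}G)_+)$ and use that $(\mathbb{E}G)_+$ is idempotent. The lax structure comes from the projection $\mathbb{E}G\times\mathbb{E}G\to\mathbb{E}G$; the diagonal points the other way, though both are equivalences.
\end{itemize}
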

\begin{proof}
	These are \cite[Lemma C.6, Lemma 6.11 and 6.12, and Proposition 5.37]{MotivicColimits} respectively. Notice that the proof of 3) given in \cite{MotivicColimits} goes through unchanged for subgroups $G\subseteq \Sigma_n$, using \cite[Definition 5.6 (3)]{MotivicColimits} to see that for all $S$-schemes $f:X\to S$ and all $G$-principal bundles $p:U\to X\in\Bet{\Sigma_n}$, the Weil restriction $p_\ast O_U$ is still given by $\A^n$. See also the proof of \cite[Proposition 6.8]{MotivicColimits}.
\end{proof}
\begin{example}
	For $G=C_p\subseteq \Sigma_p$ we in particular obtain from \Cref{prop: extendedpowersandorientedspectra} that
	\begin{equation*}
		\mathbb{D}_{C_p}(\Sigma^{2,1}E)\simeq \Sigma^{2n,n}\mathbb{D}_{C_p}(E).
	\end{equation*}
\end{example}
\begin{remark}
	\label{rmk: normed pointed motivic spaces}
	The above constructions work verbatim for pointed motivic spaces $\Spc{-}_\bullet$, which is a normed category by \cite[Section 6.1]{NormsBachmannHoyois}, instead of the motivic stable homotopy category.
\end{remark}
We also need the following construction, see \cite[Section 6.2]{MotivicColimits}.
\begin{construction}
	\label{constr: Diagonal}
	Fix a natural number $n\in\mathbb{N}$ and let $S$ be a scheme. Let $X\in\Sm_{S}$. Consider $X$ with the trivial $\Sigma_n$-action and $X^{\times n}$ with the $\Sigma_n$-action given by permuting the factors. We obtain a natural $\Sigma_n$-equivariant structure on the diagonal $\Delta_n:X\to X^{\times n}$, natural in $X$, inducing a natural transformation
	\begin{equation*}
		(\Delta_n:\mathrm{triv}\Rightarrow (-)^{\times n}):(\Sm_{S})_+\to(\Sm_{S}^{\Sigma_n})_+.
	\end{equation*}
	The diagonal preserves Nisnevich and motivic equivalences on presheaves, so the above natural transformation induces a natural transformation
	\begin{equation*}
		(\Delta_n:\mathrm{triv}\Rightarrow (-)^{\times n}):\Spc{S}_\bullet\to\Spc{S}_\bullet^{\Sigma_n}.
	\end{equation*}
\end{construction}

\subsection{Definition of the power operations}
\label{Sec: definitionmotivicpoweroperations}

The central technical aspect needed in our proof is the construction of the power operations. 
\begin{definition}
	Let $S$ be a scheme, let $X\in\mathcal{P}(\Sm_{S})$ be a presheaf over $S$, and let $\MGL\to E\in\mathrm{NAlg}(\SH{S})$ be an oriented normed ring spectrum. We define the \emph{motivic $C_p$-power operation} as the composition
	\begin{align*}
		P_{C_p}:E^{2\ast,\ast}(X)=[\sus\LM X,\Sigma^{2\ast,\ast}E]\xrightarrow{\mathbb{D}_{C_p}(-)}&[\mathbb{D}_{C_p}(\sus\LM X),\mathbb{D}_{C_p}(\Sigma^{2\ast,\ast}E)]\\
		\cong& [\mathbb{D}_{C_p}(\sus\LM X),\Sigma^{2\ast p,\ast p}\mathbb{D}_{C_p}(E)]\\
		\xrightarrow{\Delta_p^\ast}& E^{2\ast p,\ast p}(X\times \Bet{C_p})
	\end{align*}
	where the equivalence is given by \Cref{prop: extendedpowersandorientedspectra}, and the last morphism is the diagonal natural transformation \Cref{constr: Diagonal}.
\end{definition}

\begin{remark}
	For a cohomology class $f\in E^{2n,n}(X)$, represented by a morphism $f:\Sigma^\infty_+X\to \Sigma^{2n,n}E$, consider the external $n$-th power of $f$
	\begin{equation*}
		(\Sigma^\infty_+X)^{\otimes p}\xrightarrow{f^{\otimes p}}(\Sigma^{2n,n}E)^{\otimes p}\xrightarrow{\mu}E
	\end{equation*}
	where the last morphism is the multiplication on $E$.
	
	Using \cite[Remark 6.14]{MotivicColimits}, we obtain a commutative diagram
	\[\begin{tikzcd}
		{(\Sigma^\infty_+ X)^{\otimes p}} & {(\Sigma^{2n,n}E)^{\otimes p}} & {\Sigma^{2np,np} E^{\otimes p}} & \Sigma^{2np,np}E \\
		{\mathbb{D}_{C_p}(\Sigma^\infty_+ X)} & {\mathbb{D}_{C_p}(\Sigma^{2n,n}E)} & {\Sigma^{2np,np}\mathbb{D}_{C_p}(E)}
		\arrow[from=1-1, to=1-2]
		\arrow[from=1-1, to=2-1]
		\arrow["\simeq", from=1-2, to=1-3]
		\arrow[from=1-2, to=2-2]
		\arrow["\mu", from=1-3, to=1-4]
		\arrow[from=1-3, to=2-3]
		\arrow[from=2-1, to=2-2]
		\arrow["\simeq", from=2-2, to=2-3]
		\arrow[from=2-3, to=1-4]
	\end{tikzcd}\]
	Here the last diagonal morphism is the morphism induced by the free forgetful adjunction of normed motivic spectra, see \cite[Section 16.4]{NormsBachmannHoyois} and \cite[Construction 2.5]{MotivicNormedSpectra}. We call the composition along the lower line of the diagram the \emph{external motivic $C_p$-power operation of $f$} and denote it by $P_{\mathrm{ext}}^{C_p}(f)\in\MGL^{2np,np}(\mathbb{D}_{C_p}(X))$. Then, for $f\in E^{2n,n}(X)$ a cohomology class, the motivic $C_p$-power operation $P_{C_p}(f)$ is represented by the composition 
	\begin{equation*}
		(\Sigma^\infty_+ X)\otimes(\Sigma^\infty_+\Bet{C_p})\to  \mathbb{D}_{C_p}(\Sigma^\infty_+ X)\xrightarrow{P_{\mathrm{ext}}^{C_p}(f)}\Sigma^{2np,np} E,
	\end{equation*}
	the first morphism is induced by the diagonal natural transformation from \Cref{constr: Diagonal} and applying the geometric orbits. 
\end{remark}
\begin{remark}
	By construction, see also \cite[Section 6.1]{MotivicColimits}, we see that pulling back along the inclusion $\iota:X\to X\times\Bet{C_p}$ gives an identification
	\begin{equation*}
		\iota^\ast\circ P_{C_p}=(-)^p:E^{2\ast,\ast}(X)\to E^{2\ast p,\ast p}(X).
	\end{equation*}
\end{remark}
\begin{remark}\label{rmk: multiplicativity PG}
	Let $G$ be a subgroup of $\Sigma_p$. Another consequence of the construction is the multiplicativity of the power operation. Consider the following commutative diagram, expressing the equality $(fg)^p=f^pg^p$
	\[\begin{tikzcd}
		{\Sigma^\infty_+X\otimes\Sigma^\infty_+X} & {(\Sigma^\infty_+X\otimes\Sigma^\infty_+X)^{\otimes p}} && {(\Sigma^aE\otimes\Sigma^b E)^{\otimes p}} && {(\Sigma^{a+b}E)^{\otimes p}} \\
		{\Sigma^\infty_+X} &&&&& {\Sigma^{(a+b)p}E} \\
		{(\Sigma^\infty_+X)^{\otimes p}} & {(\Sigma^\infty_+X)^{\otimes p}\otimes (\Sigma^\infty_+X)^{\otimes p}} && {(\Sigma^a E)^{\otimes p}\otimes (\Sigma^b E)^{\otimes p}} && {\Sigma^{ap}E\otimes \Sigma^{bp}E}
		\arrow[from=1-1, to=1-2]
		\arrow["{{{(f\otimes g)^{\otimes p}}}}", from=1-2, to=1-4]
		\arrow["\simeq"{description}, from=1-2, to=3-2]
		\arrow["{{{(\mu_2\otimes\mu_2)^{\otimes p}}}}", from=1-4, to=1-6]
		\arrow["\simeq"{description}, from=1-4, to=3-4]
		\arrow["{{{\mu_p}}}", from=1-6, to=2-6]
		\arrow[from=2-1, to=1-1]
		\arrow[from=2-1, to=3-1]
		\arrow[from=3-1, to=3-2]
		\arrow["{{{f^{\otimes p}\otimes g^{\otimes p}}}}"', from=3-2, to=3-4]
		\arrow["{{{\mu_p}}}"', from=3-4, to=3-6]
		\arrow["{{{\mu_2}}}"', from=3-6, to=2-6]
	\end{tikzcd}\]
	Here we use that $E$ is normed, hence commutative, for the commutativity of the right pentagon.
	
	Applying extended powers to this diagram and using the the oplax monoidality of $\mathbb{D}_{G}$, see \cite[Proposition 6.12]{MotivicColimits}, we obtain a commutative diagram
	\[\begin{tikzcd}
		& {\mathbb{D}_G(\Sigma^\infty_+X\otimes\Sigma^\infty_+X)} && {\mathbb{D}_G(\Sigma^aE\otimes\Sigma^b E)} && {\mathbb{D}_G(\Sigma^{a+b}E)} \\
		{\Sigma^\infty_+X\otimes\Sigma^\infty_+\Bet{C_p}} &&&&& {\Sigma^{(a+b)p}E} \\
		& {\mathbb{D}_G(\Sigma^\infty_+X)\otimes \mathbb{D}_G(\Sigma^\infty_+X)} && {\mathbb{D}_G(\Sigma^a E)\otimes \mathbb{D}_G(\Sigma^b E)} && {\Sigma^{ap}E\otimes \Sigma^{bp}E}
		\arrow["{{{\mathbb{D}_G(f\otimes g)}}}", from=1-2, to=1-4]
		\arrow[from=1-2, to=3-2]
		\arrow["{{\mathbb{D}_G(\mu_2\otimes\mu_2)}}", from=1-4, to=1-6]
		\arrow[from=1-4, to=3-4]
		\arrow["{{{\mathbb{D}_G\mu_p}}}", from=1-6, to=2-6]
		\arrow[from=2-1, to=1-2]
		\arrow[from=2-1, to=3-2]
		\arrow["{{{\mathbb{D}_Gf\otimes \mathbb{D}_Gg}}}"', from=3-2, to=3-4]
		\arrow["{{\mathbb{D}_G\mu_p}}"', from=3-4, to=3-6]
		\arrow["{{{\mathbb{D}_G\mu_2}}}"', from=3-6, to=2-6]
	\end{tikzcd}\]
	The upper composition represents the object $P_{G}(fg)$, while the lower represents $P_{G}(f)P_{G}(g)$. Commutativity of the diagram then shows that the two expression are equal, that is $P_{G}(fg)=P_{C_p}(f)P_{C_p}(g)$, that is, $P_{G}$ is multiplicative.
\end{remark}

\subsection{The normed slice category}
We also need the technical construction of the slice category over Weibel's homotopy $K$-theory space $\Spc{-}_{/K^{\mathrm{hom}}}$ as a normed category.
\begin{construction}
	By \cite[Proposition 15.26]{NormsBachmannHoyois} we know that the $K$-theory presheaf $K:\mathrm{Sch}^{\mathrm{op}}\to\mathrm{CAlg}(\An)$ can be extended to a functor
	\begin{equation*}
		K:\Span(\mathrm{Sch},\mathrm{all},\mathrm{f\Acute{e}t})\to \mathrm{CAlg}(\An).
	\end{equation*}
	By \cite[Remark 6.3]{NormsBachmannHoyois} $\Spc{-}$, is a normed category. Let $p:\int \Spc{-}\to \Span(\mathrm{Sch},\mathrm{all},\mathrm{f\acute{e}t})$ be the cocartesian fibration classifying the functor $\Spc{-}$ and let $K^{\mathrm{hom}}:=\Omega^\infty KH\simeq L_{\mathrm{cdh}}K$ be the homotopy $K$-theory space, where the equivalence is \cite[Remark 16.11]{NormsBachmannHoyois}. By \cite[Theorem 15.22 and Corollary 15.23]{NormsBachmannHoyois}, $K^{\mathrm{hom}}$ is a section of $p$. We obtain the slice category $\Spc{-}_{/K^{\mathrm{hom}}}$ as the pullback
	\[\begin{tikzcd}
		{\int\Spc{-}_{/K^{\mathrm{hom}}}} & {\mathrm{Fun}(\Delta^1,\int\Spc{-})} \\
		{\Span(\mathrm{Sch},\mathrm{all},\mathrm{f\acute{e}t})} & {\int\Spc{-}}.
		\arrow[from=1-1, to=1-2]
		\arrow["q"', from=1-1, to=2-1]
		\arrow["\lrcorner"{anchor=center, pos=0.125}, draw=none, from=1-1, to=2-2]
		\arrow["\mathrm{ev_1}", from=1-2, to=2-2]
		\arrow["K^{\mathrm{hom}}", from=2-1, to=2-2]
	\end{tikzcd}\]
	As $\mathrm{ev}_1$ is cocartesian, see \cite[Theorem 5.9.3]{cisformal}, so is its pullback $q$. Hence the straightening $\Spc{-}_{/K^{\mathrm{hom}}}:\Span(\mathrm{Sch},\mathrm{all},\mathrm{f\acute{e}t})\to \Cat$ of $\int\Spc{-}_{/K^{\mathrm{hom}}}$ is a normed category, see also \cite[Theorem 2.2.2.4]{lurieha} for this procedure for general slice operads. By \cite[Remark 2.2.2.5]{lurieha} this is sectionwise the slice category $S\mapsto \Spc{S}_{/K^{\mathrm{hom}}|_S}$.     
\end{construction}

\begin{remark}
	To see that the straightening $\int\Spc{-}_{/K^{\mathrm{hom}}}$ is section wise the slice category, at least heuristically on objects, consider the description of objects in a pullback of $\infty$-categories. In our case, an object in $\int\Spc{-}_{/K^{\mathrm{hom}}}$ is a scheme $S$ together with a morphism in $\int\Spc{-}$, such that the target of the morphism is equivalent to the object $(S,K^{\mathrm{hom}}|_S)$. Phrased differently, an object in $\int\Spc{-}_{/K^{\mathrm{hom}}}$ consists of a scheme $S$ and a morphism $X\to K^{\mathrm{hom}}|_S\in \Spc{S}$, where $X\in\Spc(S)$ is a pointed motivic space over $S$. Thus, $\Spc{S}_{/K^{\mathrm{hom}}|_S}$, which is the fiber of $q$ over $S$, has the desired description. 
\end{remark}

\subsection{Motivic Landweber-Novikov operations}

Recall that an orientation of a homotopy commutative ring spectrum $E\in\CAlg(h\SH{S})$ is given by a map of homotopy commutative rings $\MGL\to E\in\CAlg(h\SH{S})$. It is well known, that such an orientation unlocks a theory of Chern classes for $E$, see for example \cite{deglise2024characteristic}. Using the theory of Chern classes, together with the Thom isomorphism \cite[Proposition 16.28]{NormsBachmannHoyois}, we can define a motivic analog of the so-called Landweber-Novikov operations, see \cite[Section I.4 and I.5]{adams1974stable}, .

To define the Landweber-Novikov operations, let $E$ be an oriented ring spectrum with orientation defined by a morphism $\varphi:\MGL\to E\in\CAlg(h\SH{S})$. By \cite[Proposition 6.2]{naumann2009motiviclandweberexactness} we obtain an equivalence
\begin{align*}
	E^{\ast,\ast}(\Bet{GL}_d)&\cong E^{\ast,\ast}[[c_1,\ldots, c_d]],\\
	E^{\ast,\ast}(\Bet{GL})&\cong E^{\ast,\ast}[[c_1,\ldots]]
\end{align*}
where $\Bet{\mathrm{GL}}=\colim{d}{\Bet{\mathrm{GL}_d}}$ and the $c_i$ are the universal $E$-Chern classes. If $\varphi=\mathrm{id}:\MGL\to\MGL$, we call the $c_i$ the \emph{motivic Conner-Floyd-Chern classes}.

Just as in classical topology, one can furthermore show that
\begin{equation*}
	E^{\ast,\ast}(\Bet{GL})\hookrightarrow (E^{\ast,\ast}(\Bet{\mathbb{G}_m}))^{\times\infty}\simeq E^{\ast,\ast}[[x_1,x_s,\ldots]],
\end{equation*}
where the image can be identified with the subring of symmetric polynomials.
\begin{definition}
	\label{defn: Landwebernovikovoperationsmotivically}
	Let $\alpha=(\alpha_1,\alpha_2,\ldots)$ be a non-negative multiindex with only finitely many non-zero entries. Let $c_\alpha\in E^{\ast,\ast}[[x_1,x_2,\ldots]]$ denote the symmetric power series whose monomials have exactly $\alpha_j$ many variables of exponent $j$. We define the \emph{$\alpha$ Landweber-Novikov operation with respect to $\varphi$}, $s_\alpha\in E^{\ast,\ast}\MGL$ as the image of $c_\alpha\in E^{\ast,\ast}(\Bet{GL})$ under the $E^{\ast,\ast}(X)$-module Thom isomorphism.
	
	If $\varphi = \mathrm{id}$, we call the $s_\alpha$ simply the Landweber-Novikov operations.
\end{definition}
\begin{example}
	We can recover the motivic Conner-Floyd-Chern classes from the $c_\alpha$ by taking the multiindices $\alpha=(n,0,0,\ldots)$. Indeed, in this case, $c_\alpha$ is the power series
	\begin{equation*}
		c_{(n,0,0,\ldots)}=\underset{|S|=n}{\sum_{S\subseteq \N}} \prod_{j\in S}x_j.
	\end{equation*}
	As another example, consider $\alpha=(1,0,2,0,0\ldots)$. Then
	\begin{equation*}
		c_\alpha = \sum_{i}\sum_{j\neq i}\underset{k>j}{\sum_{k\neq i}}x_ix_j^3x_k^3.
	\end{equation*}
	See also \cite[Definition 2.5.1]{FGBO}.
\end{example}
As the Landweber-Novikov operations are just the images of Chern classes under a module homomorphism, all the properties of Chern classes translate into those of the Landweber Novikov operations.
\begin{proposition}
	Let $\varphi=\mathrm{id}:\MGL\to\MGL$. Let $\xi,\eta$ be two rank $n$ vector bundles over a scheme $X$, and let $x,y\in\MGL^{\ast,\ast}(X)$ be two cohomology classes. Then the motivic Conner-Floyd-Chern classes and the motivic Landweber-Novikov operations satisfy the following properties:
	\begin{itemize}
		\item[1)] $c_0(\xi)=1\in\MGL^{0,0}(X)$, and $s_0=\mathrm{id}$ is the identity.
		\item[2)] Let $g:Y\to X\in\mathrm{Sch}$ be a morphism. Then $g^\ast c_\alpha(\xi)=c_\alpha(g^{\ast}\xi)$ and $g^\ast s_\alpha = s_\alpha g^\ast$.
		\item[3)] The Landweber-Novikov operations are stable and hence additive.
		\item[4)] Define the total Chern class
		\begin{equation*}
			c_{\mathbf{t}}(\xi):=\sum_\alpha c_\alpha(\xi) t_1^{\alpha_1}t_2^{\alpha_2}\ldots \in\MGL^{\ast,\ast}(X)[[t_1,t_2,\ldots]]
		\end{equation*}
		and the total Landweber-Novikov operation
		\begin{equation*}
			s_{\mathbf{t}}(x):=\sum_\alpha s_\alpha(x) t_1^{\alpha_1}t_2^{\alpha_2}\ldots \in\MGL^{\ast,\ast}(X)[[t_1,t_2,\ldots]],
		\end{equation*}
		where $\alpha$ ranges over all non-negative multi-indices with only finitely many non-zero entries.
		
		Then the Cartan formula holds:
		\begin{align*}
			c_{\mathbf{t}}(\xi\oplus\eta)&=c_{\mathbf{t}}(\xi)c_{\mathbf{t}}(\eta),\\
			s_{\mathbf{t}}(xy)&=s_{\mathbf{t}}(x)s_{\mathbf{t}}(y).
		\end{align*}
		\item[5)] The Conner-Floyd-Chern classes and Landweber-Novikov operations are related by the Thom isomorphism $\varphi:\MGL^{\ast,\ast}(X)\xrightarrow{\cong} \MGL^{\ast,\ast}(\mathrm{Th}_X(\xi))$
		\begin{equation*}
			\varphi(c_\alpha(\xi))=s_\alpha(\varphi(1)).
		\end{equation*}
	\end{itemize}
\end{proposition}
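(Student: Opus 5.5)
The plan is to deduce every property from three facts: the splitting principle, the projective bundle formula underlying \cite[Proposition 6.2]{naumann2009motiviclandweberexactness}, and the multiplicativity of the Thom isomorphism for Thom spectra of direct sums. First I make the classes explicit. For a rank $n$ bundle $\xi$ on $X$, classified (after Zariski-local trivialisation, in $\Spc{S}$) by $X\to\Bet{GL}_n$, set $c_\alpha(\xi)$ to be the pullback of the universal $c_\alpha$. By the splitting principle the pullback $\MGL^{\ast,\ast}(X)\to\MGL^{\ast,\ast}(\mathrm{Fl}(\xi))$ to the flag bundle is injective, and there $\xi$ acquires a filtration with line bundle quotients $L_1,\dots,L_n$. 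Hence $c_\alpha(\xi)$ is the symmetric monomial sum $c_\alpha(x_1,\dots,x_n)$ with $x_i=c_1(L_i)$, and it vanishes if $\sum\alpha_j>n$.

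With this description, 1) is immediate for the Chern classes: the only monomial with $\alpha=0$ is $1$. Likewise 2) for Chern classes is naturality of pullback of classifying maps. For 4), the generating function factors as $c_{\mathbf t}(\xi)=\prod_i\bigl(1+\sum_{j\ge1}x_i^jt_j\bigr)$, since expanding the product picks out exactly the monomials with $\alpha_j$ variables of exponent $j$. After pulling back to a common flag bundle, the Chern roots of $\xi\oplus\eta$ are the union of those of $\xi$ and $\eta$, so the product splits and the Whitney formula follows. Injectivity of the pullback then descends the identity to $X$.

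For the Landweber--Novikov operations, \Cref{defn: Landwebernovikovoperationsmotivically} gives $s_\alpha\in\MGL^{\ast,\ast}\MGL$, obtained by transporting $c_\alpha$ through the Thom isomorphism $\MGL^{\ast,\ast}(\Bet{GL})\cong\MGL^{\ast,\ast}(\MGL)$. Since $s_\alpha$ is a map of spectra $\MGL\to\Sigma^{\ast,\ast}\MGL$, the induced cohomology operation is stable, which gives 3); additivity follows from stability. Naturality in 2) is automatic for any operation given by postcomposition, because $g^\ast$ is precomposition. For 1), $c_0=1$ corresponds under the Thom isomorphism to the Thom class, which is the unit map $\mathrm{id}\in\MGL^{0,0}\MGL$. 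Property 5) is essentially the definition. The class $\varphi(1)$ is the Thom class of $\xi$, pulled back via the map $\mathrm{Th}_X(\xi)\to\MGL_n\to\Sigma^{2n,n}\MGL$. By naturality of the Thom isomorphism with respect to this classifying map, the equality $s_\alpha(\varphi(1))=\varphi(c_\alpha(\xi))$ reduces to the universal case, which is the defining identity.

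The main obstacle is the Cartan formula for $s_{\mathbf t}$. The intended route is to show that $s_{\mathbf t}\colon\MGL\to\MGL[[t]]$ is a map of homotopy ring spectra. Multiplication on $\MGL$ is induced by the Whitney sum $\Bet{GL}_m\times\Bet{GL}_n\to\Bet{GL}_{m+n}$ on Thom spectra, and the Thom class is multiplicative, $u_{\xi\oplus\eta}=u_\xi\cdot u_\eta$. Therefore the total Chern class is taken under the Thom isomorphism to $s_{\mathbf t}$ composed with multiplication. The multiplicativity of $c_{\mathbf t}$ already proven then yields $s_{\mathbf t}\circ\mu=\mu\circ(s_{\mathbf t}\wedge s_{\mathbf t})$ on $\MGL\wedge\MGL$. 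Evaluating this on an external product $x\times y$ and pulling back along the diagonal gives $s_{\mathbf t}(xy)=s_{\mathbf t}(x)s_{\mathbf t}(y)$.

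The delicate points are that $\MGL\wedge\MGL$ satisfies a Künneth formula, so that $\MGL^{\ast,\ast}(\MGL\wedge\MGL)$ is detected on $\Bet{GL}\times\Bet{GL}$ via the Thom isomorphism, and that the colimit over $n$ introduces no $\lim^1$ term. I would handle both using the cellularity of $\Bet{GL}_n$ and the Mittag-Leffler property of the surjective restriction maps in \cite[Proposition 6.2]{naumann2009motiviclandweberexactness}.
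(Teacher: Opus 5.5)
Your proposal is correct and follows the same route as the paper. The paper's proof only says that, given D\'eglise's theory of Chern classes, Adams' topological argument (Sections I.4--I.5) goes through verbatim, and you have written out exactly that argument: the splitting principle and monomial symmetric functions for the $c_\alpha$, the Thom isomorphism for 1), 3) and 5), and the Whitney-sum description of the multiplication on $\MGL$ to deduce the Cartan formula for $s_{\mathbf t}$ from the one for $c_{\mathbf t}$.

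One small refinement to the finiteness issues you flag. The Thom isomorphism for $\MGL\wedge\MGL$ over $\Bet{GL}\times\Bet{GL}$ needs no K\"unneth input, since it holds stagewise and passes to the limit. The K\"unneth/Mittag-Leffler argument is needed only to verify the universal identity $\oplus^\ast c_{\mathbf t}=c_{\mathbf t}\times c_{\mathbf t}$ in $\MGL^{\ast,\ast}(\Bet{GL}\times\Bet{GL})$ by reduction to finite stages.
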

\begin{proof}
	Using the properties of Chern-classes from \cite{degliseorientation}, the proof is verbatim the same as in topology, see \cite[Section I.4 and I.5]{adams1974stable}.
\end{proof}

\subsection{\texorpdfstring{Properties of $\MGL$ and oriented spectra}{Properties of MGL and oriented spectra}}
We end the section with a short summary of properties of $\MGL$.
\begin{theorem}
	\label{MotCohomDiag}
	There is an isomorphism of graded rings
	\begin{equation}
		\underline{\pi}_{0}(\HZ)_\ast(k)\cong H^{-\ast,-\ast}\left(k,\mathbb{Z}\right)\cong K^{\mathrm{M}}_{-\ast}\!(k).
	\end{equation}
	If $l\in\mathbb{N}$ there is an isomorphism of graded rings
	\begin{equation}
		\underline{\pi}_{0}(\HZ/l)_\ast(k)\cong H^{-\ast,-\ast}\left(k,\mathbb{Z}/l\right)\cong K^{\mathrm{M}}_{-\ast}\!(k)/l.
	\end{equation}
\end{theorem}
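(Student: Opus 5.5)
The first isomorphism in each line is a matter of unwinding definitions; the second is the Nesterenko--Suslin--Totaro theorem. For the first, recall that for $E\in\SH{k}$ the homotopy sheaf is $\underline{\pi}_{s}(E)_w(k)=[\Sigma^{s,0}\Sigma^{0,w}\mathbb{G}_m^{\wedge}\ldots]$; more precisely, with the convention $\underline{\pi}_s(E)_w(k)=[\Sigma^{s}\mathbb{G}_m^{\wedge w}\sus \Spec k, E]=[\Sph^{s+w,w},E]$, we get
\begin{equation*}
\underline{\pi}_{0}(\HZ)_{w}(k)=[\Sph^{w,w},\HZ]=\HZ^{-w,-w}(k)=H^{-w,-w}(k,\Z).
\end{equation*}
Here we use that $\HZ$, Voevodsky's motivic cohomology spectrum, represents motivic cohomology on smooth $k$-schemes (for instance via the comparison with Bloch's higher Chow groups, or directly by definition as the spectrum built from $\Z(n)[2n]$). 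The ring structure on $\underline{\pi}_0(\HZ)_\ast(k)$ comes from the multiplication $\HZ\otimes\HZ\to\HZ$, and this is exactly the cup product on motivic cohomology. So the first map is a graded ring isomorphism. I would take care of the sign and indexing conventions at this point, so that the degree $\ast$ piece matches $H^{-\ast,-\ast}$.

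For the second isomorphism I would invoke the theorem of Nesterenko--Suslin and Totaro. It gives a natural ring isomorphism $\KM{n}{k}\cong H^{n,n}(k,\Z)$ for $n\geq 0$, sending a symbol $\{a_1,\ldots,a_n\}$ to the cup product of the classes $[a_i]\in H^{1,1}(k,\Z)\cong k^\times$. This is compatible with products because symbols multiply to symbols. For negative $-\ast$ both sides vanish: $H^{n,n}(k,\Z)=0$ for $n<0$, since the motivic complex $\Z(n)$ vanishes for negative weights.

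For the mod $l$ statement, I would use the cofiber sequence $\HZ\xrightarrow{l}\HZ\to\HZ/l$. Its long exact sequence gives
\begin{equation*}
0\to H^{n,n}(k,\Z)/l\to H^{n,n}(k,\Z/l)\to H^{n+1,n}(k,\Z)[l]\to 0 .
\end{equation*}
The term $H^{n+1,n}(k,\Z)$ vanishes: a field has no motivic cohomology with $p>q$ (in Bloch's terms, this is $\mathrm{CH}^n(k,n-1-n)$ in negative simplicial degree, hence zero; equivalently, $\Z(n)$ is concentrated in cohomological degrees $\leq n$ over a field). Hence $H^{n,n}(k,\Z/l)\cong\KM{n}{k}/l$. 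The ring structure follows because $\HZ\to\HZ/l$ is a ring map. The identification $\underline{\pi}_0(\HZ/l)_\ast(k)\cong H^{-\ast,-\ast}(k,\Z/l)$ is then the same unwinding as in the integral case.

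The main obstacle is not a new argument but citing correctly. The real input is the Nesterenko--Suslin--Totaro theorem together with the vanishing $H^{p,q}(k)=0$ for $p>q$. Both are deep external results, so the proof reduces to references plus bookkeeping of bidegrees. If exponential characteristic issues arise, they do not affect $(n,n)$-degrees, since the theorem holds over any field.
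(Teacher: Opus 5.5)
Your proposal is correct and is essentially the paper's proof. Nesterenko--Suslin--Totaro (Theorem 5.1 of Mazza--Voevodsky--Weibel, which the paper cites) gives the integral statement. The mod $l$ statement comes from the long exact sequence of $\HZ\xrightarrow{l}\HZ\to\HZ/l$ together with the vanishing $H^{n+1,n}(k,\Z)=\underline{\pi}_{-1}(\HZ)_{-n}(k)=0$, which is exactly the connectivity of $\HZ$ the paper invokes. The paper additionally makes explicit that sections at $\Spec k$ agree with the naive groups because a field is henselian local.

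One slip in your justification: in Bloch's terms $H^{n+1,n}(k,\Z)\cong\mathrm{CH}^n(k,n-1)$, not $\mathrm{CH}^n(k,-1)$. It vanishes because $\Delta^{n-1}_k$ carries no cycles of codimension $n$, not because of a negative simplicial degree. Your alternative reason, that $\Z(n)$ is concentrated in cohomological degrees $\leq n$ over a field, is correct.
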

\begin{proof}
	The first claim is \cite[Theorem 5.1]{mazweibvoe} while the second claim follows from the long exact sequence on global sections of the homotopy sheaves induced by the exact sequence $\HZ\xrightarrow{l}\HZ\to\HZ/l$ together with the fact that $\HZ$ is connective, see \cite[Lemma 7.3]{hoyois2017six}. We furthermore used that every field is in particular a henselian local ring, hence taking global sections is the same as talking stalks at $k$, and that by \cite[\href{https://stacks.math.columbia.edu/tag/04EN}{Tag 04EN}]{stacks-project} the stalk functor is exact.
\end{proof}
For a positive integer $l$, we denote by $\mu_l$ the sheaf of $l$-th roots of unity.
\begin{theorem}[Norm-Residue-Isomoprhism]
	\label{Thm: Norm-Residue-Iso}
	Let $l\in \mathbb{N}$ be coprime to the exponential characteristic of $k$. There is a natural isomorphism
	\begin{equation}
		H^{p,q}\left(k,\mathbb{Z}/l\right)\cong H_{\mathrm{\text{ét}}}^p(k,\mu_l^{\otimes q})
	\end{equation}
	for $p\leq q$.
\end{theorem}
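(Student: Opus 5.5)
This statement is the Norm Residue Isomorphism Theorem (the Bloch--Kato conjecture proved by Voevodsky and Rost). It cannot be derived from the earlier material of this paper, so the plan is to reduce it to the standard inputs from the literature and cite them. The strategy follows the Beilinson--Lichtenbaum formulation. First, compare motivic cohomology with $\mathbb{Z}/l$ coefficients to étale cohomology. Since $l$ is invertible on $k$, the étale sheafification of the motivic complex $\mathbb{Z}/l(q)$ is quasi-isomorphic to $\mu_l^{\otimes q}$; this is Suslin--Voevodsky rigidity. This yields a natural change-of-topology map
\begin{equation*}
	H^{p,q}(k,\mathbb{Z}/l)\to H^{p}_{\text{ét}}(k,\mu_l^{\otimes q}),
\end{equation*}
and the claim is that it is an isomorphism for $p\leq q$.

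Next, reduce to $l$ a prime power, and then to $l$ a prime. Writing $l=\prod \ell_i^{e_i}$, both sides split as direct sums over the primary parts by the Chinese remainder theorem, compatibly with the comparison map. For $l=\ell^{e}$, use the short exact sequences $0\to\mathbb{Z}/\ell^{e-1}\to\mathbb{Z}/\ell^e\to\mathbb{Z}/\ell\to 0$ on both sides, which give compatible long exact sequences. Induction on $e$ together with the five lemma then reduces to $e=1$. The boundary degree $p=q$ needs some care: there one uses that $H^{q+1,q}$ maps injectively into $H^{q+1}_{\text{ét}}$, which is part of the Beilinson--Lichtenbaum package.

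For $l=\ell$ prime, the standard argument, due to Suslin--Voevodsky and Geisser--Levine, shows that the full range $p\leq q$ follows from surjectivity in the top degree. Concretely, it suffices that the norm residue (Galois symbol) map
\begin{equation*}
	K^{\mathrm{M}}_q(k)/\ell\cong H^{q,q}(k,\mathbb{Z}/\ell)\to H^q_{\text{ét}}(k,\mu_\ell^{\otimes q})
\end{equation*}
is an isomorphism for all fields of characteristic different from $\ell$. The identification on the left is \Cref{MotCohomDiag}. This step also uses Hilbert 90 for $K^{\mathrm{M}}_{q}$ to control the lower degrees, via the weight truncation of $R\epsilon_\ast\mu_\ell^{\otimes q}$.

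The main obstacle is exactly this Bloch--Kato isomorphism. Its proof is due to Voevodsky (for $\ell=2$, the Milnor conjecture) and to Voevodsky and Rost in general, with the norm-variety and symmetric-power constructions. I would not reprove it. I would cite Voevodsky's \emph{On motivic cohomology with $\mathbb{Z}/l$-coefficients} together with the reductions above, equivalently the textbook account of Haesemeyer--Weibel. The proof in the paper should therefore consist of precise citations plus the short reductions.
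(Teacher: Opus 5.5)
Your proposal is correct and takes the same approach as the paper, whose proof is a single citation of Voevodsky's theorem \cite[Theorem 6.1]{voevodsky2011motivic}; your two reductions (from general $l$ to a prime, and from the top-degree Bloch--Kato isomorphism to the full range $p\leq q$ via Suslin--Voevodsky and Geisser--Levine) just spell out what that citation packages. One small simplification: at the boundary $p=q$ in your five-lemma step, the fifth map is trivially injective because $H^{q+1,q}(k,\mathbb{Z}/\ell^{e-1})=0$ for a field, so no Beilinson--Lichtenbaum input is needed there.
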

\begin{proof}
	This is a deep theorem by Voevodsky, see \cite[Theorem 6.1]{voevodsky2011motivic}.
\end{proof}
We obtain an immediate corollary:
\begin{corollary}
	\label{Norm-residue-isomorphism}
	Let $k$ be a field containing a primitive $l$-th root of unity. There is a non-canonical isomorphism of bigraded rings\footnote{Leaving out the bigradings for notational simplicity.}
	\begin{equation*}
		 \left(K^{\mathrm{M}}_{\ast}\!(k)/l\right)\left[\tau\right]\xrightarrow{\cong}\underline{\pi}_{\ast}(\HZ/l)_{\ast}(k)\cong H^{\ast,\ast}\left(k,\mathbb{Z}/l\right)
	\end{equation*}
	where $\tau$ is mapped to a choice $\zeta$ of a primitive $l$-th root of unity in $H^{0,1}\left(k,\mathbb{Z}/l\right)\simeq H_{\mathrm{\text{ét}}}^0(k,\mu_l)=\mu_l(k)$ .
\end{corollary}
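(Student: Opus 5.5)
We construct the map explicitly, show it is well defined, and then check bijectivity one bidegree at a time using \Cref{Thm: Norm-Residue-Iso} and \Cref{MotCohomDiag}.

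\textbf{Defining the map.} Fix a primitive $l$-th root of unity $\zeta\in k$. The isomorphism of \Cref{Thm: Norm-Residue-Iso} in bidegree $(0,1)$ identifies $H^{0,1}(k,\mathbb{Z}/l)$ with $H^0_{\text{ét}}(k,\mu_l)=\mu_l(k)$. Let $\tau$ be the class corresponding to $\zeta$. The weight-zero part of the degree-diagonal gives a ring map $K^{\mathrm{M}}_\ast(k)/l\to\bigoplus_n H^{n,n}(k,\mathbb{Z}/l)$; this is the isomorphism of \Cref{MotCohomDiag} (with the grading reindexed). Since $H^{\ast,\ast}(k,\mathbb{Z}/l)$ is graded-commutative and $\tau$ has cohomological degree $0$, $\tau$ is central. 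Hence sending the polynomial variable to $\tau$ extends the diagonal map to a ring map
\begin{equation*}
	\left(K^{\mathrm{M}}_\ast(k)/l\right)[\tau]\to H^{\ast,\ast}(k,\mathbb{Z}/l).
\end{equation*}
A monomial $a\tau^j$ with $a\in K^{\mathrm{M}}_n(k)/l$ lands in bidegree $(n,n+j)$.

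\textbf{Bijectivity bidegree by bidegree.} Fix $(p,q)$.
\begin{itemize}
	\item If $p>q$, then $H^{p,q}(k,\mathbb{Z}/l)=0$ by the standard vanishing for motivic cohomology of fields (e.g.\ from connectivity of $\HZ$ and dimension reasons). The source is also zero in this bidegree.
	\item If $p\le q$, apply \Cref{Thm: Norm-Residue-Iso} in both bidegrees $(p,p)$ and $(p,q)$. Because $\mu_l$ is constant over $k$, $\zeta$ trivializes $\mu_l\cong\mathbb{Z}/l$, so $\mu_l^{\otimes q}\cong\mu_l^{\otimes p}\otimes\mu_l^{\otimes(q-p)}\cong\mu_l^{\otimes p}$. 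Cup product with $\zeta^{\otimes(q-p)}\in H^0_{\text{ét}}(k,\mu_l^{\otimes(q-p)})$ therefore gives an isomorphism $H^p_{\text{ét}}(k,\mu_l^{\otimes p})\to H^p_{\text{ét}}(k,\mu_l^{\otimes q})$.
\end{itemize}
The norm residue isomorphism is compatible with products, since it is induced by the change-of-topology map and is multiplicative. So multiplication by $\tau^{q-p}$ is an isomorphism $H^{p,p}(k,\mathbb{Z}/l)\to H^{p,q}(k,\mathbb{Z}/l)$. Combined with $H^{p,p}(k,\mathbb{Z}/l)\cong K^{\mathrm{M}}_p(k)/l$ from \Cref{MotCohomDiag}, the map sends the summand $(K^{\mathrm{M}}_p(k)/l)\cdot\tau^{q-p}$ of the source isomorphically onto $H^{p,q}(k,\mathbb{Z}/l)$. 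These summands exhaust the source in bidegree $(p,q)$, which proves bijectivity.

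\textbf{Where the care goes.} There are two points to pin down. The first is the vanishing for $p>q$, which I would cite rather than reprove. The second is the multiplicativity of the norm residue map together with the statement that $\tau$ corresponds to $\zeta$ under it. This second point is also why the isomorphism is non-canonical: it depends on the choice of $\zeta$.
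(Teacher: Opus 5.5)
Your proposal is correct and follows essentially the same route as the paper. Both arguments build the ring map from the diagonal isomorphism of \Cref{MotCohomDiag} and the class $\tau$ of $\zeta$, then check bidegree by bidegree: vanishing for $p>q$, the diagonal by construction, and for $p<q$ the norm residue isomorphism combined with the twist by the root of unity identifying $\mu_l^{\otimes p}$ with $\mu_l^{\otimes q}$. You are in fact slightly more careful than the paper: you state explicitly that the comparison map is multiplicative, so that multiplication by $\tau^{q-p}$ really corresponds to cup product with $\zeta^{\otimes(q-p)}$, and you use the correct source $\mu_l^{\otimes p}$ for the twisting isomorphism where the paper writes $\mu_l$.
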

\begin{proof}\
	After fixing a choice of a $l$-th root of unity, the second isomorphism of \Cref{MotCohomDiag} induces a canonical morphism of bigraded rings
	\begin{equation*}
		\left(K^{\mathrm{M}}_{\ast}\!(k)/l\right)\left[\tau\right]\to H^{\ast,\ast}\left(k,\mathbb{Z}/l\right).
	\end{equation*}
	Hence it suffices to check that this induces an isomorphism degreewise, that is, that\footnote{The left hand side denotes the $(p,q)$-graded part of Milnor $K$-theory adjoined variable $\tau$.} $((K^{\mathrm{M}}_{\ast}\!(k)/l)\left[\tau\right])^{(p,q)}\xrightarrow{\cong} H^{p,q}\left(k,\mathbb{Z}/l\right)$ for all $p,q\in\mathbb{Z}$. Both sides vanish for $p>q$, and for $p=q$ this is true by construction. The claim now follows from \Cref{Thm: Norm-Residue-Iso}. Indeed, as $k$ is a field, there are isomorphisms
	\begin{equation*}
		H_{\mathrm{\text{ét}}}^p(k,\mu_l^{\otimes q})\cong H_{\mathrm{Gal}}^p(\bar{k}/k,\mu_l^{\otimes q})\xleftarrow{\cong}H_{\mathrm{Gal}}^p(\bar{k}/k,\mu_l).
	\end{equation*}
	Here the second isomorphism is given by multiplication with an primitive $l$-th root of unity, that is, the image of $\tau$ see also \cite[(7.1)]{KyllingRondigsOstvaer2020}.
\end{proof}
The norm-residue theorem has the following corollary
\begin{corollary}
	\label{cor: homotopygroupsofZpcompleted}
	Let $k$ be a field containing a compatible system of primitive $p$-th power roots of unity. Then there exists a isomorphism of bigraded rings
	\begin{equation}
		\label{Normresidue2}
		\pi_{\ast}\left(\HZp\right)_{\ast}\cong \KM{\ast}{k}^\wedge_p\left[\tau\right].
	\end{equation}
	Here $\tau\in \pi_{1}\left(\HZp\right)_1$ is a so-called \emph{Tate twist element}.
\end{corollary}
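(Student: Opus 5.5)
The plan is to derive the statement from \Cref{Norm-residue-isomorphism} by passing to the limit over $l=p^m$. Write $\HZp\simeq\lim_m \HZ/p^m$; this is the definition of $p$-completion for $\HZ$, since $\HZ/p^m\simeq\HZ\otimes\Sph/p^m$. For each bidegree the Milnor sequence gives
\begin{equation*}
	0\to {\lim_m}^1\,\pi_{s+1}(\HZ/p^m)_w(k)\to \pi_s(\HZp)_w(k)\to \lim_m \pi_s(\HZ/p^m)_w(k)\to 0 .
\end{equation*}
So I first compute each $\pi_\ast(\HZ/p^m)_\ast(k)$ compatibly in $m$. Then I show that the $\lim^1$-terms vanish. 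Finally I identify the limit with $\KM{\ast}{k}^\wedge_p[\tau]$.

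For the first step I choose a compatible system $(\zeta_{p^m})_m$ with $\zeta_{p^{m+1}}^p=\zeta_{p^m}$. \Cref{Norm-residue-isomorphism} then gives isomorphisms $(\KM{\ast}{k}/p^m)[\tau]\cong\pi_\ast(\HZ/p^m)_\ast(k)$ sending $\tau\mapsto\zeta_{p^m}$. I must check that these are compatible with the reduction maps $\HZ/p^{m+1}\to\HZ/p^m$. On the Milnor $K$-theory part this holds by naturality of \Cref{MotCohomDiag}, because the reduction $\HZ\to\HZ/l$ induces the reduction $K^M/p^{m+1}\to K^M/p^m$. For $\tau$, the map $\mu_{p^{m+1}}(k)\cong H^{0,1}(k,\Z/p^{m+1})\to H^{0,1}(k,\Z/p^m)\cong\mu_{p^m}(k)$ is induced by the reduction of coefficients $\Z/p^{m+1}\to\Z/p^m$. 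Under the étale identification $\Z/p^m(1)\simeq\mu_{p^m}$, this reduction corresponds to the $p$-th power map $\mu_{p^{m+1}}\to\mu_{p^m}$. This is exactly why a compatible system is needed: it makes $\zeta_{p^{m+1}}\mapsto\zeta_{p^m}$. Since both sides are ring maps and the generators match, the isomorphisms form a map of inverse systems of bigraded rings. I expect this compatibility check to be the main obstacle. To settle it I would argue via the Kummer sequence and the identification $H^{0,1}(k,\Z/l)\cong H^0_{\text{ét}}(k,\mu_l)$ in \Cref{Thm: Norm-Residue-Iso}, which is natural in $l$ for the maps $\mu_{l'}\to\mu_l$, $x\mapsto x^{l'/l}$.

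For the $\lim^1$-terms, fix a bidegree. The group in degree $(s,w)$ is $\KM{w-s}{k}/p^m\cdot\tau^{s}$ if $0\le s\le w$ (up to the paper's indexing conventions), and zero otherwise. The transition maps are the surjections $\KM{n}{k}/p^{m+1}\to\KM{n}{k}/p^m$, because $\tau^s$ maps to $\tau^s$. A system with surjective transition maps is Mittag-Leffler, so $\lim^1=0$. Hence $\pi_s(\HZp)_w(k)\cong\lim_m\KM{n}{k}/p^m\cdot\tau^s=\KM{n}{k}^\wedge_p\tau^s$. Multiplicativity holds because the limit of ring maps is a ring map; here the product on $\pi_{\ast}(\HZp)_\ast$ is compatible with the $\HZ/p^m$ ones through the $\mathbb{E}_\infty$ structure on $\HZp\simeq\lim\HZ/p^m$ in $\HZ$-modules. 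This gives the isomorphism of bigraded rings in the statement. Here $\tau$ is the element of $\pi_1(\HZp)_1$ given by the compatible system $(\zeta_{p^m})_m$, and the bidegree is fixed by the convention of \Cref{Norm-residue-isomorphism}.
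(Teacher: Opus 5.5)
Your proposal is correct and is essentially the paper's argument: the Milnor sequence for $\HZp\simeq\lim_m\HZ/p^m$, the norm-residue identification of the tower $\{\pi_\ast(\HZ/p^m)_\ast\}_m$ with $\{(\KM{\ast}{k}/p^m)[\tau]\}_m$, and vanishing of $\lim^1$ by Mittag-Leffler because the transition maps are surjective. You go a bit further than the paper in two places: you check that reduction of coefficients corresponds to the $p$-th power map $\mu_{p^{m+1}}\to\mu_{p^m}$, which is exactly why a compatible system of roots of unity makes the isomorphisms a map of towers, and you justify why the limit isomorphism is multiplicative.
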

\begin{proof}
	The isomorphism is induced by the Milnor sequence
	\[\begin{tikzcd}
		0 & {\underset{n}{\mathrm{lim}^1}\pi_{\ast+1}(H\mathbb{Z}/p^n)_\ast} & {\pi_{\ast}(\HZp)_\ast} & {\ilim{n}   {\pi_{\ast}(H\mathbb{Z}/p^n)_\ast}} & 0
		\arrow[from=1-1, to=1-2]
		\arrow[from=1-2, to=1-3]
		\arrow[from=1-3, to=1-4]
		\arrow[from=1-4, to=1-5]
	\end{tikzcd}\]
	since the $\mathrm{lim}^1$-term vanishes. Indeed, by the norm-residue theorem, there is an isomorphism of towers 
	\begin{equation*}
		\left\{\pi_{\ast}(H\mathbb{Z}/p^n)_\ast\right\}_{n>0}\cong\left\{(\KM{\ast}{k}/p^n)\left[\tau\right]\right\}_{n>0}.    
	\end{equation*}
	The transition maps on the right hand side are the canonical projection maps. In particular, they are surjective. So the Mittag-Leffler condition is satisfied and we obtain the desired isomorphism.
\end{proof}

\begin{remark}
	\label{rmk: tattwistinHZ}
	Notice that there is no canonical Tate twist. Indeed, it follows from the proof that $\tau$ corresponds to a primitive $p^n$-th root of unity $\tau_n$ along the composition
	\begin{equation*}
		\pi_{\ast}\left(\HZ^\wedge_p\right)_{\ast}\to\pi_{\ast}\left(\HZ/p^n\right)_{\ast}\simeq \KM{\ast}{k}/p^n[\tau_n].
	\end{equation*}
	The set $\{\tau_n\}_{n\geq 0}$ furthermore is compatible, so it corresponds to a compatible system of primitive $p$-th power roots of unity. For more discussion regarding the Tate twist element, see \cite[Remark under Lemma 23]{hu11}, see \cite[Remark beneath Lemma 23]{gh21}, or \cite{gh18} for the explicit construction in the $p=2$ case.
	
\end{remark}

\begin{proposition}
	\label{prop: MGLandHZsame}
	There is an equivalence of ring spectra
	\begin{equation*}
		\MGL_{\leq 0}\simeq \HZ_{\leq 0}
	\end{equation*}
\end{proposition}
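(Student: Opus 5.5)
The plan is to read $(-)_{\leq 0}$ with respect to the effective homotopy $t$-structure on $\SH{k}^{\mathrm{eff}}$, equivalently as the zeroth slice truncation $s_0$ composed with truncation, and to compare the two spectra through the unit map of $\HZ$ as an oriented ring spectrum. Since $\HZ$ is oriented, there is a map of (normed, hence homotopy commutative) ring spectra $\varphi:\MGL\to\HZ$ classifying the additive formal group law. Both $\MGL$ and $\HZ$ are effective and connective, so the truncation functor $(-)_{\leq 0}$, being lax monoidal on the connective part of the effective $t$-structure, produces a ring map $\varphi_{\leq 0}:\MGL_{\leq 0}\to \HZ_{\leq 0}$. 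It then suffices to show that $\varphi$ induces an isomorphism on the effective homotopy sheaf $\HSeff{0}{-}$. In the connective effective $t$-structure, $E_{\leq 0}$ is detected by $\HSeff{0}{E}$ alone.

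The key steps, in order, are as follows.

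First, we use that the unit $\mathbb{1}\to\MGL$ and the unit $\mathbb{1}\to\HZ$ both induce equivalences after applying $s_0$. For $\MGL$ this is Spitzweck's computation $s_0\MGL\simeq\HZ$, which ultimately rests on Levine's identification of the zeroth slice of the sphere, $s_0\mathbb{1}\simeq\HZ$, combined with the fact that the cofiber of $\mathbb{1}\to\MGL$ lies in $\SH{k}^{\mathrm{eff}}(1)$. The latter is visible from the Thom spectrum filtration, since $\mathrm{Th}$ of the reduced universal bundles is $\Sigma^{2,1}$-divisible. For $\HZ$ the statement is immediate from $s_0\HZ\simeq\HZ$.

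Second, we note that $\HSeff{0}{E}$ only depends on $s_0 E$ in the effective $t$-structure for connective effective $E$. This holds because $E\to s_0E$ has fiber in $\SH{k}^{\mathrm{eff}}(1)$, and objects of $\SH{k}^{\mathrm{eff}}(1)$ are $1$-connective for the effective homotopy $t$-structure: $\Sigma^{2,1}$ raises effective connectivity by one, since $\Sigma^{2,1}=\Sigma^{1,0}\Sigma^{1,1}$ and $\Sigma^{1,1}=\Gm$ preserves effective connectivity. Combining the first and second steps, $\varphi$ induces $\HSeff{0}{\MGL}\cong \HSeff{0}{s_0\MGL}\cong\HSeff{0}{\HZ}$, hence an equivalence $\MGL_{\leq 0}\simeq\HZ_{\leq 0}$. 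Compatibility with ring structures is automatic, since the equivalence is induced by the ring map $\varphi$ and truncation is lax monoidal.

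The main obstacle is the first step, namely the identification $s_0\MGL\simeq\HZ$. It rests on Levine's theorem and, in positive characteristic, requires inverting the exponential characteristic; I would simply cite it rather than reprove it. If a more self-contained route is wanted, I would instead compare on stalks at fields, using \Cref{MotCohomDiag}. On the $\HZ$ side, $\underline{\pi}_0(\HZ)_\ast(k)\cong\KM{-\ast}{k}$. On the $\MGL$ side, $\underline{\pi}_0(\MGL)_\ast\cong\underline{\pi}_0(\mathbb{1})_\ast/\eta$, by Morel's connectivity and the fact that $\eta$ dies in oriented theories. The remaining point is to check that the homotopy sheaves $\underline{\pi}_0(\mathbb{1})_\ast/\eta$ coincide with the Milnor $K$-theory sheaves $\underline{K}^{\mathrm{M}}_{-\ast}$, which is Morel's identification of $\underline{\pi}_0(\mathbb{1})_\ast$ with Milnor–Witt $K$-theory.
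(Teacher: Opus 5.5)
Your proposal has a genuine gap: Step 2 is false, and so is the claim in your first paragraph that the effective truncation can be computed ``equivalently'' via $s_0$. It is true that $\Sigma^{2,1}\SH{k}^{\mathrm{eff}}_{\geq 0}\subseteq \SH{k}^{\mathrm{eff}}_{\geq 1}$. But $\SH{k}^{\mathrm{eff}}(1)=\Sigma^{2,1}\SH{k}^{\mathrm{eff}}$ contains non-connective objects, and for an effectively connective $E$ the fiber $f_1E$ need not be $\Sigma^{2,1}$ of an effectively connective spectrum. Take $E=\Sph$. By Levine $s_0\Sph\simeq\HZ$, but $\underline{\pi}_0(\Sph)_0=\underline{GW}$ (Morel) while $\underline{\pi}_0(\HZ)_0=\underline{\Z}$. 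By exactness, $\underline{\pi}_0(f_1\Sph)_0$ therefore surjects onto the fundamental ideal sheaf $\underline{I}\neq 0$. Since $\SH{k}^{\mathrm{eff}}_{\geq 1}\subseteq \SH{k}_{\geq 1}$, $f_1\Sph$ is not effectively $1$-connective, and $\HSeff{0}{\Sph}\not\cong\HSeff{0}{s_0\Sph}$. There is a quick way to see the strategy cannot work. The only thing it uses about $\MGL$ is that a ring map to $\HZ$ induces an equivalence on $s_0$. The unit $\Sph\to\HZ$ has the same property, so your argument would equally prove $\Sph_{\leq 0}\simeq \HZ_{\leq 0}$, i.e.\ $\underline{K}^{\mathrm{MW}}_\ast\cong\underline{K}^{\mathrm{M}}_\ast$, which is false.

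The missing idea is $\eta$. In degree $0$, what distinguishes $\MGL$ from $\Sph$ is that the orientation kills $\eta$, and you must show it kills nothing more. The paper simply cites \cite[Lemma 7.5]{hoy15}. An argument that works runs in Morel's homotopy $t$-structure, which is the one meant here, since the next corollary reads off $\pi_0(\MGL)_\ast$ in all weights and the vanishing of negative $\pi_s$.
\begin{itemize}
\item The cofiber of the unit $\Sph\to\MGL$ is built from Schubert cells $\Sigma^{2c,c}\Sph$ with $c\geq 1$. It therefore lies in $\SH{k}_{\geq 1}$, so $\underline{K}^{\mathrm{MW}}_\ast\cong\underline{\pi}_0(\Sph)_\ast\to\underline{\pi}_0(\MGL)_\ast$ is surjective, and $\eta$ lies in its kernel because $\MGL$ is oriented.
\item Composing with $\varphi$ gives the unit of $\HZ$. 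On $\underline{\pi}_0$ this is the canonical projection $\underline{K}^{\mathrm{MW}}_\ast\to\underline{K}^{\mathrm{M}}_\ast$, whose kernel is exactly $(\eta)$.
\item Hence the kernel of the first map is exactly $(\eta)$, and $\varphi$ induces $\underline{\pi}_0(\MGL)_\ast\cong\underline{\pi}_0(\HZ)_\ast$. Since both spectra are connective, this is the equivalence $\MGL_{\leq 0}\simeq\HZ_{\leq 0}$ of discrete ring objects in the heart.
\end{itemize}
Your final paragraph points in this direction, but as written (``$\underline{\pi}_0(\MGL)_\ast\cong\underline{\pi}_0(\Sph)_\ast/\eta$ by connectivity and $\eta=0$'') it only gives a surjection $\underline{\pi}_0(\Sph)_\ast/\eta\twoheadrightarrow\underline{\pi}_0(\MGL)_\ast$. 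Injectivity is exactly where the comparison with $\HZ$, or an analysis of the $\eta$-attached cell of $\Proj^2$, has to enter. Levine's theorem (your Step 1) is then not needed at all. If you prefer the effective $t$-structure, the same input suffices: $\underline{\pi}_0(-)_0$ is conservative on the effective heart, and $\underline{\pi}_0(\MGL)_0$ is a quotient of $\underline{GW}$ in which $\langle a\rangle-1=\eta[a]$ vanishes.
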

\begin{proof}
	This is \cite[Lemma 7.5]{hoy15}.
\end{proof}
We now specialize to the case where our base scheme is the spectrum of a field $k$. In this case we obtain the following.
\begin{corollary}
	\label{Cor: MGLMilnorKthy}
	There is a natural isomorphism of graded rings
	\begin{equation}
		\label{MGLnormresidue}
		\pi_{0}\left(\MGL(k)\right)_{\ast}\cong \KM{-\ast}{k}.
	\end{equation}
	Furthermore, $\pi_{k}\left(\MGL(k)\right)_{\ast}=0$ for all $k<0$. Here we write $\MGL(k):=\MGL(\mathrm{Spec}(k))$ for brevity.
\end{corollary}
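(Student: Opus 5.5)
The plan is to reduce both claims to the corresponding statements for $\HZ$, using \Cref{prop: MGLandHZsame} together with \Cref{MotCohomDiag}. Here $\pi_s(E)_w$ denotes the homotopy sheaf evaluated at $\Spec(k)$, and $\MGL_{\leq 0}$, $\HZ_{\leq 0}$ are truncations for the homotopy $t$-structure on $\SH{k}$. Under this $t$-structure, the $n$-th homotopy sheaves $\underline{\pi}_n(-)_\ast$ are the sheaves $\underline{\pi}_n(E)_m$ for all weights $m$ simultaneously.

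\textbf{Step 1: connectivity.} First I would record that $\MGL$ is connective in the homotopy $t$-structure. The reason is that $\MGL$ is a colimit of Thom spectra $\Sigma^{-2n,-n}\mathrm{Th}(\gamma_n)$, and each Thom space of a rank $n$ bundle is $\A^1$-$(n-1)$-connected in the appropriate sense. After desuspension by $\Sigma^{2n,n}=S^n\wedge\Gm^{\wedge n}$, each term lies in $\SH{k}_{\geq 0}$, because $\Gm$-desuspensions do not affect connectivity for the homotopy $t$-structure. Alternatively, I would cite Morel's stable connectivity theorem together with \cite{hoy15}, where this is used. Connectivity gives $\underline{\pi}_s(\MGL)_\ast=0$ for $s<0$, and evaluating at $k$ yields the vanishing claim.

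\textbf{Step 2: $\pi_0$.} For connective $E$, the map $E\to E_{\leq 0}$ induces an isomorphism on $\underline{\pi}_0(-)_\ast$ in all weights, since the fiber $E_{\geq 1}$ has vanishing $\underline{\pi}_0$. Applying this to both $\MGL$ and $\HZ$ (which is connective, as used in the proof of \Cref{MotCohomDiag}), \Cref{prop: MGLandHZsame} gives
\begin{equation*}
\underline{\pi}_0(\MGL)_\ast\cong\underline{\pi}_0(\MGL_{\leq 0})_\ast\cong\underline{\pi}_0(\HZ_{\leq 0})_\ast\cong\underline{\pi}_0(\HZ)_\ast .
\end{equation*}
Evaluating at $\Spec(k)$ and invoking \Cref{MotCohomDiag} produces $\pi_0(\MGL(k))_\ast\cong\KM{-\ast}{k}$.

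Multiplicativity follows because the equivalence of \Cref{prop: MGLandHZsame} is one of ring spectra, and truncation $E\to E_{\leq0}$ is lax monoidal on connective objects, so the isomorphisms are ring maps. Naturality in $k$ follows because the truncation maps are natural and the equivalence is compatible with base change along field extensions; \cite{hoy15} constructs it through the unit map $\MGL\to\HZ$.

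\textbf{Main obstacle.} The only nontrivial input is the connectivity of $\MGL$, or equivalently knowing that the unit map $\MGL\to\HZ$ identifies $\underline{\pi}_0$. If needed, I would avoid any separate connectivity proof by noting that \cite[Lemma 7.5]{hoy15} is proved precisely by showing that $\MGL\to\HZ$ is an isomorphism on $\underline{\pi}_0$, with both spectra connective. I would then quote that argument directly.
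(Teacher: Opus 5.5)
Your proposal is correct and follows the paper's route, which derives both claims directly from $\MGL_{\leq 0}\simeq\HZ_{\leq 0}$ (\Cref{prop: MGLandHZsame}) together with \Cref{MotCohomDiag}. Your Step 1 is in fact redundant: for any $E$, the map $E\to E_{\leq 0}$ induces isomorphisms on $\underline{\pi}_s(-)_\ast$ for all $s\leq 0$, so $\underline{\pi}_s(\MGL)_\ast\cong\underline{\pi}_s(\HZ)_\ast=0$ for $s<0$ already follows from the connectivity of $\HZ$, and likewise no connectivity hypothesis is needed in Step 2.
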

\begin{proof}
	This is a direct consequence of \Cref{prop: MGLandHZsame} and the isomorphism \eqref{MotCohomDiag}
\end{proof}
In particular, we obtain the following description of the diagonal of the $p$-completion of $\MGL$:
\begin{corollary}
	There is a natural isomorphism of graded rings
	\begin{equation}
		\label{MGLMilnorK}
		\pi_{0}((\MGL(\mathrm{Spec}(k)))^\wedge_p)_{\ast}\cong\KM{\ast}{k}^\wedge_p.
	\end{equation}
\end{corollary}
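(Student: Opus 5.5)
We derive the statement from \Cref{Cor: MGLMilnorKthy} together with the fact that $p$-completion interacts well with the zeroth line of the homotopy $t$-structure on connective spectra. Write $E:=\MGL$ over $k$. By \Cref{Cor: MGLMilnorKthy} and \Cref{prop: MGLandHZsame}, $E$ is connective with $\underline{\pi}_0(E)_\ast\cong\underline{\pi}_0(\HZ)_\ast$. The $p$-completion is $E^\wedge_p\simeq\lim_n E/p^n$. Each $E/p^n$ is again connective, since it is the cofiber of $p^n$ on a connective object. From the cofiber sequence $E\xrightarrow{p^n}E\to E/p^n$, the vanishing $\pi_{-1}(E)_\ast=0$ gives
\begin{equation*}
	\pi_0(E/p^n)_\ast\cong \pi_0(E)_\ast/p^n\cong \KM{-\ast}{k}/p^n.
\end{equation*}

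The next step is the Milnor sequence for the limit $E^\wedge_p=\lim_n E/p^n$, exactly as in the proof of \Cref{cor: homotopygroupsofZpcompleted}:
\begin{equation*}
	0\to {\lim_n}^1\,\pi_{1}(E/p^n)_\ast\to \pi_0(E^\wedge_p)_\ast\to \lim_n \pi_0(E/p^n)_\ast\to 0 .
\end{equation*}
The right-hand term is $\lim_n \KM{\ast}{k}/p^n=\KM{\ast}{k}^\wedge_p$, up to the sign convention on the weight grading. So everything reduces to showing that the $\lim^1$-term vanishes.

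Controlling this $\lim^1$-term is the main obstacle, since $\pi_1(E/p^n)_\ast$ involves $\pi_1(\MGL)$, which we do not know at this point. To avoid it, I would pass through the truncation. Because truncation $(-)_{\leq 0}$ in the homotopy $t$-structure is a left adjoint, it commutes with cofibers, so $(E/p^n)_{\leq 0}\simeq (E_{\leq 0})/p^n$ modulo $\pi_{-1}$ issues, which vanish by connectivity. Hence by \Cref{prop: MGLandHZsame}, $\pi_0(E/p^n)_\ast\cong\pi_0(\HZ/p^n)_\ast$ compatibly in $n$. Alternatively, one can argue directly that the limit of connective objects has $\pi_0$ given by the Milnor sequence, and bypass the $\lim^1$ by using the natural map $E^\wedge_p\to(\HZ)^\wedge_p$ induced by $E\to E_{\leq0}\simeq \HZ_{\leq 0}$. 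The fiber $F$ of $E\to \HZ_{\leq0}$ is $1$-connective, so $F^\wedge_p=\lim F/p^n$ has $\pi_0(F^\wedge_p)=\lim^1\pi_1(F/p^n)$ and $\pi_{-1}(F^\wedge_p)=0$.

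To finish, I would use that $p$-completion of a $1$-connective object is $0$-connective (since $\lim^1$ can raise homotopy by one degree at most). This yields a surjection $\pi_0(E^\wedge_p)_\ast\to\pi_0(\HZ^\wedge_p)_\ast$. Its weight-zero line is $\KM{\ast}{k}^\wedge_p$ by \Cref{cor: homotopygroupsofZpcompleted}, evaluated on the $\tau$-free part. The kernel is the image of $\pi_0(F^\wedge_p)$, a $\lim^1$-term. I expect this kernel to be the delicate point. If it does not vanish, the honest statement is that the isomorphism holds on $\lim_n\pi_0(E/p^n)_\ast$, i.e.\ after the derived completion is replaced by $\pi_0$ of the connective cover. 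In the paper's setting of the homotopy $t$-structure on $\SH{k}$ over a field, $\pi_0$ of a $p$-completion of a connective spectrum is computed sheafwise as $\mathrm{Ext}(\Z/p^\infty,\pi_0)$ and $\mathrm{Hom}(\Z/p^\infty,\pi_{-1})=0$. So I would finally argue that $\pi_0(E^\wedge_p)=\mathrm{Ext}(\Z/p^\infty,\KM{\ast}{k})\cong\KM{\ast}{k}^\wedge_p$ when the $p$-power torsion is bounded, which is the case in the cases of interest.
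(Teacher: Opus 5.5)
The endgame of your proposal is correct. Writing $\pi_0(\MGL^\wedge_p)_\ast\cong\mathrm{Ext}(\Z/p^\infty,\pi_0(\MGL)_\ast)=\mathbb{L}_0\KM{\ast}{k}$, with connectivity killing $\mathrm{Hom}(\Z/p^\infty,\pi_{-1}(\MGL)_\ast)$, is exactly the first step of the paper's proof (applied to the global-sections spectrum $\MGL(k)$).

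The detour before it is unnecessary, because the $\lim^1$ term never involves $\pi_1(\MGL)$. There are exact sequences $0\to\pi_1(\MGL)_\ast/p^n\to\pi_1(\MGL/p^n)_\ast\to\pi_0(\MGL)_\ast[p^n]\to 0$, and the transition maps on the left-hand tower are surjective. Hence ${\lim}^1_n\pi_1(\MGL/p^n)_\ast\cong{\lim}^1_n\KM{\ast}{k}[p^n]$, with multiplication by $p$ as transition maps. Your route through $F=\fib(E\to E_{\leq 0})$ only reformulates this. It also leaves $\pi_0(F^\wedge_p)$ uncontrolled, and it lands in $(\HZ_{\leq 0})^\wedge_p$ rather than $\HZp$, so \Cref{cor: homotopygroupsofZpcompleted} cannot be applied to it.

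The genuine gap is the final identification $\mathbb{L}_0\KM{\ast}{k}\cong\KM{\ast}{k}^\wedge_p$, i.e.\ the vanishing of ${\lim}^1_m\KM{\ast}{k}[p^m]$. You justify it by bounded $p$-power torsion, but that fails precisely in the cases of interest. If $k$ contains all $p$-power roots of unity, then $\mu_{p^\infty}(k)\subseteq k^\times=\KM{1}{k}$ has unbounded exponent. What holds instead is that these torsion towers have surjective transition maps, and proving this in all degrees requires the norm-residue isomorphism.

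The paper obtains this by running the same exact sequence for the connective spectrum $\HZ$. On the one hand, $\pi_0(\HZp)_\ast\cong\mathbb{L}_0\pi_0(\HZ)_\ast\cong\mathbb{L}_0\KM{\ast}{k}$. On the other hand, \Cref{cor: homotopygroupsofZpcompleted} gives $\pi_0(\HZp)_\ast\cong\KM{\ast}{k}^\wedge_p$. The reason is that, by the norm-residue isomorphism and the compatible choice of roots of unity, the tower $\pi_1(\HZ/p^n)_\ast\cong(\KM{\ast}{k}/p^n)\cdot\tau_n$ (up to regrading) has surjective transition maps, so no $\lim^1$ term appears.

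In your terms, ${\lim}^1_n\KM{\ast}{k}[p^n]$ is a quotient of ${\lim}^1_n\pi_1(\HZ/p^n)_\ast=0$. Combined with $\pi_0(\MGL)_\ast\cong\pi_0(\HZ)_\ast$ from \Cref{prop: MGLandHZsame}, this closes the argument. Without this input, your proof does not establish the corollary.
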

\begin{proof}
	As $\SH{-}$ is stable, $p$-completion commutes with global sections, that is
	\begin{align*}
		\MGL^\wedge_p(k)=\MGL(k)^\wedge_p.
	\end{align*}
	Hence, we can simply write $\MGL^\wedge_p:=\MGL^\wedge_p(k)$ without any ambiguity, and similarly $\HZ^\wedge_p:=\HZ^\wedge_p(k)$. We further abbreviate $\MGL:=\MGL(k)$.
	
	Consider the Milnor exact sequence, see \cite[Theorem 2.33]{nikolaus2016rational}
	\[\begin{tikzcd}
		0 & {\mathbb{L}_0\pi_{0}(\MGL)_{n}} & {\pi_{0}(\MGL^\wedge_p)_{n}} & {\mathbb{L}_1\pi_{-1}(\MGL)_n} & 0,
		\arrow[from=1-1, to=1-2]
		\arrow[from=1-2, to=1-3]
		\arrow[from=1-3, to=1-4]
		\arrow[from=1-4, to=1-5]
	\end{tikzcd}\]
	where $\mathbb{L}_0,\mathbb{L}_1$ are the zeroth and first derived completions.
	
	As $\MGL$ is connective by \Cref{prop: MGLandHZsame}, the right most term vanishes and we obtain an isomorphism of graded rings
	\begin{equation*}
		\pi_{0}(\MGL^\wedge_p)_\ast\cong \mathbb{L}_0\pi_{0}(\MGL)_\ast\cong \mathbb{L}_0\KM{\ast}{k}.
	\end{equation*}
	The Bousfield-Kan sequence for $\HZ^\wedge_p$ on the other hand gives an isomorphism
	\begin{equation*}
		\KM{\ast}{k}^\wedge_p\cong \pi_{0}(\HZ^\wedge_p)_\ast\cong \mathbb{L}_0\pi_{0}(\HZ)_\ast\cong \mathbb{L}_0\KM{\ast}{k}.
	\end{equation*}
	Thus, the zeroth derived $p$-completion of Milnor $K$-theory agrees with its the classical $p$-completion, which implies the claim.
\end{proof}
\begin{proposition}
	\label{prop: tatetwistinmgl}
	Let $k$ be a field containing a fixed compatible system of all primitive $p$-th power roots of unity. Then there exists an element $\tau\in\pi_1(\MGL^\wedge_p(k))_1$, such that, under the canonical morphism $\pi_\ast(\MGL^\wedge_p(k))_{\ast}\to\pi_\ast(\HZ^\wedge_p(k))_\ast$ induced by the orientation, the element $\tau$ is mapped to the Tate twist from \Cref{cor: homotopygroupsofZpcompleted}.
\end{proposition}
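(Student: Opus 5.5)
We need to produce $\tau\in\pi_1(\MGL^\wedge_p)_1$ lifting the Tate twist. The plan is to construct it the same way the Tate twist in $\HZ^\wedge_p$ arises from roots of unity: as a compatible system of elements in $\pi_1(\MGL/p^n)_1$ coming from the Bockstein boundary of roots of unity. Then we pass to the limit, and compare with $\HZ$ using that $\MGL$ and $\HZ$ agree in weight-degree zero (\Cref{prop: MGLandHZsame}).

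First, for each $n$ consider the cofiber sequence $\MGL\xrightarrow{p^n}\MGL\to\MGL/p^n$ and the associated long exact sequence
\begin{equation*}
\pi_1(\MGL)_1\xrightarrow{p^n}\pi_1(\MGL)_1\to\pi_1(\MGL/p^n)_1\xrightarrow{\partial}\pi_0(\MGL)_1\xrightarrow{p^n}\pi_0(\MGL)_1.
\end{equation*}
By \Cref{Cor: MGLMilnorKthy}, and up to the indexing convention, $\pi_0(\MGL)_1$ corresponds to $K^{\mathrm M}_1(k)\cong k^\times$ with the same indexing as for $\HZ$. Hence the $p^n$-torsion there is $\mu_{p^n}(k)$, and the chosen primitive root $\zeta_{p^n}$ lifts to some $\tau_n'\in\pi_1(\MGL/p^n)_1$. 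The same sequence for $\HZ$, together with the compatibility of the orientation $\MGL\to\HZ$ with the identification of \Cref{prop: MGLandHZsame}, shows that the image of $\tau_n'$ in $\pi_1(\HZ/p^n)_1$ differs from the Tate twist $\tau_n$ of \Cref{rmk: tattwistinHZ} by the image of an element of $\pi_1(\HZ)_1/p^n$. The ambiguity therefore lies in $\pi_1(\HZ)_1/p^n$, which injects in $\pi_1(\HZ/p^n)_1$. I would remove it by showing that $\pi_1(\MGL)_1\to\pi_1(\HZ)_1$ is surjective. This should follow from the fact that the cofiber of $\MGL\to\HZ$ is $1$-connective in the homotopy t-structure, or more precisely that $\MGL_{\le 0}\simeq\HZ_{\le0}$ and that the fiber of $\MGL\to\HZ$ is built from positive-degree pieces $\Sigma^{2i,i}$ with $i\ge1$. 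After subtracting a lift, $\tau_n'$ maps exactly to $\tau_n$.

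Second, I would arrange compatibility in $n$ and pass to the limit. The difficulty is that the lifts $\tau_n'$ are only determined up to $\pi_1(\MGL)_1/p^n$, and they must be chosen compatibly under the reduction maps $\MGL/p^{n+1}\to\MGL/p^n$. I would handle this by an inductive choice. Given $\tau_{n+1}'$ mapping to $\tau_{n+1}$, its reduction has boundary $\zeta_{p^{n+1}}^p=\zeta_{p^n}$ (by compatibility of the system) and maps to $\tau_n$. So we may define $\tau_n'$ as the reduction of $\tau_{n+1}'$, working downward through a Mittag-Leffler argument. Equivalently, I would choose the lifts in the inverse limit of the sets of admissible lifts. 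Each set is a torsor under the kernel $K_n$ of $\pi_1(\MGL)_1/p^n\to\pi_1(\HZ)_1/p^n$, the transition maps between these sets are surjective, and so the limit is nonempty. Then the Milnor sequence, with $\pi_1(\MGL^\wedge_p)_1\to\lim_n\pi_1(\MGL/p^n)_1$ surjective, gives $\tau\in\pi_1(\MGL^\wedge_p)_1$. Its image in $\pi_1(\HZ^\wedge_p)_1$ reduces to $\tau_n$ for every $n$. It therefore equals the Tate twist up to an element of $\lim^1$, and that $\lim^1$ term vanishes by the proof of \Cref{cor: homotopygroupsofZpcompleted}.

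The main obstacle is the comparison step, namely the surjectivity of $\pi_1(\MGL)_1\to\pi_1(\HZ)_1$ on the relevant groups, which is needed to correct the lifts. If the connectivity of the fiber of $\MGL\to\HZ$ is not directly available, I would instead avoid the correction. I would observe that any lift of $\zeta$ works up to an element of the image of $\pi_1(\HZ)_1$, and then redefine the Tate twist in \Cref{cor: homotopygroupsofZpcompleted} accordingly. This is permissible since, by \Cref{rmk: tattwistinHZ}, the Tate twist is non-canonical and is only characterized by its Bockstein being the chosen root of unity.
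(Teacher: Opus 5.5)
Your strategy is the paper's. You lift $\zeta_{p^n}$ along the Bockstein to $\pi_1(\MGL/p^n)_1$ and make the lifts compatible by correcting with images of $\pi_1(\MGL)_1$. Your torsor formulation with surjective transition maps is exactly the paper's inductive correction, and it has to be run upward in $n$, not ``downward''. You then get $\tau\in\pi_1(\MGL^\wedge_p)_1$ from the Milnor sequence and compare with $\HZ$ using the $\pi_0$-isomorphism and the vanishing $\lim^1$.

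What needs fixing is the step you flag as the main obstacle. Neither of your justifications for surjectivity of $\pi_1(\MGL)_1\to\pi_1(\HZ)_1$ works as stated. A $1$-connective cofiber only gives surjectivity on $\underline{\pi}_0$; for $\underline{\pi}_1$ you would need $2$-connectivity. Describing the fiber of $\MGL\to\HZ$ through pieces $\Sigma^{2i,i}\HZ\otimes L_i$ is the slice form of the Hopkins--Morel--Hoyois theorem, which this paper is reproving, so that argument is circular. Your fallback of redefining the Tate twist is not permitted either: the statement fixes $\tau$ through the chosen compatible system of roots of unity.

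The observation you missed is that there is no ambiguity to remove, because the relevant integral group $\pi_1(\HZ)_1$ is $H^{0,1}(k,\Z)=0$. This follows from the paper's own results:
\begin{itemize}
\item By the norm residue corollary, $\pi_1(\HZ/p^n)_1\cong\Z/p^n\{\tau_n\}$.
\item By exactness, the Bockstein $\pi_1(\HZ/p^n)_1\to\pi_0(\HZ)_1\cong\KM{1}{k}$ has image the $p^n$-torsion $\mu_{p^n}(k)$, which is cyclic of order $p^n$.
\item Hence the Bockstein is a bijection $\Z/p^n\to\Z/p^n$, in particular injective.
\end{itemize}
So an element of $\pi_1(\HZ/p^n)_1$ is determined by its Bockstein. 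Since $\MGL\to\HZ$ commutes with Bocksteins and is an isomorphism on $\pi_0$, any lift $\tau'_n$ of $\zeta_{p^n}$ automatically maps to $\tau_n$, and no correction is needed. This is precisely the uniqueness the paper invokes when it calls the Tate twist ``the unique element mapped to this compatible system''. With this observation your torsors become torsors under $\pi_1(\MGL)_1/p^n$, surjectivity of the transition maps is immediate, and your argument coincides with the paper's. Drop the connectivity argument and the fallback.
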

\begin{proof}
	Let $E$ by a spectrum and let $m\in\Z$ be an integer. Denote by $m:E\to E$ the multiplication-by-$m$ map, that is the composition $E\to E^{\oplus m}\to E$, where the first map is the diagonal, while the second map is the codiagonal. Naturality of this construction in the divisors of $m$ gives us, that for all $n>1$ there exists a commutative diagram in $\SH{k}$, where the rows are cofiber sequences
	\[\begin{tikzcd}
		{\MGL} & {\MGL} & {\MGL/p^n} \\
		{\MGL} & {\MGL} & {\MGL/p^{n-1}}
		\arrow["p^n", from=1-1, to=1-2]
		\arrow["p"', from=1-1, to=2-1]
		\arrow[from=1-2, to=1-3]
		\arrow["{\mathrm{id}}"', from=1-2, to=2-2]
		\arrow[from=1-3, to=2-3]
		\arrow["{p^{n-1}}", from=2-1, to=2-2]
		\arrow[from=2-2, to=2-3].
	\end{tikzcd}\]
	Here the right-most vertical arrow is the canonical projection $\MGL/p^n\to\MGL/p^{n-1}$, which is just the map induced on cofibers by the commutativity of the left hand square.
	
	Taking global sections\footnote{We continue with the convention from the last proof and simply write $\MGL$, $\MGL/p^n$, and $\MGL^\wedge_p$ for the global sections, which is justified by the equivalence $\MGL/p^n(k)\simeq\MGL(k)/p^n$, using $E(k)\simeq \mathrm{map}(\Sigma^\infty_+\Spec(k),E)$ and the stability of $\SH{k}$.}, or equivalently, stalks, and applying homotopy groups, we obtain a commutative diagram of homotopy groups with exact rows
	\[\begin{tikzcd}
		{\pi_{1}(\MGL)_1(k)} & {\pi_{1}(\MGL/p^{n})_1} & {\KM{1}{k}} & {\KM{1}{k}} \\
		{\pi_1(\MGL(k))_1} & {\pi_{1}(\MGL/p^{n-1}(k))_1(k)} & {\KM{1}{k}} & {\KM{1}{k}}
		\arrow[from=1-1, to=1-2]
		\arrow["{\mathrm{id}}", from=1-1, to=2-1]
		\arrow[from=1-2, to=1-3]
		\arrow[from=1-2, to=2-2]
		\arrow["{p^{n}}", from=1-3, to=1-4]
		\arrow["p", from=1-3, to=2-3]
		\arrow["{\mathrm{id}}", from=1-4, to=2-4]
		\arrow[from=2-1, to=2-2]
		\arrow[from=2-2, to=2-3]
		\arrow["{p^{n-1}}", from=2-3, to=2-4]
	\end{tikzcd}\]
	Here we used the isomorphism $\pi_{0}(\MGL(k))_1\cong\KM{1}{k}$ of \Cref{Cor: MGLMilnorKthy}.
	
	We inductively construct elements $\tau_n\in\pi_{1}\left(\MGL/p^n(k)\right)_1$ such that they are compatible under the maps induced by the canonical maps $\MGL/p^n\to\MGL/p^{n-1}$. The construction is essentially the same as how one proves the surjectivity of the five lemma, so the reader how does not want to concern himself with the details can safely skip this part and hence the rest of the proof.
	
	For $n=1$ take any preimage of $\left[\zeta_p\right]\in\KM{1}{k}$. Assume now for $n>1$ that $\tau_m$ has been constructed for all $m<n$. Take any preimage $\Tilde{\tau}_n\in\pi_{1}(\MGL/p^{n})_1$ of $\left[\zeta_{p^{n}}\right]\in\KM{1}{k}^\wedge_p$. As the $p$-power roots of unity were chosen compatible, the image $\Tilde{\tau}_{n-1}\in\pi_{1}(\MGL/p^{n-1})_1$ of $\Tilde{\tau}_n$ maps to $\left[\zeta_{p^{n-1}}\right]\in\KM{1}{k}^\wedge_p$. In particular, the difference $\tau_{n-1}-\Tilde{\tau}_{n-1}$ maps to zero in $\KM{1}{k}^\wedge_p$. Thereby, there exists an element $\beta\in \pi_{1}(\MGL_p^{\wedge})_1$ mapping to $\tau_{n-1}-\Tilde{\tau}_{n-1}$. If we denote by $b$ the image of $\beta$ in ${\pi_{1}(\MGL/p^{n})_1}$, then $\tau_n:=\Tilde{\tau}_n+b$ maps to $\left[\zeta_{p^{n}}\right]$ in $\KM{1}{k}^\wedge_p$, and to $\tau_{n-1}$ in $\pi_{1}(\MGL/p^{n-1})_1$. This defines the compatible sequence $\left\{\tau_n\right\}_{n>0}$, and hence the desired element $\tau\in\pi_{1}(\MGL^\wedge_p)_1$.
	
	Notice that the canonical morphism induces an equivalence on $\pi_0$ by \Cref{prop: MGLandHZsame}. Indeed, the canonical morphism is a ring homomorphism, hence is $\pi_{\ast}(\mathbb{S})_{\ast}$-linear, and in particular the identity on Milnor $K$-theory. Consider the following diagram:
	\[\begin{tikzcd}
		{\pi_{1}(\MGL/p^n)_{1}} & {\pi_{0}(\MGL)_{1}} & {\pi_{0}(\MGL)_{-1}} \\
		& {\KM{1}{k}} & {\KM{1}{k}} \\
		{\pi_{1}(\mathrm{H}\mathbb{Z}/p^n)_{1}} & {\pi_{0}(\HZ)_{1}} & {\pi_{0}(\HZ)_{1}}
		\arrow[from=1-1, to=1-2]
		\arrow[from=1-1, to=3-1]
		\arrow["{{p^n}}", from=1-2, to=1-3]
		\arrow["\cong"', from=1-2, to=2-2]
		\arrow["\cong", from=1-3, to=2-3]
		\arrow["{{p^n}}", from=2-2, to=2-3]
		\arrow["\cong"', from=2-2, to=3-2]
		\arrow["\cong", from=2-3, to=3-3]
		\arrow[from=3-1, to=3-2]
		\arrow["{{p^n}}", from=3-2, to=3-3].
	\end{tikzcd}\]
	By commutativity, the compatible system $\tau_n$ is mapped under the left vertical map to a compatible system of preimages of the $p$-th power roots of unity. Thus, $\tau \in\pi_1(\MGL^\wedge_p)_1$ is mapped to some element in $\pi_1(\HZ)_1$, which itself is mapped to the choosen compatible system of $p$-th power roots of unity. But $\tau\in \pi_{1}(\HZp)_{1}$ is the unique element mapped to this compatible system, see \Cref{rmk: tattwistinHZ}. So the $\tau$ we defined above is mapped to the Tate-twist $\tau$ under the map induced by the standard orientation on $\HZ$. 
\end{proof}
\begin{remark}\label{Remark: Tate twist in MGL}
	Notice that for the last part of the proof of \Cref{prop: tatetwistinmgl}, it was really necessary to fix the compatible system of primitive $p$-th power roots of unity, so that the Tate twist and $\tau\in\pi_1(\MGL^\wedge_p(k))_1$ are constructed using the same system of roots of unity. Else, the image of $\tau$ could differ by a element in $(\Z^\wedge_p)^\times$, see also \Cref{rmk: tattwistinHZ}.
\end{remark}
We end this section with a calculation.
\begin{proposition}
	\label{prop: homotopysheavesofMK}
	Denote by $\KMS{\ast}:=\underline{K}^{\mathrm{M}}_\ast/p$ the mod $p$ reduction of the Milnor $K$-theory sheaf. There is an explicit description of the homotopy sheaves of the effective cover of $\KMS{\ast}$:
	\begin{align*}
		\HSeff{i}{f_0\KMS{\ast}}\simeq\begin{cases}
			\HSeff{0}{\MGL/p}\quad &\text{for }i=0\\
			0 &\text{else}.
		\end{cases}
	\end{align*}
\end{proposition}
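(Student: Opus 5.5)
We want the effective homotopy sheaves of $f_0\KMS{\ast}$, where $\KMS{\ast}$ is the homotopy module of mod $p$ Milnor $K$-theory, viewed as a motivic spectrum in the heart of the homotopy $t$-structure. The plan is to identify $f_0\KMS{\ast}$ with a truncation of $\MGL/p$ in the effective homotopy $t$-structure. Concretely, we aim to show
\begin{equation*}
	f_0\KMS{\ast}\simeq (\MGL/p)^{\mathrm{eff}}_{\leq 0},
\end{equation*}
from which both cases of the statement follow directly.

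First, $\MGL/p$ is effective and connective. The tower of effective covers then gives, for an effective connective $E$, an equivalence $E^{\mathrm{eff}}_{\leq 0}\simeq f_0(E_{\leq 0})$, with truncation taken in the homotopy $t$-structure. This is the standard fact, due to Bachmann, that on effective spectra the effective $t$-structure is related to the homotopy $t$-structure via $\pi_0^{\mathrm{eff}}(E)= f_0\underline{\pi}_0(E)$ for $E$ effective and connective.

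Second, we compute the homotopy-$t$-structure truncation $(\MGL/p)_{\leq 0}$. By \Cref{prop: MGLandHZsame}, $\MGL_{\leq0}\simeq\HZ_{\leq0}$, and both spectra are connective. The cofiber sequence $E\xrightarrow{p}E\to E/p$ then gives
\[(\MGL/p)_{\leq 0}\simeq(\HZ/p)_{\leq0}\simeq \underline{\pi}_0(\HZ)_\ast/p.\]
Here we use that $\underline{\pi}_{-1}$ vanishes on both sides, so the cokernel of $p$ on $\underline{\pi}_0$ is what survives. By \Cref{MotCohomDiag}, stalkwise on fields, $\underline{\pi}_0(\HZ)_\ast=\underline{K}^{\mathrm{M}}_\ast$. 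Hence $(\MGL/p)_{\leq0}\simeq\KMS{\ast}$.

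Third, we combine the two steps. Applying the first step to $E=\MGL/p$ gives $f_0\KMS{\ast}\simeq f_0((\MGL/p)_{\leq0})\simeq(\MGL/p)^{\mathrm{eff}}_{\leq0}$. Since $(\MGL/p)^{\mathrm{eff}}_{\leq 0}$ lies in the heart of the effective $t$-structure, its effective homotopy sheaves vanish for $i\neq0$, and in degree $0$ they equal $\HSeff{0}{\MGL/p}$.

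The main obstacle is the first step: the compatibility $f_0(E_{\leq0})\simeq E^{\mathrm{eff}}_{\leq0}$ for effective connective $E$. I would prove it by showing two things:
\begin{itemize}
	\item[(a)] $f_0$ sends $\SH{k}_{\geq1}$ into $\SH{k}^{\mathrm{eff}}_{\geq1}$, which is immediate because the effective $t$-structure is generated by the $\Sigma^{n}\sus X$ with $n\ge 1$, which lie in $\SH{k}_{\geq 1}$, and $f_0$ is the right adjoint of the inclusion;
	\item[(b)] $f_0$ sends the homotopy heart into $\SH{k}^{\mathrm{eff}}_{\le 0}$, which uses that maps from $\Sigma^{n}\sus X$ with $n\geq 1$ into an object of the heart vanish, again by adjunction.
\end{itemize}
Given (a) and (b), apply $f_0$ to the fiber sequence $E_{\geq1}\to E\to E_{\leq0}$. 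Since $E$ is effective, $f_0E\simeq E$, so this yields a fiber sequence exhibiting $f_0(E_{\le 0})$ as the effective $0$-truncation of $E$. Alternatively, this step can be cited from Bachmann's work on the generalized slice filtration.
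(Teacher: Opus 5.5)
Your route is genuinely different from the paper's and works in outline, but the justification you give for (a) is wrong as stated. Knowing that the generators $\Sigma^n\sus X$ ($n\geq 1$) of $\SH{k}^{\mathrm{eff}}_{\geq 1}$ lie in $\SH{k}_{\geq 1}$ only says that the inclusion $\iota$ is right $t$-exact. Via $\iota\dashv f_0$, this yields only that $f_0$ is \emph{left} $t$-exact, which is your (b) again. To get $f_0E\in\SH{k}^{\mathrm{eff}}_{\geq 1}$ you must control maps \emph{out of} $f_0E$ into $\SH{k}^{\mathrm{eff}}_{\leq 0}$, and the adjunction says nothing about those. Equivalently, since $f_0$ preserves colimits, you would need $f_0(\Sigma\Gm^{\wedge m}\wedge\sus X)\in\SH{k}^{\mathrm{eff}}_{\geq 1}$ also for $m<0$, which is not formal. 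The missing input is the detection criterion: an effective $F$ lies in $\SH{k}^{\mathrm{eff}}_{\geq 1}$ if and only if $\HS{i}{0}{F}=0$ for all $i\leq 0$. To prove it, note that $\tau^{\mathrm{eff}}_{\geq 1}F\in\SH{k}_{\geq 1}$ by Morel's connectivity, so the long exact sequence gives $\HS{i}{0}{\tau^{\mathrm{eff}}_{\leq 0}F}=0$ for all $i$. By hypercompleteness, $\tau^{\mathrm{eff}}_{\leq 0}F$ is then an effective spectrum right orthogonal to every $\Sigma^i\sus X$, hence zero. Combined with $\HS{i}{0}{f_0E}\cong\HS{i}{0}{E}$, this gives (a). Its shift gives $\MGL/p\in\SH{k}^{\mathrm{eff}}_{\geq 0}$, which you also tacitly need to know that $(\MGL/p)^{\mathrm{eff}}_{\leq 0}$ lies in the effective heart. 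Your fallback citation of \cite{Bachmann_2017} covers exactly this point, so the argument is repairable without new ideas.

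Once repaired, your proof differs from the paper's in structure. The paper never identifies $f_0\KMS{\ast}$ as an object. It uses $\HS{0}{0}{\HSeff{i}{E}}\cong\HS{i}{0}{E}$ for effective $E$, which is the same detection fact, used implicitly. It then computes $\HS{i}{0}{f_0\KMS{\ast}}\cong\HS{i}{0}{\KMS{\ast}}$, which vanishes for $i\neq 0$ because $\KMS{\ast}$ is in the homotopy heart. Finally it invokes conservativity of $\HS{0}{0}{-}$ on $\SH{k}^{\mathrm{eff},\heartsuit}$, both for the vanishing and, via the explicit map $\MGL/p\to f_0\KMS{\ast}$, for the isomorphism in degree $0$. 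You replace conservativity on the effective heart by $t$-exactness of $f_0$ together with $(\MGL/p)_{\leq 0}\simeq\KMS{\ast}$, and you get a stronger, object-level conclusion. Namely, $f_0\KMS{\ast}$ itself lies in the effective heart, and $\MGL/p\to f_0\KMS{\ast}$ is the effective truncation $\MGL/p\to\HSeff{0}{\MGL/p}$. That is exactly what is needed later to identify $F^{(1)}$ with $\tau^{\mathrm{eff}}_{\geq 1}(\MGL/p)$. The paper's argument is shorter given Bachmann's structural results, since it checks everything on $\HS{\ast}{0}{-}$.
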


\begin{proof}
	Since the functors $\HS{0}{0}{-}:\SH{k}^{\mathrm{eff},\heartsuit}\to\mathrm{Ab}(Shv_{\mathrm{Nis}}(Sm(k)))$ are conservative, see \cite[Proposition 5 1)]{Bachmann_2017}, it suffices to calculate
	\begin{equation*}
		\HS{0}{0}{\iota^\heartsuit\HSeff{i}{f_0\KMS{\ast}}}\simeq \HS{0}{0}{\HS{i}{\ast}{\iota f_0\KMS{\ast}}}\simeq \HS{i}{0}{f_0\KMS{\ast}}.
	\end{equation*}
	From now on we will suppress $\iota$ and $\iota^{\heartsuit}$ in the formulas for clarity.
	
	Using that the suspension spectra are by definition effective, we have
	\begin{align*}
		\HS{i}{0}{f_0\KMS{\ast}}
		&\cong \HS{i}{0}{\KMS{\ast}}
	\end{align*}
	But $\KMS{\ast}\in\SH{k}^{\heartsuit}$, so that these groups vanish for all $i\neq 0$. For $i=0$ we note that there is a map $\MGL/p\to f_0\KMS{\ast}$, induced by $\MGL/p\to \HS{0}{\ast}{\MGL/p}$. Thus, the composition 
	\begin{equation*}
		\HS{0}{0}{\MGL/p}\to\HS{0}{0}{\KMS{\ast}}\cong\HS{0}{0}{\HS{0}{\ast}{\MGL/p}}\cong \HS{0}{0}{\MGL/p}
	\end{equation*}
	is the identity, so that $\HSeff{0}{\MGL/p}\simeq \HSeff{0}{f_0\KMS{\ast}}$, using effectivity of $\MGL$.
\end{proof}
	\section{Quillen's Lemma}
\label{sec: Quillen's lemma}
To calculate the bigraded homotopy groups of $\MGL^\wedge_p$ for some prime $p$, we need control over the elements in these groups. This goal of this section is exactly to develop such a control, using the motivic power operations \Cref{Sec: definitionmotivicpoweroperations}. In fact, we will show that that the motivic power operations for $\MGL$ can be expressed explicitly as a power series.

This whole section follows closely the original proof of Quillen \cite{BordismQuillen}.

\subsection{\texorpdfstring{Calculation of the Power Operations for $\MGL$}{Calculation of the Power Operations for MGL}}

Our precise goal is to calculate the $C_p$-power operations of $\MGL$. We want to reduce this calculation of the $C_p$-power operations to the calculation of a single Chern class. In fact it will be this reduction, besides the definition of the power operations of course, where we need the normed structure of $\MGL$, see \cite[Section 16.3]{NormsBachmannHoyois}.

Let $S$ be a qcqs base scheme, $X\in\Spc{S}^\omega$ a compact motivic space, and $x\in\MGL^{2p,p}(X)$ a cohomology class, which we represent by a morphism $ \Sigma^{-2p,-p}\Sigma_+^{\infty}X\to \MGL$. We define
\begin{equation*}
	\MGL(n)=\Sigma^{-2n,-n}M(\gamma_n),
\end{equation*}
where $\gamma_n$ is the canonical bundle on $\Bet{GL}(n)$, and $M(\gamma_n)=M_S(j\circ\gamma_n)$ is the motivic Thom spectrum of $\gamma_n$, see \cite[Chapter 16]{NormsBachmannHoyois} for the precise definition. By \cite[Theorem 16.13]{NormsBachmannHoyois}, the colimit $\MGL:=\mathrm{colim}\MGL(n)$ carries a normed structure. Furthermore, by \cite[Proposition 6.4 (3)]{hoyois2017six}, the suspension spectrum $\Sigma_+^\infty X$ is compact, hence the morphism representing $x$ factors over a finite stage 
\begin{equation*}
	f:X\to\Omega^\infty\Sigma^{2p,p}\mathrm{MGL}(n)=\Omega^\infty\Sigma^{-2(n+p),-(n+p)}M(\gamma_n)\xrightarrow{\Omega^\infty u_n}\Omega^\infty\Sigma^{2p,p}\MGL    
\end{equation*}
As the power operations are stable under pullback, it suffices to calculate the power operations of the canonical morphisms $u_n:M(\gamma_n)\to \Sigma^{2n,n}\MGL$.

We start with the morphism $u_1:M(\gamma_1)\to \Sigma^{2,1}\MGL$, which is the Thom class of the tautological bundle on the étale classifying stack, see also \cite[Example 16.30]{NormsBachmannHoyois}. Using that $M(\gamma_1)\simeq \Sigma^\infty\Proj^\infty$, see \cite[2.2.5]{degliseorientation}, we identify this morphism with the first Chern class $c_1(\gamma_1)\in \MGL^{2,1}(\mathbb{P}^{\infty})$.


\begin{remark}
	\label{rmk: MGLandPMGLpoweroperations}
	Fix a positive integer $p\in\mathbb{N}$. Before we actually calculate $P_{C_p}(c_1)\in\MGL^{2p,p}(\BGm\times\Bet{C_p})$, we first post-compose the first Chern class with the inclusion $\MGL\to\PMGL$ so that we can consider a morphism $c_1:\Sigma^\infty \BGm\to\PMGL$. Using $\PMGL$ we can get rid of the shifts appearing in the definition of the $\MGL(n)$ which would complicate things unnecessarily later on. By \cite[Theorem 16.19]{NormsBachmannHoyois}, this does not pose a problem. Indeed, given $x\in \MGL^{2n,n}(X)$ and its power operation $P_{C_p}(x)\in\MGL^{2np,np}(X\times \Bet{C}_p)$, then, for $\Tilde{x}\in\PMGL^{0,0}(X)$ and $\Tilde{P}^{C_p}(x)\in\PMGL(X\times \Bet{C_p})$ their respective images under the morphism of cohomology theories induced by the map of normed spectra $\MGL\to\PMGL$, we have $P_{C_p}(\Tilde{x})=\Tilde{P}^{C_p}(x)$. In other words, the morphism of cohomology theories $\MGL^{\ast,\ast}(-)\to\PMGL^{\ast,\ast}(-)$ commutes with power operations. 
\end{remark}

We are now ready to start the computation. Our first goal is to rewrite $P_{C_p}(c_1)$ as the top Chern class of another bundle that is accessible by character theory and the formal group law on $\MGL$.

\begin{proposition}
	We have an equality
	\begin{equation*}
		P_{C_p}(c_1)=c_n(\Delta_{\mathds{h}C_p}^\ast \beta)\in\PMGL^{0,0}(\BGm),
	\end{equation*}
	where $\beta$ is the virtual vector bundle classified by the composition
	\begin{equation*}
		\mathbb{D}_{C_p}(\BGm)\xrightarrow{\mathbb{D}_{C_p}(\gamma_1)}\mathbb{D}_{C_p}(K^{\mathrm{hom}})\to K^{\mathrm{hom}}	
	\end{equation*} 
	where the last morphism is induced by the coherent multiplication on $K^{\mathrm{hom}}$.
\end{proposition}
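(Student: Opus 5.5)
The plan is to identify $P_{C_p}(c_1)$ with the Thom class of a vector bundle, and then to use the fact that pulling back a Thom class along the zero section produces the top Chern class. By \Cref{rmk: MGLandPMGLpoweroperations} it suffices to compute in $\PMGL$. There the first Chern class $c_1$ is the composite
\[
\Sigma^\infty_+\BGm\to\Sigma^\infty_+\BGm\to M(\gamma_1)\to\PMGL,
\]
where the middle map is the zero section $\BGm\to\mathrm{Th}(\gamma_1)$ and the last map is the Thom class. This Thom class is the component of the normed Thom spectrum functor $M:\Spc{-}_{/K^{\mathrm{hom}}}\to\SH{-}$ applied to the object $\gamma_1:\BGm\to K^{\mathrm{hom}}$, composed with the canonical orientation $M(K^{\mathrm{hom}})\to\PMGL$; see \cite[Section 16]{NormsBachmannHoyois}.

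The first step is to use that $M$ is a map of normed categories. This is where the normed slice category $\Spc{-}_{/K^{\mathrm{hom}}}$ constructed above enters. Being normed, $M$ commutes with the enhanced powers, the restriction to $C_p$ and the geometric orbits $(-)_{\mathds{h}C_p}$, since each of these is built from pullbacks, norms along finite étale maps and colimits, all of which $M$ preserves. This gives a natural equivalence
\[
\mathbb{D}_{C_p}(M(\gamma_1))\simeq M\bigl(\mathbb{D}_{C_p}(\BGm)\xrightarrow{\mathbb{D}_{C_p}(\gamma_1)}\mathbb{D}_{C_p}(K^{\mathrm{hom}})\to K^{\mathrm{hom}}\bigr)=M(\beta).
\]
The Thom class $M(K^{\mathrm{hom}})\to\PMGL$ is a map of normed spectra by \cite[Theorem 16.19]{NormsBachmannHoyois}. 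Together with the counit of the free/forgetful adjunction used in the definition of $P^{C_p}_{\mathrm{ext}}$, this shows that $P^{C_p}_{\mathrm{ext}}(u_1)$ is the Thom class $\mathrm{th}(\beta)\colon M(\beta)\to\PMGL$ of the virtual bundle $\beta$.

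The second step pulls back along the diagonal. By \Cref{constr: Diagonal} and the naturality of $M$ in the slice category, precomposing $\mathrm{th}(\beta)$ with $\Delta_{\mathds{h}C_p}$ yields the Thom class of $\Delta_{\mathds{h}C_p}^\ast\beta$ over $\BGm\times\Bet{C_p}$. This uses that Thom classes in an oriented theory are stable under pullback. The zero section of $\gamma_1$ is mapped by $\mathbb{D}_{C_p}$ to the zero section of $\beta$, and after pulling back it becomes the zero section of $\Delta^\ast_{\mathds{h}C_p}\beta$. Composing with this zero section, $P_{C_p}(c_1)$ becomes the Thom class restricted along the zero section, which is the top Chern class $c_n(\Delta_{\mathds{h}C_p}^\ast\beta)$ with $n=p$ the rank. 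This is the standard identity $s^\ast\mathrm{th}(V)=c_{\mathrm{top}}(V)$ for oriented theories, see \cite{degliseorientation}.

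The main obstacle is the first step: the compatibility of the Thom spectrum functor with the geometric orbits $(-)_{\mathds{h}C_p}$. The norms are handled by \cite[Theorem 16.13]{NormsBachmannHoyois}, but compatibility with $(-)\otimes\mathbb{E}C_p{}_+$ followed by $(-)/C_p$ has to be checked separately. I would try to deduce it from the fact that $M$ preserves colimits together with the description of $(-)/G$ in \cite[Appendix C]{MotivicColimits} via quotients of free $G$-schemes. A secondary subtlety is that $\beta$ is only a virtual bundle. Since it is $C_p$-orbits of a $p$-fold external sum of line bundles, I would show that it is represented by an honest rank $p$ bundle, namely the induced bundle $\BGm^{p}\times_{C_p}\mathbb{E}C_p$, so that the top Chern class makes sense.
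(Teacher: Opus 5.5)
Your proposal is correct and follows the paper's proof essentially step for step. Both arguments identify $\mathrm{th}(\beta)$ with $\mathbb{D}_{C_p}(\mathrm{th}(\gamma_1))$, followed by the normed multiplication on $\PMGL$, using that the Thom spectrum functor on the normed slice $\Spc{-}_{/K^{\mathrm{hom}}}$ commutes with $\mathbb{D}_{C_p}$, and then precompose with the zero section and the diagonal and use $s_0^\ast\mathrm{th}(V)=c_{\mathrm{top}}(V)$. The compatibility of $M$ with the geometric orbits, which you flag as the main obstacle, is exactly what the paper takes from \cite[Remark 5.4]{MotivicColimits} rather than verifying by hand.
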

\begin{proof}
	As $\Spc{-}_{/K^{\mathrm{hom}}}$ is a normed category, we have access to the norms functor, hence we can define the extended power functor as done in \cite{MotivicColimits}. Applying the motivic extended power $\mathbb{D}_{C_p}$ to the map $\gamma_1:\BGm\to K^{\mathrm{hom}}$ classifying the canonical bundle over $\BGm$, and using the normed multiplicative structure on $K^{\mathrm{hom}}$, we obtain a new morphism in $\mathbb{D}_{C_p}(\BGm)\to\mathbb{D}_{C_p}(K^{\mathrm{hom}})\in\Spc{S}_{/K^{\mathrm{hom}}}$, that is, a commutative diagram in $\Spc{S}$
	\[\begin{tikzcd}
		{\mathbb{D}_{C_p}(\BGm)} & & {\mathbb{D}_{C_p}(K^{\mathrm{hom}})} \\
		& & K^{\mathrm{hom}}
		\arrow["{\mathbb{D}_{C_p}(\gamma_1)}", from=1-1, to=1-3]
		\arrow["\beta"', from=1-1, to=2-3]
		\arrow[from=1-3, to=2-3]
	\end{tikzcd}\]
	Applying the motivic Thom spectrum functor, we obtain a commuting diagram in $\SH{S}$
	\[\begin{tikzcd}
		{M(\mathbb{D}_{C_p}(\BGm))} & & {M(\mathbb{D}_{C_p}(K^{\mathrm{hom}}))} \\
		& & {M(K^{\mathrm{hom}})}
		\arrow["{M(\mathbb{D}_{C_p}(\gamma_1))}", from=1-1, to=1-3]
		\arrow["{\mathrm{th}(\beta)}"', from=1-1, to=2-3]
		\arrow[from=1-3, to=2-3]
	\end{tikzcd}\]
	where we write $\mathrm{th}(\beta)$ for the Thom class of $\beta$.
	
	On the other hand we could also first apply the motivic Thom spectrum functor and then the extended powers, in which case we would have ended up with the diagram
	\[\begin{tikzcd}
		{\mathbb{D}_{C_p}(M(\BGm))} & & {\mathbb{D}_{C_p}(\PMGL)} \\
		& & \PMGL
		\arrow["{\mathbb{D}_{C_p}M(\gamma_1)}", from=1-1, to=1-3]
		\arrow["{\mathbb{D}_{C_p}(\mathrm{th}(\gamma_1))}"', from=1-1, to=2-3]
		\arrow[from=1-3, to=2-3]
	\end{tikzcd}\]
	in $\SH{S}$.
	
	Now we can use that by \cite[Remark 5.4]{MotivicColimits} the extended powers $\mathbb{D}_{C_p}$ and the Thom spectrum functor $M$ commute, identifying the two diagrams. We obtain an equivalence of morphisms
	\begin{equation*}
		\mathrm{th}(\beta)\simeq \mathbb{D}_{C_p}(\mathrm{th}(\gamma_1)).
	\end{equation*}
	Precomposition with the zero-section $s_0:\mathbb{D}_{C_p}(\Sigma^\infty_+\BGm)\to \mathbb{D}_{C_p}M(\BGm)$ and diagonal $\Delta_{\mathds{h}C_p}:(\Sigma^\infty_+\BGm)_{\mathds{h}C_n}\to \mathbb{D}_{C_p}(\BGm)$, we obtain an equality of cohomology classes
	\begin{equation*}
		\Delta_{\mathds{h}C_p}^\ast c_n(\beta)=c_n(\Delta_{\mathds{h}C_p}^\ast \beta)=
		P_{C_p}(c_1(\gamma_1))\in\PMGL^{0,0}((\BGm)_{\mathds{h}C_p}).
	\end{equation*}
	Here we used that the pullback of the Thom class along the zero section of a bundle recovers the top Chern class of that bundle, see \cite[Remark 2.4.5]{degliseorientation}.
\end{proof}
\begin{remark}
	We can visualize the equality of the cohomology classes by the following commutative diagram:
	\[\begin{tikzcd}[column sep = tiny, row sep = small]
		& {M(\BGm\times\Bet{C_p})} & {\mathbb{D}_{C_p}(\BGm)} \\
		{\sus(\BGm\times\Bet{C_p})} &&& {\mathbb{D}_{C_p}\PMGL} & \PMGL \\
		& {\mathbb{D}_{C_p}(\sus{\BGm\times\Bet{C_p}})} & {M\mathbb{D}_{C_p}(\BGm)}
		\arrow["\Delta", from=1-2, to=1-3]
		\arrow["{\mathbb{D}_{C_p}(\mathrm{th}(\gamma_1))}", from=1-3, to=2-4]
		\arrow["{s_0}", from=2-1, to=1-2]
		\arrow["\Delta"', from=2-1, to=3-2]
		\arrow["{\mu_\otimes}", from=2-4, to=2-5]
		\arrow["{s_0}"', from=3-2, to=3-3]
		\arrow["{\mathrm{th}(\beta)}"', from=3-3, to=2-4]
	\end{tikzcd}\]
	The upper composition exactly represents $P_{C_p}(c_1)$, while the lower composition represents $c_n(\Delta_{\mathds{h}C_p}^\ast \beta)$.
\end{remark}

\begin{proposition}
	There exists a $C_p$-equivariant isomorphism
	\begin{equation*}
		\gamma_1^{\oplus p}\ \cong\  \gamma_1\otimes \pi^\ast \rho,
	\end{equation*}
	where $\pi:\BGm\to \ast$ is the projection to the point, and $\rho$ is induced by the permutation representation of $C_p$, considers as a vector bundle over the point, that is, the stack represented by the base scheme $S$.
\end{proposition}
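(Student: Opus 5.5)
The plan is to build the isomorphism first on the level of $C_p$-equivariant vector bundles over $\BGm$ with trivial $C_p$-action, where both sides are defined, and then observe that it is natural, so it descends to the stack or étale classifying space. Write $\rho=\mathcal{O}_S^{\oplus p}=\bigoplus_{i\in\Z/p}\mathcal{O}_S e_i$, with $C_p=\langle\sigma\rangle$ acting by $\sigma e_i=e_{i+1}$. The bundle $\gamma_1^{\oplus p}=\bigoplus_{i\in\Z/p}\gamma_1$ carries the $C_p$-action permuting the summands cyclically. This is the action coming from the restriction to $\Delta:\BGm\to(\BGm)^{\times p}$ of the external product $\gamma_1^{\boxtimes}$ on the $p$-fold product with its permutation action, as in \Cref{constr: Diagonal}. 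Concretely, the pullback $\Delta^\ast(\mathrm{pr}_1^\ast\gamma_1\oplus\cdots\oplus\mathrm{pr}_p^\ast\gamma_1)$, with $C_p$ permuting the projections, is $\gamma_1^{\oplus p}$ with the cyclic permutation action.

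The isomorphism itself is the canonical distributivity map
\begin{equation*}
	\gamma_1\otimes\pi^\ast\rho=\gamma_1\otimes\bigoplus_{i}\mathcal{O}e_i\xrightarrow{\ \cong\ }\bigoplus_i\gamma_1,\qquad v\otimes e_i\mapsto (0,\ldots,v,\ldots,0)\ \text{in slot } i.
\end{equation*}
This is an isomorphism of vector bundles because tensor product distributes over direct sum and $\mathcal{O}$ is the unit. For equivariance, $C_p$ acts on the source only through $\rho$, since $\gamma_1$ has trivial action. So $\sigma(v\otimes e_i)=v\otimes e_{i+1}$, which maps to $v$ placed in slot $i+1$. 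That is exactly $\sigma$ applied to $v$ in slot $i$ under the cyclic permutation action on $\gamma_1^{\oplus p}$. Both sides are described by functorial constructions in the line bundle $L$: for any scheme $T$ with a line bundle $L$ we get $L\otimes\rho_T\cong L^{\oplus p}$. The map is therefore natural in $(T,L)$ and compatible with pullbacks, so it defines an isomorphism of bundles over the stack $\BGm$, i.e.\ over the presheaf/étale sheaf of line bundles. It is also an isomorphism of $C_p$-equivariant objects, i.e.\ of the induced classes in $K^{\mathrm{hom}}$ of $\BGm\times\Bet{C_p}$.

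The main obstacle is bookkeeping rather than mathematics. One must check that the bundle $\Delta_{\mathds{h}C_p}^\ast\beta$ from the previous proposition, defined through normed multiplication on $K^{\mathrm{hom}}$, really is the descent along $C_p$ of the equivariant bundle $\gamma_1^{\oplus p}$ with permutation action. For this I would argue as follows:
\begin{itemize}
	\item The norm $p_\otimes$ along the fold map $S^{\amalg p}\to S$ on $K$ sends a bundle on $S^{\amalg p}$, i.e.\ a $p$-tuple $(V_1,\ldots,V_p)$, to $V_1\otimes\cdots\otimes V_p$ (\cite[Section 15]{NormsBachmannHoyois}).
	\item The composition with the Thom spectrum functor turns this into a direct sum. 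That is, the relevant $K$-theory class in the slice-over-$K^{\mathrm{hom}}$ formalism uses the additive $\mathbb{E}_\infty$-structure, so $\mathbb{D}_{C_p}(\gamma_1)$ composed with the multiplication classifies $\boxplus_i\mathrm{pr}_i^\ast\gamma_1$ with permutation action.
\end{itemize}
If needed, I would verify this on the level of groupoids of vector bundles, where everything is a strict symmetric monoidal functor, and then invoke the identification from the preceding construction. Once this identification is in place, the displayed map above finishes the proof.
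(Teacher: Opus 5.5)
Your proposal is correct and is essentially the paper's argument: you build the canonical distributivity isomorphism $\mathcal{L}^{\oplus p}\cong\mathcal{L}\otimes\mathcal{O}_X^{\oplus p}$ on $X$-valued points, note that it is $C_p$-equivariant and compatible with base change, and glue over the stack $\BGm$, with the only addition being that you write out the equivariance check the paper leaves implicit. Your extra paragraph identifying $\Delta_{\mathds{h}C_p}^\ast\beta$ with $\gamma_1^{\oplus p}$ is not needed for this statement, and its first bullet is off: the norm on $K$ that is compatible with Thom spectra is the additive pushforward, so it already sends $(V_1,\ldots,V_p)$ to $V_1\oplus\cdots\oplus V_p$; it does not produce a tensor product that the Thom spectrum functor would then turn into a direct sum.
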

\begin{proof}
	We use that for a group $G$, the $G$-equivariant line bundles on the stack $\BGm$ are given by
	\begin{equation*}
		\mathrm{Vect}_1(\BGm)^G\simeq \mathrm{Fun}(\mathrm{B}G,\mathrm{Vect}_1(\BGm))\simeq \mathrm{lim}_{X\to\BGm}\mathrm{Vect}_1(X)^{G}.
	\end{equation*}
	Thus, to construct the desired isomorphism, we will construct an isomorphism of $C_p$-equivariant bundles on any schemes $X$, that is, on $X$-valued points, such that they glue.
	
	The points on the right hand side are given by $\mathcal{L}^{\oplus p}$, that is, by $p$-fold direct sums of a line bundle $\mathcal{L}$ on $X$, with the $C_p$-action given by permuting the factors. A point on the right hand side is $\mathcal{L}\otimes \mathcal{O}_X^{\oplus p}$, where $\mathcal{L}$ carries the trivial $C_p$-action, while $C_p$ acts on $\mathcal{O}_X^{\oplus p}$ by permuting the factors. But now we have the canonical, $C_p$ equivariant isomorphism
	\begin{equation*}
		\mathcal{L}^{\oplus p}\cong  \mathcal{L}\otimes \mathcal{O}_X^{\oplus p}
	\end{equation*}
	obtained from the fact that the tensor product commutes with direct sums. Since this isomorphism is an canonical, it is compatible with the base change of vector bundles. Hence this isomorphism glues to the desired isomorphism of vector bundles on the stack $\BGm$.
\end{proof}

The bundle $\rho$ is now accessible by character theory. In fact we have, $C_p$ being cyclic,
\begin{equation*}
	\rho\ \cong\ \bigoplus_{i=0}^{p-1}\chi^{\otimes i}
\end{equation*}
where $\chi$ is the generating character of $C_p$. We want to descend this isomorphism along the homotopy orbits. That this is possible is content of the next lemma.
\begin{lemma}
	Let $G$ be a finite group and let $V\to\mathcal{X}$ be a $G$-equivariant vector bundle over an algebraic stack, where $\mathcal{X}$ has the trivial action. Then $V_{\mathds{h}G}$ defines a vector bundle over $\mathcal{X}_{\mathds{h}G}=\mathcal{X}\times \Bet{G}$. Furthermore, this construction is compatible with direct sums and tensor products.
\end{lemma}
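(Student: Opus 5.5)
The plan is to work entirely on the level of stacks via descent. A $G$-equivariant vector bundle $V\to\mathcal{X}$, with $\mathcal{X}$ carrying the trivial action, is the same as a vector bundle on the quotient stack $[\mathcal{X}/G]$. Since the action on $\mathcal{X}$ is trivial, this quotient is $\mathcal{X}\times\mathrm{B}G$, and after étale sheafification it is $\mathcal{X}\times\Bet{G}$. Concretely, I will use the identification
\begin{equation*}
	\mathrm{Vect}(\mathcal{X})^{G}\simeq \mathrm{Fun}(\mathrm{B}G,\mathrm{Vect}(\mathcal{X}))\simeq \mathrm{Vect}(\mathcal{X}\times\Bet{G}),
\end{equation*}
exactly as in the proof of the preceding proposition. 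The second equivalence comes from étale descent for vector bundles ($\mathrm{Vect}$ is an étale sheaf of groupoids) together with the fact that $\mathcal{X}\to\mathcal{X}\times\Bet{G}$ is a $G$-torsor whose Čech nerve is $\mathcal{X}\times G^{\bullet}$. I then define $V_{\mathds{h}G}$ as the image of $V$ under this equivalence. Explicitly, it is the descended bundle $(V\times \mathbb{E}G)/G\to (\mathcal{X}\times\mathbb{E}G)/G$, where $\mathbb{E}G$ is the geometric universal $G$-space, so it matches the geometric homotopy orbits from \Cref{def: extended powers}.

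The key steps, in order, are as follows.

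\emph{Step 1.} Reduce to $X$-valued points. Both sides are limits over $X\to\mathcal{X}$, so it suffices to give the construction functorially for $G$-equivariant bundles on schemes $X$ with trivial action, compatibly with base change.

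\emph{Step 2.} Check that a $G$-equivariant bundle on $X$ with trivial action gives descent data along the $G$-torsor $X\times \mathbb{E}G\to (X\times\mathbb{E}G)/G$. The bundle $V\times\mathbb{E}G$ has a free diagonal action, so it descends. One needs that $\mathbb{E}G$ is a colimit of schemes with free action (e.g.\ opens in representations), and that the quotients exist as algebraic spaces.

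\emph{Step 3.} Compatibility with $\oplus$ and $\otimes$. The equivalence $\mathrm{Fun}(\mathrm{B}G,\mathrm{Vect}(\mathcal{X}))\simeq\mathrm{Vect}(\mathcal{X}\times\Bet{G})$ is symmetric monoidal for both operations. Pullback along the cover $\mathcal{X}\to\mathcal{X}\times\Bet{G}$ is symmetric monoidal for $\oplus$ and $\otimes$, and it is conservative and detects the equivalence. Equivalently, descent commutes with these operations because they are computed locally and commute with pullback. So $(V\oplus W)_{\mathds{h}G}\cong V_{\mathds{h}G}\oplus W_{\mathds{h}G}$ and $(V\otimes W)_{\mathds{h}G}\cong V_{\mathds{h}G}\otimes W_{\mathds{h}G}$ canonically.

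I expect the main obstacle to be Step 2 together with the identification $\mathcal{X}_{\mathds{h}G}=\mathcal{X}\times\Bet{G}$. Here one must match the geometric homotopy orbits (a motivic colimit over $\mathbb{E}G$) with the stack quotient. My first attempt is to invoke the comparison in \cite[Appendix C]{MotivicColimits}, which says that $\Sigma^\infty_+(\mathbb{E}G)/G$ is the motivic localization of $\Bet{G}$. Since $\mathrm{Vect}$ is not $\A^1$-invariant in general, I would state the lemma at the level of stacks, where descent is clean. I would only pass to motivic spaces afterwards, through the classifying map to $K^{\mathrm{hom}}$, which is $\A^1$-invariant and a Nisnevich sheaf. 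In that setting the factorization through the motivic localization is automatic.
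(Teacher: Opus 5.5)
Your proposal is correct and follows essentially the same route as the paper. The paper's proof likewise reduces the lemma to comparing the geometric (motivic) homotopy orbits with the étale homotopy orbits, i.e.\ the quotient stack $[\mathcal{X}/G]\simeq\mathcal{X}\times\Bet{G}$, where the vector bundle and its compatibility with $\oplus$ and $\otimes$ come from étale descent. The paper simply cites the argument of \cite[Proposition 5.23]{MotivicColimits}, with extended powers replaced by homotopy orbits, for that comparison, whereas you spell out the descent and the monoidality explicitly and point to \cite[Appendix C]{MotivicColimits} for the comparison step.
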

\begin{proof}
	It suffices to show that the motivic homotopy orbits agree on étale sheaves with the étale homotopy orbits. This is essentially the proof of \cite[Proposition 5.23]{MotivicColimits}, with the extend powers replaced by just the homotopy orbits.
\end{proof}
Thus, descending along homotopy orbits, we obtain
\begin{align*}
	\Delta^\ast_{\mathds{h}C_p}\beta\cong \bigoplus_{i=0}^{p-1}(\pi_1^\ast\gamma_1)\otimes\eta^{\otimes i}
\end{align*}
where $\eta=\pi_2^\ast(\chi_{\mathds{h}C_p})$ is the tautological line bundle over $\Bet{C_p}$, and $\pi_1$ and $\pi_2$ are the canonical projections of $\BGm\times\Bet{C_p}$. In summary, we obtain the equation,
\begin{equation*}
	P_{C_p}(c_1(\gamma_1))=c_p\left(\bigoplus_{i=0}^{p-1}(\pi_1^\ast\gamma_1)\otimes\eta^{\otimes i}\right)\in\PMGL^{0,0}((\BGm\times \Bet{C_p}),
\end{equation*}
using that by \cite[Lemma C.6]{MotivicColimits}, if $X$ has the trivial action, then $\sus X_{\mathds{h}C_p}\simeq(\sus X\otimes\Sph)_{\mathds{h}C_p}\simeq \sus(X\times\Bet{C_p})$.

By \Cref{rmk: MGLandPMGLpoweroperations}, the last equation is already true in $\MGL$-cohomology. We obtain, using multiplicativaty of the top Chern class and the formal group law on $\MGL$:
\begin{align*}
	P_{C_p}(c_1(\gamma_1))&=c_p(\Delta^{\ast}\beta)\\
	&=\prod_{i=0}^{p-1}(c_1(\gamma_1)+_{\MGL}[i]_{\MGL}(c_1(\eta))).
\end{align*}
Here, $+_{\MGL}$ is the addition for the formal group law on $\MGL$, that is, if the formal group law is given by the power series $f\in\MGL^{\ast,\ast}[[x,y]]$, then $x+_{\MGL} y$, while $[i]_{\MGL}$ is the multiplication with $i$ in the formal group law, that is $[i]_{\MGL}(x)=x+_{\MGL}x+_{\MGL}\ldots+_{\MGL}x$, where $x$ appears $i$-times on the right hand side.

By expanding out the formal group law, we obtain the following proposition.
\begin{proposition}
	\label{prop: formulaforfirstchernclass}
	There exists a universal formula for the power operation of the first Chern class
	\begin{equation*}
		P_{C_p}(c_1(\gamma_1))=c_1(\gamma_1)\sum_{|\alpha|\leq 1}w^{1-|\alpha|}a_\alpha(t) c_\alpha(\gamma_1),
	\end{equation*}
	where 
	\begin{equation*}
		w=\prod_{i=1}^{p-1}[i]_{\MGL}(t)
	\end{equation*}
	is the top Chern class of the reduced permutation representation. Furthermore, $t=c_1(\eta)$ is the top Chern class of the tautological bundle $\eta$ and the $a_\alpha(t)=\prod_{j=1}^\infty a_j(t)^{\alpha_j}$ are power series with coefficients in the subring of $\pi_\ast(\MGL)_\ast$ generated the coefficients of the formal group law on $\MGL$.
\end{proposition}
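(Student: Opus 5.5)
Starting from the product formula just derived,
\begin{equation*}
	P_{C_p}(c_1(\gamma_1))=\prod_{i=0}^{p-1}\bigl(x+_{\MGL}[i]_{\MGL}(t)\bigr),\qquad x:=c_1(\gamma_1),\ t:=c_1(\eta),
\end{equation*}
I will expand each factor in powers of $x$ and then collect terms. The $i=0$ factor is $x$, which produces the leading $c_1(\gamma_1)$. For $1\le i\le p-1$, write the formal group law as $F(x,y)=x+y+\sum_{k,l\ge 1}a_{kl}x^ky^l$. Expanding in $x$ gives
\begin{equation*}
	x+_{\MGL}[i](t)=[i](t)+\sum_{j\ge 1}b_j([i](t))\,x^j,
\end{equation*}
where $b_j(y)=\partial_x^jF(0,y)/j!$, so $b_1(y)=1+\sum_l a_{1l}y^l$. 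All $b_j$ have coefficients in the subring generated by the $a_{kl}$. Factoring out $[i](t)$ gives $[i](t)\bigl(1+\sum_{j\ge1}b_j([i](t))[i](t)^{-1}x^j\bigr)$. Since $[i](t)=it+O(t^2)$ is not invertible, I will not divide by it at this stage. Instead I group the terms as follows: the product of the constant terms is $\prod_{i=1}^{p-1}[i](t)=w$.

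Next I rewrite the product as a symmetric expression in the Chern roots. Here the key observation is that $x$ is the only Chern root of the line bundle $\gamma_1$, so $c_\alpha(\gamma_1)=x^{j}$ when $\alpha=e_j$ (a single variable of exponent $j$), $c_0=1$, and $c_\alpha(\gamma_1)=0$ whenever $|\alpha|\ge 2$. For a line bundle, the whole expansion in powers of $x$ is therefore automatically a sum over $c_\alpha(\gamma_1)$ with $|\alpha|\le 1$. The plan is to prove the formula for a general sum of line bundles and then specialize, as in Quillen. Concretely, I set $f(x)=\prod_{i=1}^{p-1}(x+_{\MGL}[i](t))$, a power series in $x$ with $f(0)=w$. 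Expanding $f(x)=\sum_{j\ge0}f_j(t)x^j$ gives $P_{C_p}(c_1)=x\sum_j f_j(t)x^j$, with $f_0=w$, matching the term $\alpha=0$ where $w^{1-0}a_0=w$.

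For $\alpha=e_j$ the required exponent is $w^{1-|\alpha|}=w^0=1$, so I only need to set $a_{e_j}(t):=f_j(t)$ and define $a_j(t):=f_j(t)$. Then $a_\alpha=\prod_j a_j^{\alpha_j}$ is consistent on $|\alpha|\le1$. Each $f_j$ is a polynomial expression in the $b_k([i](t))$ and the $[i](t)$, so its coefficients lie in the subring generated by the formal group law coefficients, as the statement requires.

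The main obstacle is that the statement is intended to match Quillen's normalized form, in which $w^{1-|\alpha|}$ with negative exponents appears after inverting $w$. I expect that the honest content is the bookkeeping that the $a_j$ are genuine power series in $t$ rather than Laurent series, and that they remain so in the multiplicative extension to $\gamma_1^{\oplus n}$. Verifying that the $a_j$ are genuine power series is the step I would check carefully. Here it is immediate because only $|\alpha|\le1$ occurs, so no negative powers of $w$ are introduced.
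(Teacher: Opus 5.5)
Your proposal is correct and is essentially the paper's argument: write $P_{C_p}(c_1(\gamma_1))=\prod_{i=0}^{p-1}(x+_{\MGL}[i](t))=x\,f(x)$, use $c_{e_j}(\gamma_1)=x^j$ and $c_\alpha(\gamma_1)=0$ for $|\alpha|\geq 2$, and take $a_j(t)$ to be the coefficient of $x^j$ in $f$, so that $f_0=w$. Your expansion via the $b_j$ is in fact more careful than the paper's: its displayed formula for $a_\alpha$ is a symmetric expression in the $[j](t)$ alone and so drops the nonlinear terms of the formal group law. Your worry about negative powers of $w$ is also unnecessary, since in the multiplicative extension each factor $w+\sum_j a_j x_k^j$ contributes either $w$ or a single term $a_jx_k^j$, which gives $w^{n-|\alpha|}$ with $|\alpha|\leq n$.
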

\begin{proof}
	The formula follows from the fact that multiplication distributes over the addition in the formal group law. By definition $c_\alpha(\gamma_1)$ is, for $|\alpha|=1$, a fixed power $c_1^{i}(\gamma_1)$ where $i$ is the place of the unique non-zero entry in $\alpha$. Further, $a_\alpha(t)$ is, again for $|\alpha|=1$, the power series 
	\begin{equation*}
		\underset{j_n\neq j_m \forall m,n}{\sum_{j_1,\ldots, j_{p-1}}}[j_1]_{\PMGL}(t)[j_2]_{\PMGL}(t)\cdots [j_{p-i}]_{\PMGL}(t)
	\end{equation*}
	where $i$ is as above.
\end{proof}
Before finishing the calculation of the universal class $u_1$ (we still have to remove the isomorphism coming from the zero section), we use \Cref{prop: formulaforfirstchernclass} to calculate all power operations $P_{C_p}(c_n(\gamma_n))\in\MGL(\Bet{GL(m)})$ and then remove all the zero section isomorphisms in one fell swoop to obtain the corresponding formulas for $P_{C_p}(u_n)$, where $u_n$ is the Thom class of the universal rank $n$ bundle on the classifying stack $\Bet{GL(n)}$.

\begin{remark}
	\label{exmpl: twisted oriented cohomology}
	For the next proposition, we remark that the following. If $F$ is any spectrum, $E$ an oriented homotopy ring spectrum, so is the twisted cohomology $E^F:=\mathrm{map}(F,E)$, with the orientation given by the composition $\MGL\to E\simeq \mathrm{map}(\Sph,E)\to\mathrm{map}(F,E)$. Here, the last map is induced by the unique morphism $F\to \Sph$. Notice that for any spectrum $Y$, we can describe the twisted cohomology as
	\begin{equation*}
		(E^F)^{\ast,\ast}(Y)=\pi_0\mathrm{map}(Y,\Sigma^{\ast,\ast} E^F)=\pi_0\mathrm{map}(Y,\Sigma^{\ast,\ast}\mathrm{map}(F,E))\cong\pi_0\mathrm{map}(Y\otimes F,\Sigma^{\ast,\ast}E)=E^{\ast,\ast}(F\otimes Y).
	\end{equation*}
	It follows from \cite[Proposition 6.2]{naumann2009motiviclandweberexactness}, that if $X$ is a scheme, or more generally a motivic space
	\begin{equation*}
		E^{\ast,\ast}(X\times \Bet{\mathrm{GL}(n)})\cong(E^X)^{\ast,\ast}(\mathrm{GL}(n))\cong (E^X)^{\ast,\ast}[[c_1,\ldots,c_n]]\cong E^{\ast,\ast}(X)[[c_1,\ldots,c_n]],
	\end{equation*}
	where we abbreviated $E^X:=E^{\sus X}$.
	
	For any oriented ring spectrum $E$ the image of the inclusion
	\begin{equation*}
		E^{\ast,\ast}[[c_1,\ldots,c_n]]\cong E^{\ast,\ast}(\Bet{\mathrm{GL}(n)})\hookrightarrow E^{\ast,\ast}(\BGm^{\times n})\cong E^{\ast,\ast}[[x_1,\ldots,x_n]],
	\end{equation*}
	can be identified with the subring of symmetric power series on the right hand side. By the usual Milnor sequence argument, it follows that the image of the inclusion 
	\begin{equation*}
		E^{\ast,\ast}[[c_1,\ldots]]\hookrightarrow E^{\ast,\ast}[[x_1,\ldots]],
	\end{equation*}
	is also given by the symmetric power series.
	
	From these facts, we conclude that the morphism $\iota^\ast:E^{\ast,\ast}(X\times \Bet{\mathrm{GL}(n)})\to E^{\ast,\ast}(X\times \Bet{\mathrm{GL}(1)}^{\times n})$ is injective for all motivic spaces $X$.
	
	Let $E$ be again a complex oriented ring spectrum. Consider the Thom isomorphism 
	\begin{equation*}
		\varphi:E^{\ast,\ast}(\Bet{\mathrm{GL}}(n))\xrightarrow{\cong} E^{\ast,\ast}(\MGL(n)),	
	\end{equation*}
	which is explicitly given by multiplication with the Thom class $\mathrm{th}(\gamma_n)$. Denote the zero-section of the canonical line bundle $\gamma_n$ over $\Bet{\mathrm{GL}(n)}$ by $s_0$. The pullback along $s_0$ is injective in $E^{\ast,\ast}$-cohomology. Indeed, using the Thom isomorphism, and that $s_0^\ast(\mathrm{th}(\gamma_n))=c_n(\gamma_n)$, we see that the composition
	\begin{equation*}
		E^{\ast,\ast}(\Bet{\mathrm{GL}}(n))\xrightarrow{\varphi}E^{\ast,\ast}(\MGL(n))\xrightarrow{s_0^\ast}E^{\ast+2n,\ast+n}(\Bet{\mathrm{GL}(n)})
	\end{equation*}
	is given by the multiplication by $c_n(\gamma_n)$, which is a non-zero divisor in the power series ring $E^{\ast,\ast}(\Bet{\mathrm{GL}(n)})\cong E^{\ast,\ast}[[c_1,\ldots,c_n]]$. As $\varphi$ is an isomorphism, $s_0^\ast$ is injective.
\end{remark}

\begin{proposition}
	\label{prop: formulafornththomclass}
	For any $n\in\mathbb{N}$ there exists a universal formula
	\begin{equation*}
		P_{C_p}(u_n)=\sum_{|\alpha|\leq n}w^{n-|\alpha|}a_\alpha(t) s_\alpha(u_n)\in\MGL^{2np,np}(\Bet{GL(n)}\times\Bet{C_p})
	\end{equation*}
	where $s_\alpha$ are the motivic Landweber-Novikov operations, see \Cref{defn: Landwebernovikovoperationsmotivically}, and the $a_\alpha$ are power series with coefficients in $C\subseteq \MGL^{\ast,\ast}$, the subring generated by the coefficients of the formal group law on $\MGL$.
\end{proposition}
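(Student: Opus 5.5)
We follow Quillen's strategy: first reduce to line bundles via the splitting principle, then remove the zero-section isomorphism using the injectivity statements of \Cref{exmpl: twisted oriented cohomology}.

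\textbf{Reduction to the top Chern class.} Precompose with the zero section $s_0$ of $\gamma_n$ over $\Bet{GL(n)}$, with $\Bet{C_p}$ carried along as a trivial factor. By naturality of $P_{C_p}$ under pullback we have $(s_0\times\mathrm{id})^\ast P_{C_p}(u_n)=P_{C_p}(s_0^\ast u_n)=P_{C_p}(c_n(\gamma_n))$. For the right-hand side we use property 5) of the Landweber--Novikov operations, $\varphi(c_\alpha(\gamma_n))=s_\alpha(u_n)$, together with $s_0^\ast u_n=c_n$. These give $s_0^\ast s_\alpha(u_n)=c_n(\gamma_n)c_\alpha(\gamma_n)$. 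The claimed formula is therefore equivalent, after applying $s_0^\ast$, to
\begin{equation*}
P_{C_p}(c_n(\gamma_n))=c_n(\gamma_n)\sum_{|\alpha|\leq n}w^{n-|\alpha|}a_\alpha(t)c_\alpha(\gamma_n)\in\MGL^{\ast,\ast}(\Bet{GL(n)}\times\Bet{C_p}).
\end{equation*}
To make this a genuine reduction we need $s_0^\ast$ to be injective after crossing with $\Bet{C_p}$. We get this by applying the argument of \Cref{exmpl: twisted oriented cohomology} to the twisted theory $E=\MGL^{\Bet{C_p}}$. That theory is oriented, $\mathrm{th}(\gamma_n)$ gives a Thom isomorphism for it, and multiplication by $c_n$ is a non-zero divisor in $E^{\ast,\ast}[[c_1,\dots,c_n]]$.

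\textbf{Splitting principle.} Pull back further along $\iota:\Bet{GL(1)}^{\times n}\times\Bet{C_p}\to\Bet{GL(n)}\times\Bet{C_p}$. This map is injective on $\MGL$-cohomology by the same remark, again using the twisted theory with $X=\Bet{C_p}$. Under $\iota$, $c_n(\gamma_n)$ becomes $x_1\cdots x_n$ with $x_i=c_1(\gamma_1^{(i)})$. The power operation is multiplicative (\Cref{rmk: multiplicativity PG}) and natural, so
\begin{equation*}
\iota^\ast P_{C_p}(c_n)=\prod_{i=1}^n P_{C_p}(x_i)=\prod_{i=1}^n x_i\sum_{j\ge0} b_j(t)x_i^{j},
\end{equation*}
where \Cref{prop: formulaforfirstchernclass} gives each factor as $x_i(w+\sum_{j\ge1}a_j(t)x_i^j)$ with $b_0=w$, $b_j=a_j$. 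Expand the product and group monomials by exponent pattern. A term using exponent $j\ge1$ for exactly $\alpha_j$ of the indices and exponent $0$ for the remaining $n-|\alpha|$ indices contributes $w^{n-|\alpha|}\prod_j a_j(t)^{\alpha_j}$. Summing over the ways to choose indices gives the monomial symmetric function in $x_1,\dots,x_n$, which is exactly $\iota^\ast c_\alpha(\gamma_n)$ by \Cref{defn: Landwebernovikovoperationsmotivically}. Hence
\begin{equation*}
\iota^\ast P_{C_p}(c_n)=\iota^\ast\Bigl(c_n\sum_{|\alpha|\le n}w^{n-|\alpha|}a_\alpha(t)c_\alpha\Bigr).
\end{equation*}
Injectivity of $\iota^\ast$ then gives the formula on $\Bet{GL(n)}\times\Bet{C_p}$, and injectivity of $s_0^\ast$ gives it for $u_n$. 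The coefficients of the $a_\alpha$ lie in $C$ because they are products of those of the $a_j$.

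\textbf{Main obstacle.} The delicate step is justifying the two injectivity statements in the presence of the extra $\Bet{C_p}$ factor, and checking that $P_{C_p}$ really commutes with the relevant pullbacks. For $\iota$ this is just functoriality of $\mathbb{D}_{C_p}$ and the diagonal. For $s_0$ one must check that $P_{C_p}$ on $M(\gamma_n)$ is compatible with $P_{C_p}$ on $\Sigma^\infty_+\Bet{GL(n)}$, the shifts being handled as in \Cref{rmk: MGLandPMGLpoweroperations}. The remaining points are the Milnor-sequence identification $\MGL^{\ast,\ast}(\Bet{C_p}\times\Bet{GL(n)})\cong\MGL^{\ast,\ast}(\Bet{C_p})[[c_1,\dots,c_n]]$ and that $c_n$ is a non-zero divisor there; both follow from the twisted-cohomology remark applied to $X=\Bet{C_p}$, which is not compact, so I would handle it by writing it as a colimit and checking that the $\lim^1$ terms vanish.
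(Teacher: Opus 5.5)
Your proposal is correct and follows essentially the same route as the paper. The paper also sets up the commutative diagram with the injective maps $s_0^\ast$ and $\iota^\ast$, using the twisted-cohomology remark with $X=\Bet{C_p}$. It computes $P_{C_p}(\prod_i c_1(\xi_i))$ by multiplicativity and the first-Chern-class formula, regroups the result as $c_n\sum_{|\alpha|\le n}w^{n-|\alpha|}a_\alpha(t)c_\alpha$, and transports it back through the Thom isomorphism via $\varphi(c_\alpha)=s_\alpha(u_n)$. Your explicit monomial-symmetric-function bookkeeping and your caution about the non-compactness of $\Bet{C_p}$ (the $\lim^1$ check) only make precise points that the paper passes over quickly.
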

\begin{proof}	
	Denote the Thom isomorphism by $\varphi:\MGL^{\ast,\ast}(\Bet{\mathrm{GL}}(n))\xrightarrow{\cong} \MGL^{\ast,\ast}(\MGL(n))$. Using \Cref{exmpl: twisted oriented cohomology} with $E=\MGL$ and $X=S=\ast\in\Spc{S}$, respectively $X=\Bet{C_p}$, and the naturality of $P_{C_p}$, we obtain the following commutative diagram.
	\[\begin{tikzcd}[column sep = small]
		{\MGL^{\ast,\ast}(\mathrm{M}\mathrm{GL}(n)\otimes\Sigma_+^\infty\Bet{C_p})} & {\MGL^{\ast,\ast}(\Bet{\mathrm{GL}}(n)\times\Bet{C_p})} & {\MGL^{\ast,\ast}(\Bet{\mathrm{GL}}(1)^{\times n}\times\Bet{C_p})} \\
		{\MGL^{\ast,\ast}(\mathrm{M}\mathrm{GL}(n))} & {\MGL^{\ast,\ast}(\Bet{\mathrm{GL}}(n))} & {\MGL^{\ast,\ast}(\Bet{\mathrm{GL}}(1)^{\times n}),}
		\arrow["{s_0^\ast}", hook, from=1-1, to=1-2]
		\arrow["{\iota^\ast}", hook, from=1-2, to=1-3]
		\arrow["{P_{C_p}}"', from=2-1, to=1-1]
		\arrow["{s_0^\ast}", hook, from=2-1, to=2-2]
		\arrow["{P_{C_p}}"', from=2-2, to=1-2]
		\arrow["{\iota^\ast}", hook, from=2-2, to=2-3]
		\arrow["{P_{C_p}}"', from=2-3, to=1-3]
	\end{tikzcd}\]
	where all horizontal morphisms are injective. It follows from injectivity, that it suffices to calculate $P_{C_p}$ on the image of $u_m$ in the lower right corner, that is, on
	\begin{equation*}
		\iota^\ast s_0^\ast u_m=\iota^\ast c_n(\gamma_n)=\prod_{i=1}^nc_1(\xi_i)\in\MGL^{\ast,\ast}(\mathrm{B}U(1)^{\times n}),
	\end{equation*}
	where we write $\xi_i:=\pi_i^\ast \gamma_1$. Here we used that the pullback bundle $\iota^\ast\gamma_n$ splits as a product of line bundles on $\Bet{\mathrm{GL}}(1)^{\times n}$, so that we can use splitting principle, i.e.\! the multiplicativity of the Chern classes in short exact sequences of bundles, $\iota^\ast c_n(\gamma_n)=\prod_{i=1}^nc_1(\xi_i)$, see \cite[Paragraph 1.2.10]{deglise2024characteristic} and \cite[page 51]{fulton1998intersection}.
	
	We now calculate
	\begin{align*}
		\iota^\ast s_0^\ast P_{C_p}(u_n)&=P_{C_p}\left(c_1(\xi_1)\cdots c_1(\xi_n)\right)\\
		&=P_{C_p}\left(c_1(\xi_1)\right)\cdots (P_{C_p}\left(c_1(\xi_n)\right)\\
		&=(c_1(\xi_1)\sum_{|\alpha|\leq 1}w^{1-|\alpha|}a^{\alpha}(t)c_\alpha(\xi_1))\ldots(c_1(\xi_n)\sum_{|\alpha|\leq 1}w^{1-|\alpha|}a^{\alpha}(t)c_\alpha(\xi_n))\\
		&=c_n(\xi_1\times\ldots\times \xi_n)\sum_{|\alpha|\leq m}w^{m-\alpha}a_{\alpha}(t)c_\alpha(\xi_1\times\ldots\times \xi_n)\\
		&=c_n(\iota^\ast\gamma_n)\sum_{|\alpha|\leq m}w^{m-\alpha}a_{\alpha}(t)c_\alpha(\gamma_n)\\
		&=\iota^\ast(c_n(\gamma_n)\sum_{|\alpha|\leq m}w^{m-\alpha}a_{\alpha}(t)c_\alpha(\gamma_n)).
	\end{align*}
	Here we used the multiplicativity of the power operation.
	
	After canceling $\iota^\ast$ due to injectivity, we see that, by the last paragraph of \Cref{exmpl: twisted oriented cohomology}, the factor $c_n(\gamma_n)$ implies that the sum 
	\begin{equation*}
		\sum_{|\alpha|\leq m}w^{m-\alpha}a_{\alpha}(t)c_\alpha(\gamma_n)
	\end{equation*}
	is in the image of $s_0\circ\varphi$, hence originates from $\MGL^{\ast,\ast}(\Bet{\mathrm{GL}}(n)\times\Bet{C_p})$, that is
	\begin{equation*}
		\sum_{|\alpha|\leq m}w^{m-\alpha}a_{\alpha}(t)c_\alpha(\gamma_n)\in \MGL^{\ast,\ast}(\Bet{\mathrm{GL}}(n)\times\Bet{C_p}).
	\end{equation*}
	Commutativity of the diagram and injectivity of all involved maps implies that this sum is mapped, under the Thom isomorphism, to $P_{C_p}(u_m)$. We obtain the desired equality
	\begin{align*}
		P_{C_p}(u_n)&=\varphi\left(\sum_{|\alpha|\leq n}w^{n-\alpha}a_{\alpha}(t)c_\alpha(\gamma_n)\right)\\
		&=\sum_{|\alpha|\leq n}w^{n-\alpha}a_{\alpha}(t)s_\alpha(u_n).
	\end{align*}
\end{proof}

\subsection{Stabilizing the Power Operations}
The next step in the topological proof is to study how the power operation behaves with respect to suspension, so let us also study suspensions next. 
\begin{proposition}
	\label{prop: stabilityofpoweroperations}
	Let $X$ be a compact object in $\Spc{S}_\ast$ and $x\in \MGL^{2n,n}(X)$. Then there is a suspension relation for the power operation
	\begin{equation*}
		P_{C_p}(\sigma^{n}_{\mathbb{T}}x)=\sigma^n_{\mathbb{T}}w^{n}P_{C_p}(x)\in \MGL^{2np,np}(X\times\Bet{C_p})
	\end{equation*}
	where $\sigma_{\mathbb{T}}$ is the suspension homomorphism coming from the Tate motive $\mathbb{T}=S^{2,1}$.
\end{proposition}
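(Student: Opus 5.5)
The plan is to reduce the suspension relation to the computation of the power operation on a single Thom class, exactly as in Quillen's topological argument. The suspension $\sigma_{\mathbb{T}}x$ is the exterior product of $x$ with the fundamental class $\iota\in\widetilde{\MGL}^{2,1}(\mathbb{T})$, the class of the map $\mathbb{T}\simeq S^{2,1}\to\Sigma^{2,1}\MGL$. Since $\mathbb{T}$ is the Thom space of the trivial line bundle over the base, $\iota$ is the Thom class $u_1$ of the trivial bundle $\mathcal{O}_S$, i.e.\ the pullback of the universal Thom class $u_1\in\MGL^{2,1}(M(\gamma_1))$ along the map $\mathbb{T}=M(\mathcal{O})\to M(\gamma_1)$ classifying $\mathcal{O}$. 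Iterating, $\sigma^n_{\mathbb{T}}x=x\times\iota^n$, where $\iota^n$ is the Thom class of $\mathcal{O}^{n}$, pulled back from $u_n$ along $\mathbb{T}^{\wedge n}=M(\mathcal{O}^n)\to \MGL(n)$.

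\textbf{Step 1 (external multiplicativity).} I will first upgrade the multiplicativity of \Cref{rmk: multiplicativity PG} from cup products to exterior products: $P_{C_p}(x\times y)=P_{C_p}(x)\times P_{C_p}(y)$, with the $\Bet{C_p}$-factors identified along the diagonal. The argument is the same diagram chase using the oplax monoidality of $\mathbb{D}_{C_p}$, applied to $\Sigma^\infty X\otimes\Sigma^\infty Y$ instead of pulling back along the diagonal of $X$. This gives $P_{C_p}(\sigma^n_{\mathbb{T}}x)=P_{C_p}(x)\cdot P_{C_p}(\iota^n)$ in $\MGL^{\ast,\ast}((X\wedge\mathbb{T}^{\wedge n})\times\Bet{C_p})$.

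\textbf{Step 2 (power operation of the Thom class of a trivial bundle).} By naturality of $P_{C_p}$ under the map $M(\mathcal{O}^n)\to\MGL(n)$ and \Cref{prop: formulafornththomclass}, we have
\begin{equation*}
P_{C_p}(\iota^n)=\sum_{|\alpha|\leq n}w^{n-|\alpha|}a_\alpha(t)s_\alpha(\iota^n).
\end{equation*}
By property 5) of the Landweber--Novikov operations, $s_\alpha(\iota^n)=\varphi(c_\alpha(\mathcal{O}^n))$. For a trivial bundle all Chern roots vanish, since $c_1(\mathcal{O})=0$ and the classifying map factors through the point. Hence $c_\alpha(\mathcal{O}^n)=0$ for $\alpha\neq 0$, and only the $\alpha=0$ term survives. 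Using $a_0=1$ (the empty product), this yields $P_{C_p}(\iota^n)=w^n\iota^n$. Alternatively, and as a consistency check, one can argue directly: $P_{C_p}(\iota)$ is the Thom class of the descended bundle $\rho_{\mathds{h}C_p}$ over $\Bet{C_p}$. Its top Chern class is $\prod_{i=0}^{p-1}[i](t)$, which after dividing off the trivial summand leaves the factor $w$.

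\textbf{Step 3 (conclusion).} Combining the two steps gives $P_{C_p}(\sigma^n_{\mathbb{T}}x)=w^n\,\sigma^n_{\mathbb{T}}P_{C_p}(x)$, which is the claimed relation. Compactness of $X$ is used, as in the preceding propositions, so that $x$ factors through a finite stage $\MGL(m)$ and naturality applies.

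\textbf{Main obstacle.} I expect the hardest part to be the bookkeeping in Steps 1 and 2: identifying $\mathbb{D}_{C_p}(\mathbb{T})$ restricted along the diagonal with the Thom space of $\rho_{\mathds{h}C_p}$ over $\Bet{C_p}$, and making sure the equivalence $\mathbb{D}_{C_p}(\Sigma^{2,1}E)\simeq\Sigma^{2p,p}\mathbb{D}_{C_p}(E)$ used in defining $P_{C_p}$ is compatible with this identification. A mismatch there would produce an extra unit or a different power of $w$. I would first try to deduce this compatibility from \cite[Remark 5.4]{MotivicColimits}, i.e.\ the commutation of $M$ with $\mathbb{D}_{C_p}$, applied to $\mathcal{O}\to K^{\mathrm{hom}}$, precisely as in the proof expressing $P_{C_p}(c_1)$ as a top Chern class.
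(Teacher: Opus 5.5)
Your proposal is correct and follows the paper's proof. You pull back the universal formula of \Cref{prop: formulafornththomclass} along $\mathbb{T}^{\wedge n}\to M(\gamma_n)$ to get $P_{C_p}(\iota_{2n,n})=w^n\iota_{2n,n}$, then conclude from multiplicativity of $P_{C_p}$ applied to $\sigma^n_{\mathbb{T}}x=\iota_{2n,n}\wedge x$. The only inessential difference is how the terms with $|\alpha|>0$ are killed: you use $s_\alpha(\mathrm{th}(\mathcal{O}^n))=\mathrm{th}(\mathcal{O}^n)\,c_\alpha(\mathcal{O}^n)=0$, whereas the paper notes that these terms lie in $\MGL^{2n+2|\alpha|,n+|\alpha|}(S^{2n,n})$, which vanishes by connectivity of $\MGL$.
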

\begin{proof}
	Consider the unit $1\in\MGL^{0,0}(S^{0,0})$. Suspending this class $n$-times we obtain $\iota_{2n,n}=\iota_{2n,n}^{\ast}u_n\in\MGL^{2n,n}(S^{2n,n})$, represented by a morphism
	\begin{equation*}
		\mathbb{T}^{n}\xrightarrow{\iota_{2n,n}}M(\Bet{GL}(n))\xrightarrow{u_n}\Sigma^{2n,n}\MGL.
	\end{equation*}
	Applying the power operation to this class, and using that $P_{C_p}$ is compatible with pullbacks, we obtain
	\begin{equation*}
		P_{C_p}(\iota_{2n,n}^{\ast}x_n)=\iota_{2n,n}^{\ast}P_{C_p}(x_n)\overset{\ref{prop: formulafornththomclass}}{=}\iota_{2n,n}^{\ast}\sum_{|\alpha|\leq n}w^{n-|\alpha|}a_\alpha s_\alpha(x_n)=w^n\iota_{2mn,mn}.
	\end{equation*}
	Here we use that $\iota^\ast_{2n,n}s_\alpha(x_n)=s_\alpha(\iota^\ast_{2n,n}(x_n))\in \MGL^{2n+2|\alpha|,n+|\alpha|}(S^{2n,n})$, which is zero for $|\alpha|>0$ by connectivity of $\MGL$, see \cite[Corollary 3.9]{hoy15}.
	
	Now take an $x\in\MGL^{2n,n}(X)$ as in the statement of the proposition. Then we have $\sigma_{\mathbb{T}}^nx=\iota_{2n,n}\wedge x$ so the claim follows from the multiplicativity of the power operations, \Cref{rmk: multiplicativity PG}.
\end{proof}
This is again an almost stability statement for $P_{C_p}$, equivalent to the topological case.

\begin{proposition}
	\label{Prop: GeneralformulaforPCp}
	Let $X\in\Spc{S}_\ast$ be a compact, pointed motivic space and $x\in\widetilde{\MGL}^{2n,n}(X)$. Then there is a formula for $m\gg 0$
	\begin{equation*}
		w^mP_{C_p}(x)= \sum_{|\alpha|\leq n+m}w^{n+m-|\alpha|}a_\alpha(t) s_\alpha(x)\in\MGL^{2(np+m-1),np+m-1}(X\times\Bet{C_p}).
	\end{equation*}
\end{proposition}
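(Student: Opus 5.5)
We follow Quillen's original argument: reduce an arbitrary reduced class to a pullback of a Thom class $u_{N}$, apply \Cref{prop: formulafornththomclass}, and use \Cref{prop: stabilityofpoweroperations} to absorb the desuspension.

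\textbf{Step 1: represent a suspension of $x$ by a Thom class.} Since $X$ is compact, $\Sigma^\infty X$ is compact (by \cite[Proposition 6.4 (3)]{hoyois2017six}). Write $\MGL=\mathrm{colim}_N\,\Sigma^{-2N,-N}M(\gamma_N)$. Then for $m\gg0$ the suspended class $\sigma^m_{\mathbb{T}}x\in\widetilde{\MGL}^{2(n+m),n+m}(\mathbb{T}^{m}\wedge X)$ factors as
\begin{equation*}
	\sigma^m_{\mathbb{T}}x=f^\ast u_{n+m},
\end{equation*}
for some map $f:\Sigma^\infty(\mathbb{T}^m\wedge X)\to M(\gamma_{n+m})$. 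This is the same compactness argument as at the start of the section, now with the shift absorbed by $\mathbb{T}^m$. This is where the condition $m\gg0$ enters, and we should also need $n+m\ge0$.

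\textbf{Step 2: compute $P_{C_p}(\sigma^m_{\mathbb{T}}x)$ in two ways.} Naturality of $P_{C_p}$, \Cref{prop: formulafornththomclass}, and naturality of the $s_\alpha$ give
\begin{equation*}
	P_{C_p}(\sigma^m_{\mathbb{T}}x)=(f\times\mathrm{id})^\ast P_{C_p}(u_{n+m})=\sum_{|\alpha|\le n+m}w^{n+m-|\alpha|}a_\alpha(t)\,s_\alpha(\sigma^m_{\mathbb{T}}x).
\end{equation*}
Since the Landweber--Novikov operations are stable, $s_\alpha(\sigma^m_{\mathbb{T}}x)=\sigma^m_{\mathbb{T}}s_\alpha(x)$. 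On the other hand, \Cref{prop: stabilityofpoweroperations}, applied with the suspension degree $m$ as in its proof via $\iota_{2m,m}\wedge x$ and multiplicativity, gives
\begin{equation*}
	P_{C_p}(\sigma^m_{\mathbb{T}}x)=\sigma^m_{\mathbb{T}}\bigl(w^mP_{C_p}(x)\bigr).
\end{equation*}

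\textbf{Step 3: desuspend.} Equate the two expressions. The suspension isomorphism $\sigma^m_{\mathbb{T}}$ on $\MGL^{\ast,\ast}(-\times\Bet{C_p})$ is linear over $\MGL^{\ast,\ast}(\Bet{C_p})$, which contains $w$, $t$ and the $a_\alpha(t)$. This linearity is the twisted-cohomology viewpoint of \Cref{exmpl: twisted oriented cohomology} with $E^{\Bet{C_p}}$. Cancelling $\sigma^m_{\mathbb{T}}$ yields the claimed identity.

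\textbf{Main obstacle.} The delicate point is the bookkeeping of suspensions against the product with $\Bet{C_p}$. One must justify that $P_{C_p}$ of a class on $\mathbb{T}^m\wedge X$ lands in $\mathbb{T}^{m}\wedge(X\times\Bet{C_p})$-cohomology in a way compatible with $\sigma^m_{\mathbb{T}}$ and with module multiplication by classes pulled back from $\Bet{C_p}$. This is also what makes the pointed/reduced setting necessary. I would handle it by working throughout with the external description in the remark after the definition of $P_{C_p}$, together with \cite[Lemma C.6]{MotivicColimits}, which identifies $(\Sigma^\infty X\otimes\Sph)_{\mathds{h}C_p}$ with $\Sigma^\infty X\otimes\Sigma^\infty_+\Bet{C_p}$. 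The degree bookkeeping stated in the proposition would then be checked a posteriori against this.
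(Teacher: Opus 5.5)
Your three steps are the paper's argument: represent $\sigma^m_{\mathbb{T}}x$ as $f^\ast u_{n+m}$, compute $P_{C_p}(\sigma^m_{\mathbb{T}}x)$ once by \Cref{prop: stabilityofpoweroperations} and once by \Cref{prop: formulafornththomclass}, and cancel $\sigma^m_{\mathbb{T}}$. However, Step~1 as you state it does not support Step~2. You produce $f$ only as a map of spectra $\Sigma^\infty(\mathbb{T}^m\wedge X)\to M(\gamma_{n+m})$, and then use $P_{C_p}(f^\ast u_{n+m})=(f\times\mathrm{id})^\ast P_{C_p}(u_{n+m})$. That identity is only available for maps of pointed motivic spaces. $\mathbb{D}_{C_p}$ is functorial in stable maps, but the last step in the definition of $P_{C_p}$ is the diagonal of \Cref{constr: Diagonal}. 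That diagonal is a natural transformation on $\Spc{S}_\bullet$ and has no stable counterpart. Naturality along stable maps is in fact false. Take the stable map $2\colon\Sph\to\Sph$ and $1\in\MGL^{0,0}(S)$: naturality would give $P_{C_p}(2)=2P_{C_p}(1)$, whose restriction along $\iota$ is $2$. But the remark $\iota^\ast\circ P_{C_p}=(-)^p$ gives $\iota^\ast P_{C_p}(2)=2^p$, and $2^p\neq 2$ (e.g.\ over a field, where $\MGL^{0,0}=\Z$). Power operations are unstable operations, and \Cref{prop: stabilityofpoweroperations} is precisely the measurement of that instability. So the stable-versus-unstable issue, not the bookkeeping with $\Bet{C_p}$ you single out, is the delicate point of this proof. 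Invoking ``the same compactness argument as at the start of the section'' does not help, since that argument only produces a map into $\Omega^\infty$ of a finite stage, i.e.\ again stable data.

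What is needed, and what the paper's proof uses (``an unstable morphism $f\colon\Sigma^{2m,m}X\to M(\Bet{GL(n+m)})$''), is unstable representability. The Thom spaces $\mathrm{Th}(\gamma_N)$, with structure maps $\mathbb{T}\wedge\mathrm{Th}(\gamma_N)\to\mathrm{Th}(\gamma_{N+1})$, form a $\mathbb{T}$-prespectrum representing $\MGL$. The space $\Sigma^\infty X$ is compact, and for compact $A$ one has $[\Sigma^\infty A,\Sigma^\infty B]=\mathrm{colim}_j[\mathbb{T}^j\wedge A,\mathbb{T}^j\wedge B]$ in $\SH{S}$. Together these give $\widetilde{\MGL}^{2n,n}(X)\cong\mathrm{colim}_m[\mathbb{T}^m\wedge X,\mathrm{Th}(\gamma_{n+m})]$, with unstable pointed homotopy classes on the right. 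This yields a map of pointed motivic spaces $f\colon\mathbb{T}^m\wedge X\to\mathrm{Th}(\gamma_{n+m})$ with $f^\ast u_{n+m}=\sigma^m_{\mathbb{T}}x$. With this $f$, your Steps~2 and~3 go through verbatim. Your remark that $\sigma^m_{\mathbb{T}}$ is $\MGL^{\ast,\ast}(\Bet{C_p})$-linear, so that $w$ and the $a_\alpha(t)$ pass through it, is something the paper uses without comment. Finally, since $w\in\MGL^{2(p-1),p-1}$, the identity lives in bidegree $(2(np+m(p-1)),np+m(p-1))$; the bidegree printed in the statement appears to be a typo.
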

\begin{proof}
	Let $x:\sus X\to \Sigma^{2n,n}\MGL$. As $X$ is compact by assumption, there exists an $m\gg0$ and an unstable morphism
	\begin{equation*}
		f:\Sigma^{2m,m} X\to M(\Bet{GL(n+m)}).
	\end{equation*}
	This morphism satisfies that
	\begin{equation*}
		f^\ast x_{n+m}=\sigma_{\mathbb{T}}^{m}x\in\MGL^{\ast,\ast}(X)
	\end{equation*}
	where $x_{n+m}$ is still the Thom class of the tautological bundle on $\Bet{GL(n+m)}$. Now we simply calculate using Proposition \ref{prop: stabilityofpoweroperations} and Proposition \ref{prop: formulafornththomclass}
	\begin{align*}
		w^{m}\sigma_{\mathbb{T}}^mP_{C_p}(x)&=P_{C_p}(\sigma_{\mathbb{T}}^mx)\\
		&=P_{C_p}(f^\ast x_{n+m})\\
		&=f^\ast P_{C_p}( x_{n+m})\\
		&=f^{\ast} \left(\sum_{|\alpha|\leq n+m}w^{n+m-|\alpha|}a_\alpha(t) s_\alpha(x_{n+m})\right)\\
		&=\sigma_{\mathbb{T}}^{m}\sum_{|\alpha|\leq n+m}w^{n+m-|\alpha|}a_\alpha(t) s_\alpha(x)
	\end{align*}
	which was the claim, using that $\sigma_{\mathbb{T}}^{m}:\MGL^{\ast,\ast}(X)\to \MGL^{\ast+2m,\ast+m}(\Sigma^{2m,m}X)$ is an isomorphism.
\end{proof}
\begin{remark}
	\label{rmk: Havettorsion}
	At this point we should point out, that even though this looks like a nice formula, it does not give us a complete solution. Notice that the power series $a_\alpha$ all depend on the top Chern class $t$ of the tautological bundle $\eta$, which lives over $\Bet{C_p}$. So we need to understand the $t$-torsion in $\Bet{C_p}$.
\end{remark}
	\subsection{\texorpdfstring{Oriented Cohomology of the stack $\Bet{\mu_p}$}{Oriented Cohomology of the Stack Betmup}}
We now want to prove a motivic version of Quillen's ``technical lemma'', see \cite[Section 4]{BordismQuillen}. 

\begin{definition}
	Let $x$ be a coordinate of a formal group law $F$. Then we define the \emph{divided n-series} $\langle n\rangle(x)$ by the equality
	\begin{equation*}
		x\ \cdot\langle n\rangle_F(x)=[n]_F(x).
	\end{equation*}
	If the formal group law under consideration is clear from the context, we suppress the $F$.
\end{definition}

To get control over the $t$-dependence mentioned in \Cref{rmk: Havettorsion}, we need to replace $C_p$ with the group scheme of $p$-th roots of unity $\mu_p$.

To be able to do so, we will from now on assume that our base scheme $S=\mathrm{Spec}(k)$ is the spectrum of a field $k$ containing a primitive $p$-th root of unity $\zeta$. We can hence identify $C_p$ with $\mu_p$, using an explicit choice of a primitive $p$-th root of unity. In particualr, the action of $C_p$ on $\Gm$, and hence on $\A^1$, is given by the character $C_p\cong \mu_p\hookrightarrow\Gm$ and is hence given by multiplication with the chosen primitive $p$-th root of unity.

The restriction to fields containing a primitive $p$-th root of unity does not pose a great loss for us, since we are later going to assume the base field to even have all primitive $p$-th power roots of unity. Notice, that $\zeta\in k$ implies that the characteristic of $k$ is not $p$.

Throughout the rest of the section, denote $U_n:=(\A^n\setminus\{0\})$.
\begin{lemma}
	\label{lem: Cptorsordividedpseries}
	Let $n\geq 1$ and consider the $\mu_p$-torsor $f:U_n\to U_n/\mu_p\in\Sm_{k}$, where we consider $U_n$ with the diagonal $\mu_p$-action coming from the character $\mu_p\hookrightarrow \Gm$ and the usual action of $\Gm$ on $\A^1$ by left multiplication. Let $L:=U_n\times_{\mu_p}\A^1:=(U_n\times\A^1)/\mu_p\to U_n/\mu_p$ be the associated line bundle, where the action on $\A^1$ is again determined by the character $\mu_p\hookrightarrow\Gm$, and the action on the product $U_n\times \A^1$ is given by the diagonal action. Let $E$ be an incoherently normed oriented ring spectrum. Then we obtain
	\begin{equation*}
		f_\ast(1)=\langle p\rangle_E(t_n) \in E^{2,1}(U_n/\mu_p),
	\end{equation*}
	where $t_n$ is the Euler class of the canonical quotient bundle $U_n\to U_n/\mu_p$.
\end{lemma}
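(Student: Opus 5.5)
We prove this as in Quillen's topological argument: factor the finite étale map $f$ through the line bundle $L$ and compute the pushforward of $1$ via the zero locus of a section. The key geometric input is that the $p$-th tensor power of $L$ is trivial. Note that $L^{\otimes p}\cong U_n\times_{\mu_p}\A^1$ with $\mu_p$ acting on $\A^1$ through the $p$-th power of the character, which is trivial. The plan has four steps.

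First, we write $f$ as the zero locus of a section of $L^{\otimes p}$. The map $U_n\to L$, $v\mapsto [v,v_1]$, does not work globally, so we use the map $U_n\to L$ sending $v$ to the class of $(v,1)$. This gives a closed embedding identifying $U_n$ with the locus $\{[v,s]: s^p=1\}$ in the total space of $L$. The function $[v,s]\mapsto s^p$ is a well-defined $\mu_p$-invariant, so $U_n$ is the zero locus of the section $\sigma:=s^p-1$ of the trivial bundle $\pi^\ast L^{\otimes p}$ on $L$. This section is regular because $\mathrm{char}(k)\neq p$, so $x^p-1$ is separable. Write $\pi:L\to U_n/\mu_p$ for the projection, so that $f=\pi\circ i$ with $i$ the closed immersion above.

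Second, we compute $i_\ast(1)$. For the closed immersion $i$ cut out by a regular section of the line bundle $M=\pi^\ast L^{\otimes p}$, the standard formula for oriented theories (Déglise's orientation theory) gives $i_\ast(1)=c_1(M)$. We must keep track of the fact that this section is $s^p-1$ rather than a section of $L$ itself. The cleaner approach is to work in the projective completion $\Proj(L\oplus\mathcal{O})$ with the section $s^p-z^p$ of $\mathcal{O}(p)$. Its zero locus is the union of the $p$ translates $s=\zeta^i z$, and since $\zeta\in k$ each translate is a section of the projective bundle. So $f$ factors as $\bar\pi\circ \bar i$, where $\bar i$ is the embedding of the divisor $D=\{s^p=z^p\}$. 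Then $f_\ast(1)=\bar\pi_\ast(c_1(\mathcal{O}(p)))$, because $\bar i_\ast(1)=c_1(\mathcal{O}(D))=c_1(\mathcal{O}(p))$. To identify $\mathcal{O}(p)$ correctly we use the canonical identification $\mathcal{O}(1)\cong \mathcal{O}(1)\otimes\bar\pi^\ast(\text{trivial})$ obtained from the triviality of $L^{\otimes p}$.

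Third, we compute $\bar\pi_\ast$ by the projective bundle formula. Let $\xi=c_1(\mathcal{O}(1))$. Then
\begin{equation*}
	c_1(\mathcal{O}(p))=[p]_E(\xi)=\xi\cdot\langle p\rangle_E(\xi),
\end{equation*}
using the relation $\xi(\xi-_E t_n)=0$ with $t_n=c_1(L)$. Applying the standard formula for the pushforward of a power series in $\xi$ along a $\Proj^1$-bundle (Quillen's residue formula, in the motivic form of Déglise), the pushforward of $\xi\,g(\xi)$ equals $g(t_n)$ up to the sign and normalisation conventions of the formal group law. This yields $f_\ast(1)=\langle p\rangle_E(t_n)$, with bidegree $(2,1)\cdot(p-1)$ shifted appropriately by the degree of $\langle p\rangle$.

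The main obstacle is the second step: checking that the pushforward for a finite étale map, defined through the (incoherent) norm and transfer structure, agrees with the Gysin pushforward from the orientation used in the factorisation $f=\bar\pi\circ\bar i$. We will deduce this from the compatibility of transfers with the six-functor Gysin maps for oriented theories. We also need to carefully fix the identification of the relevant Euler class with $t_n$ rather than $-t_n$, which we will settle by the choice of character.
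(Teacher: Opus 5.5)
Your argument is correct in substance, and it differs from the paper only in the last step. Both proofs embed $U_n$ into the total space of $L$ by $v\mapsto[v,1]$, pass to the projective completion $\mathbb{P}=\mathbb{P}(L\oplus\mathcal{O})$, and realise $U_n$ there as the transverse zero locus of a section of $\mathcal{O}(p)\cong(\mathcal{O}(1)\otimes\bar\pi^{\ast}L)^{\otimes p}$, using that $L^{\otimes p}$ is trivial. So its class is $[p]_E$ of a first Chern class.

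The paper then pushes forward along the zero section into $E^{\ast,\ast}(\mathbb{P},\infty)\cong\tilde{E}^{\ast,\ast}(\mathrm{Th}(L))$. It writes $(s_0)_\ast f_\ast(1)=j_\ast(1)=[p]_E((s_0)_\ast 1)=(s_0)_\ast\langle p\rangle_E(t_n)$ by the projection formula, and cancels $(s_0)_\ast$ because it is the Thom isomorphism. You instead use that $f=\bar\pi\circ\bar i$ holds on the nose and compute $\bar\pi_\ast$.

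Your route has two advantages:
\begin{itemize}
\item It avoids the paper's unexplained identity $(s_0)_\ast f_\ast(1)=j_\ast(1)$. The maps $v\mapsto[v,0]$ and $v\mapsto[v,1]$ into $\mathbb{P}$ are different, so that identity needs a deformation argument with supports away from $\infty$.
\item It avoids applying $[p]_E$ to relative classes.
\end{itemize}
The paper's route instead gets $t_n$ directly as $s_0^\ast(s_0)_\ast(1)=e(L)$, with no sign issue.

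Your worry about transfers versus Gysin maps is not a real obstacle. The paper also works with the six-functor pushforwards from the cited Euler class paper, for which the finite étale pushforward is the usual transfer and pushforwards compose.

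Three things need fixing.

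\begin{enumerate}
\item The claim that $\{s^p=z^p\}$ is the union of the $p$ sections $s=\zeta^i z$ is false. Here $s$ and $z$ are sections of the different line bundles $\mathcal{O}(1)\otimes\bar\pi^\ast L$ and $\mathcal{O}(1)$, so ``$s=\zeta^iz$'' has no meaning without trivialising $L$. In any case the divisor is $D\cong U_n$, which is connected and maps to $U_n/\mu_p$ by the non-split torsor $f$. If it were a union of $p$ sections you would get $f_\ast(1)=p$. Delete that sentence; $f=\bar\pi\circ\bar i$ already follows from your first step.

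\item No residue formula is needed, and the sign cannot be settled \emph{by the choice of character}, since the character is fixed in the statement. Since $\xi=c_1(\mathcal{O}(1))=(i_\infty)_\ast(1)$ for the section at infinity and $\bar\pi\circ i_\infty=\mathrm{id}$, the projection formula gives exactly $\bar\pi_\ast(\xi\, g(\xi))=g(i_\infty^\ast\xi)$. But the section at infinity has normal bundle $L^\vee$, so what you actually obtain is $\langle p\rangle_E([-1]_E(t_n))$.

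This equals $\langle p\rangle_E(t_n)$ because of your own observation that $[p]_E(t_n)=c_1(L^{\otimes p})=0$. Writing $[-1]_E(x)=x\,u(x)$, one has $\langle p\rangle_E([-1]_E(t))=\langle p\rangle_E(t)\,u([p]_E(t))/u(t)$ as power series. At $t=t_n$ the last factor is $1+t_nh(t_n)$, and $t_n\langle p\rangle_E(t_n)=0$.

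More simply, use $\xi'=c_1(\mathcal{O}(1)\otimes\bar\pi^\ast L)=(s_0)_\ast(1)$. Then $\bar i_\ast(1)=[p]_E(\xi')=(s_0)_\ast\langle p\rangle_E(t_n)$, and $\bar\pi_\ast(s_0)_\ast=\mathrm{id}$ finishes the proof. This is the paper's computation with $\bar\pi_\ast$ replacing the injectivity of $(s_0)_\ast$.

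\item Both $f_\ast(1)$ and $\langle p\rangle_E(t_n)$ live in $E^{0,0}$. The $E^{2,1}$ in the statement is a typo, and no degree shift is involved.
\end{enumerate}
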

\begin{proof}
	We follow closely Quillen's original argument \cite[Proposition 4.2]{BordismQuillen}. We implicitly use the functor-of-points point of view, in particular to define mpas. Notice that in this case, the maps have to be defined section wise in such a way, that they can be glued.
	
	Before we delve into the proof, we first note that the quotients $U_n/\mu_p$, where $\mu_p$ acts via the canonical action given by left multiplication, and $U_n\times_{\mu_p}\Gm:=(U_n\times\Gm)/\mu_p$, as well as $U_n\times_{\mu_p}\A^1$, where $\mu_p$ acts in both cases via the diagonal action, are all schemes for $n>0$. This is true by \cite[III.3.12, Proposition 18]{serre1988algebraic}, since $\mu_p$ is finite, we are working over a field, and all schemes we take the quotient of are varieties.
	
	As the action of $\mu_p$ on $U_n$, given by the left multiplication on each factor, is free there is an equivalence $U_n/\mu_p\simeq U_n\sslash\mu_p$, where $(-)\sslash\mu_p$ denotes the homotopy quotient in étale sheaves. Thus, the $\mu_p$ torsor $U_n\to U_n/\mu_p$ corresponds to a $\mu_p$ equivariant map $U_n\times \mu_p\to U_n$, and similarly $L\to U_n/\mu_p$ corresponds to a $\mu_p$ equivariant map $\A^1_{U_n}=U_n\times\A^1\to U_n$. 
	
	We have a canonical, $\mu_p$ equivariant morphism\footnote{This corresponds to a map $j:U_n\to L$, where points in the scheme $L$ are then given by $j(q)z$, with $q\in U_n$, $z\in \A^1$, satisfying the relation $j(q)(\zeta z)=j(\sigma q)z$, where $\sigma$ is a generator of $\mu_p$ and $\zeta$ is a primitive $p$-th root of unity. This is exactly the morphism used in Quillen's original proof.} $j:U_n\times\mu_p\hookrightarrow \A^1_{U_n}$ given by inclusion of the second factor. We write points in $\A^1_{U_n}$ as $(q,z)$, with $q\in U_n$ and $z\in\A^1$. We further denote by $s_0:U_n\to\A^1_{U_n}$ the zero section, and by $p:\A^1_{G_m}\to\Gm$ the projection.
	
	Consider now the pullback bundle 
	\begin{equation*}
		p^\ast \A^1_{U_n}= \A^1_{U_n}\times_{U_n}\A^1_{U_n}=\A^1_{\A^1_{U_n}}\to \A^1_{U_n}.	
	\end{equation*}
	This admits a tautological section $s:\A^1_{U_n}\to p^\ast \A^1_{U_n}$, given by the relative diagonal $s=\Delta_{p}$. We point out, that the vanishing locus of $s$ is given by $s_0(U_n)$, as can be seen directly from the description $s(q,z)=((q,z),(q,z))\in p^\ast \A^1_{U_n}$.
	
	Consider the $p$-fold tensor product $(\A^1_{\A^1_{U_n}})^{\otimes p}$ which is again a $\mu_p$-equivariant line bundle over $\A^1_{U_n}$. We explicitly define a $\mu_p$-equivariant section as
	\begin{equation*}
		t:\A^1_{U_n}\to (p^\ast \A^1_{U_n})^{\otimes p}, \quad (q,z)\mapsto ((q,z),((q,z)^{\otimes p}-(q,1)^{\otimes p}).
	\end{equation*}
	This defines a well-defined morphism of schemes, as the section is polynomial and hence can be glued. From the explicit description, we can deduce the vanishing locus of $t$ to be the image of $j$, since the vanishing locus is exactly given by solutions to the equation $z^p-1$ in the field $k$.
	
	We now extend $\A^1_{U_n}$ to $\mathbb{P}^1_{U_n}$ by adding the point at infinity. We claim that we can extend $p^\ast \A^1_{U_n}$ to a $\mu_p$-equivariant vector bundle\footnote{As the vector bundle was extended by the usual gluing diagram for $\mathbb{P}^1$ we see that the locally free sheaf corresponding to the line bundle $M$ is given by $\mathcal{O}(1)$.} $M$ over $\mathbb{P}_{U_n}^1$, with transition morphism induced by the section $s$. But this is clear: We take $L_1:=\A^1_{U_n}\times(\mathbb{P}^1_{U_n}\setminus \infty)$ and  $L_2:=\A^1_{U_n}\times(\mathbb{P}^1_{U_n}\setminus 0)$, and define the isomorphism $\varphi:L_1|_{\mathbb{G}_m}\to L_2|_{\mathbb{G}_m}$ by $\varphi(z,x):=(z,\tfrac{x}{z})$.
	
	We can extend the bundle $(p^\ast \A^1_{U_n})^{\otimes p}$ over $\mathbb{P}^1_{U_n}$ to the $\mu_p$-equivariant bundle $M^{\otimes p}$ in a similar fashion. Further, the section $t$ extends to a section $t:\mathbb{P}^1_{U_n}\to M^{\otimes p}$. In summary, we obtain the following $\mu_p$-equivariant diagram:
	\[\begin{tikzcd}
		& {(p^\ast\A^1_{U_n})^{\otimes p}} & {M^{\otimes p}} \\
		{U_n\times\mu_p} & {\A^1_{U_n}} & {\mathbb{P}^1_{U_n}} \\
		& {U_n}
		\arrow[hook, from=1-2, to=1-3]
		\arrow[from=1-2, to=2-2]
		\arrow[from=1-3, to=2-3]
		\arrow["j", hook, from=2-1, to=2-2]
		\arrow["f"', from=2-1, to=3-2]
		\arrow["t", shift left, curve={height=-6pt}, from=2-2, to=1-2]
		\arrow[hook, from=2-2, to=2-3]
		\arrow["p", from=2-2, to=3-2]
		\arrow["t", shift left, curve={height=-6pt}, from=2-3, to=1-3]
		\arrow["s_0"', shift right, curve={height=6pt}, from=3-2, to=2-2]
	\end{tikzcd}\]
	
	Taking $\mu_p$ homotopy orbits in étale sheaves, we obtain $(U_n\times\mu_p) \sslash\mu_p\simeq U_n$, $U_n \sslash\mu_p\simeq U_n/\mu_p$, and $\A^1_{U_n}\sslash\mu_p\simeq \eta_n$. For brevity, we write furthermore $\mathbb{P}$ and $M'$ for the $\mu_p$ quotients of $\mathbb{P}^1_{U_n}$, and $M$ respectively.
	
	Using \cite[Corollary 5.21, see also Meta-Theorem 3.9]{bachmann2023euler}, we can identify the Thom isomorphism with the pushforward of the zero section $s_0:U_n/\mu_p \to \eta_n$:
	\begin{equation*}
		(s_0)_\ast:E^{\ast,\ast}(U_n/\mu_p)\xrightarrow{\cong} \tilde{E}^{\ast+2,\ast+1}(\mathrm{Th}(\eta_n))=E^{\ast+2,\ast+1}(\mathbb{P},\infty). 
	\end{equation*}
	Explicitly, $(s_0)_\ast(1)=\mathrm{th}(\eta_n)=\tilde{t}_n$ is the Thom class of the line bundle $\eta_n\to U_n/\mu_p$. By construction $s_0(\mu_p/\mu_p)$ is the zero locus\footnote{Here we use that taking homotopy orbits of group schemes commutes with taking pullbacks, using that by \cite[Theorem 4.46]{Vistoli2005}, applying homotopy orbits defines an equivalence of categories between equivariant schemes and schemes over the stacky quotient,} of the section $s:\mathbb{P}\to M'$. So, using \cite[Corollary 5.21]{bachmann2023euler}, we also have that $(s_0)_\ast(1)=e(M')\in E^{\ast,\ast}(\mathrm{Th}(\eta_n))$. Similarly, $j_\ast(1)=e((M')^{\otimes p})\in E^{\ast,\ast}(\mathrm{Th}(\eta_n))$, where $j:U_n \to \mathbb{P}$. We can thus calculate
	\begin{align*}
		(s_0)_\ast(f_n)_\ast (1) &=  j_\ast(1)\\
		&=e((M')^{\otimes p})\\
		&=[p]_E(e(M'))\\
		&=[p]_E((s_0)_\ast(1))\\
		&=(s_0)_\ast(1)\langle p\rangle_E((s_0)_\ast(1)))\\
		&=(s_0)_\ast\langle p\rangle_E(s_0^\ast(s_0)_\ast(1))).
	\end{align*}
	Here we used in the third line that $e((M')^{\otimes p})=c_1((M')^{\otimes p})=[p]_E(c_1(M'))=[p]_E(e(M'))$ by the orientation on $E$ and the fact, that $M'$ is a line bundle. In the fifth line, we used the definition of the divided $p$-series $[p]_E(x)=x\langle p\rangle_E(x)$. The last line is the projection formula for closed immersions.
	
	Now $(s_0)_\ast$ is an isomorphism, and as $s_0^\ast(s_0)_\ast(1)=s_0^\ast\tilde{t}(\eta_n)=e(\eta_n)=t_n$, we obtain the desired result.
\end{proof}

\begin{remark}
	Notice that the proof of this proposition in particular shows that 
\end{remark}

Our next goal is to prove Quillen's technical lemma.
\begin{proposition}[Quillen's technical lemma]
	\label{prop: scheme version of quillens technical lemma}
	Let $E$ be an incoherently normed, oriented spectrum. Denote by $j_n:U_n/\mu_p\to U_n/\mu_p$ the induced map from the inclusion at $1$.
	
	If there exists an element $x\in E^{\ast,\ast}(U_{n+1}/\mu_p)$ with the property that $xt_{n+1}=0$, then there exists an element $y\in E^{\ast,\ast}$ such that 
	\begin{equation*}
		y\langle p\rangle_E(t_n)=j_n^\ast x\in E^{\ast,\ast}(U_n/\mu_p).
	\end{equation*}
\end{proposition}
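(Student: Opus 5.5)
Following Quillen's original argument \cite[Proposition 4.2]{BordismQuillen}, I will use a Gysin (localization) sequence that relates multiplication by $t_{n+1}$ on $U_{n+1}/\mu_p$ to the smaller space $U_n/\mu_p$. The decomposition $\A^{n+1}\setminus\{0\} = U_{n+1}$ contains the closed subscheme $Z=\{x_{n+1}=0\}\cap U_{n+1}\cong U_n$, which is $\mu_p$-stable, with open complement $V=U_{n+1}\setminus Z\cong U_n\times\Gm$ ($\A^n\times\Gm$, really). Passing to $\mu_p$-quotients, $Z/\mu_p\cong U_n/\mu_p$ sits in $U_{n+1}/\mu_p$ with normal bundle $\eta_n$; this is the tautological line bundle, since the last coordinate transforms by the character. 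The Euler class of this normal bundle is $t_n$, the restriction of $t_{n+1}$.

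\emph{Step 1.} Write down the localization/Gysin long exact sequence in $E$-cohomology (purity, as in \cite{bachmann2023euler}):
\begin{equation*}
 E^{\ast-2,\ast-1}(U_n/\mu_p)\xrightarrow{i_\ast} E^{\ast,\ast}(U_{n+1}/\mu_p)\xrightarrow{} E^{\ast,\ast}(V/\mu_p)\to\cdots
\end{equation*}
Then identify the composite $i^\ast i_\ast$ with multiplication by $t_n$. Also, more usefully, $t_{n+1}$ itself is $i_\ast(1)$, because $t_{n+1}$ is the Euler class of a line bundle with a section (the last coordinate) vanishing exactly on $Z$.

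\emph{Step 2.} Compute the open piece $V/\mu_p=(\A^n\times\Gm)/\mu_p$. Projection to the $\Gm$-factor gives $\A^n$-bundle structure over $\Gm/\mu_p\cong\Gm$ (via $z\mapsto z^p$). So by homotopy invariance, $E^{\ast,\ast}(V/\mu_p)\cong E^{\ast,\ast}(\Gm)$, and the map $f:V\to V/\mu_p$ restricted over $1$ is essentially the torsor $\mu_p\to\ast$.

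\emph{Step 3.} From $x t_{n+1}=0$, I will deduce that $x$ restricted to $V/\mu_p$ lies in the image of $E^{\ast,\ast}$, up to a $\Gm$-term. Here is the idea. The restriction of $t_{n+1}$ to $V/\mu_p$ is the Euler class of a bundle pulled back along $V/\mu_p\to\Gm\to\Bet{\mu_p}$; on $V$ that line bundle is trivialized by $x_{n+1}$, so the relevant relation is instead $i_\ast(i^\ast x)=x\,t_{n+1}=0$ by the projection formula. Exactness then shows that $i^\ast x$ comes from the boundary map $E^{\ast-1,\ast}(V/\mu_p)\to E^{\ast-2,\ast-1}(U_n/\mu_p)$. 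Note that $i^\ast x=j_n^\ast x$ with $j_n$ the inclusion at $1$, i.e. $x_{n+1}=0$ up to relabeling. I will use the identification of Step 2: $E(V/\mu_p)\cong E^{\ast,\ast}\oplus E^{\ast-1,\ast-1}[\Gm]$. Then I will compute the boundary map on these generators. The boundary of the fundamental class of $\Gm$ should be $i^\ast$-related to $f_\ast(1)$ for the torsor $f:U_n\to U_n/\mu_p$, which by \Cref{lem: Cptorsordividedpseries} equals $\langle p\rangle_E(t_n)$. Boundaries of classes pulled back from $E^{\ast,\ast}$ vanish, since those extend over $U_{n+1}/\mu_p$. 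Hence $j_n^\ast x=y\langle p\rangle_E(t_n)$ for the coefficient $y\in E^{\ast,\ast}$, by $E^{\ast,\ast}$-linearity of the boundary.

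\emph{Main obstacle.} The hard step is the boundary computation in Step 3, namely identifying $\partial[\Gm]$ with $f_\ast(1)$. The natural approach is a comparison of localization sequences via the $\mu_p$-torsor $U_{n+1}\to U_{n+1}/\mu_p$. Upstairs, $\partial[\Gm]=1$ in the sequence for $\A^1\supset 0$, and pushing forward along the finite étale map should give $f_\ast(1)$. This needs compatibility of $\partial$ with finite étale transfers, which is where the incoherent norms/transfers on $E$ are used.
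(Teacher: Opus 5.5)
\textbf{Comparison with the paper's route.} The paper proceeds in three steps. It first uses the Gysin sequence of the sphere bundle $\pi_{n+1}\colon U_{n+1}\times_{\mu_p}\Gm\to U_{n+1}/\mu_p$ to write $x=(\pi_{n+1})_\ast z$. It then restricts along $j_n$ and compares the Gysin sequences of $\A^{n+1}\times_{\mu_p}\Gm$ and $\A^{n}\times_{\mu_p}\Gm$ over $\Gm/\mu_p$, using $t_1=0$, to see that $(j_n')^\ast z$ is pulled back from $\Gm/\mu_p$. Finally it pushes that class down with the torsor lemma.

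You use a single localization sequence, that of the divisor $i\colon U_n/\mu_p=\{x_{n+1}=0\}\hookrightarrow U_{n+1}/\mu_p$. Since $t_{n+1}=i_\ast 1$, the projection formula gives $i_\ast i^\ast x=x\,t_{n+1}=0$. Hence $j_n^\ast x=i^\ast x$ lies in $\operatorname{im}\partial$. The complement $V/\mu_p$ is a vector bundle over $\Gm/\mu_p\cong\Gm$, so $\operatorname{im}\partial=E^{\ast,\ast}\cdot\partial(\sigma')$, where $\sigma'$ is the pullback of the generator $\sigma\in E^{1,1}(\Gm/\mu_p)$. This removes the second Gysin comparison and the vanishing of $t_1$, and the restriction $j_n^\ast$ appears naturally. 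Your bidegree bookkeeping is correct. Both routes end in the same residue identity.

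\textbf{The gap.} The missing step is that residue identity, $\partial(\sigma')=\pm\langle p\rangle_E(t_n)$; a unit multiple suffices. You flag it, but the transfer argument as sketched does not reach it.

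Compatibility of $\partial$ with finite étale transfers gives $f_\ast(1)=f_\ast\partial(\sigma_{\mathrm{up}})=\partial(q_\ast\sigma_{\mathrm{up}})$, where $\sigma_{\mathrm{up}}$ is the class of the unit $x_{n+1}$ on $V$. Since $V\to V/\mu_p$ is the base change of $\rho\colon\Gm\to\Gm/\mu_p\cong\Gm$, $u\mapsto u^p$, this equals $\partial(\overline{\mathrm{pr}}^\ast\rho_\ast\sigma)$. Writing $\rho_\ast\sigma=c+b\sigma$, you only get $f_\ast(1)=b\,\partial(\sigma')$. That is the wrong direction unless $b$ is a unit, and this is not automatic, since $\rho^\ast\sigma=p\sigma$.

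There are two ways to close it.
\begin{enumerate}
\item Compute $b$. It is the image of an element of $\mathrm{GW}(k)=[\Sigma^{1,1}\Sph,\Sigma^{1,1}\Sph]$, so for oriented $E$ only its rank matters. In $H\mathbb{Z}$ the transfer is the norm $N(u)=(-1)^{p-1}u^p$, so $b=1$.
\item Avoid transfers. The class $\sigma'$ is the class of the unit $x_{n+1}^p=s^{\otimes p}$, where $s$ is the tautological section of $L_{n+1}$ and $L_{n+1}^{\otimes p}\cong\mathcal{O}$. Naturality of the localization sequence along $x_{n+1}^p\colon(U_{n+1}/\mu_p,V/\mu_p)\to(\A^1,\Gm)$ identifies $\partial\sigma'$ with the refined Euler class of $(L_{n+1}^{\otimes p},s^{\otimes p})$. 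The map $v\mapsto v^{\otimes p}$ pulls $\mathrm{th}(L^{\otimes p})$ back to $\mathrm{th}(L)\langle p\rangle_E(e(L))$. You can check this universally by restricting to the zero section, where it reads $[p]_E(e)=e\langle p\rangle_E(e)$. Hence $\partial\sigma'=i_\ast\langle p\rangle_E(t_n)$, and \Cref{lem: Cptorsordividedpseries} is not even needed.
\end{enumerate}
Also, finite étale transfers exist for every $E\in\SH{k}$, so no norm structure is being used at this step.

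\textbf{The paper's corresponding step.} The paper writes classes on $\Gm/\mu_p$ as $y'+y\,i_\ast 1$ for the point $i$, and uses $(\pi_n)_\ast\iota_\ast=(\pi_n\iota)_\ast$. This is the topological $S^1$ picture. Motivically, $i_\ast 1\in E^{2,1}(\Gm/\mu_p)$ is the class of a principal divisor and vanishes, and the reduced generator is $\sigma\in E^{1,1}$. So the residue computation you isolated is needed there as well.
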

We start with some prerequisites. We will use the identification $\A^{1}\setminus\{0\}\cong\Gm$ without further mentioning.
\begin{lemma}
	\label{lem: gysin sequences}
	There exists a commutative diagram of Gysin sequences
	\begin{equation*}
		\begin{tikzcd}
			\cdots & {E^{\ast,\ast}(\Gm/\mu_p)} & {E^{\ast,\ast}(U_{n+1}\times_{\mu_p}\Gm)} & {E^{\ast-2(n+1)+1,\ast-(n+1)+1}(\Gm/\mu_p)} & \cdots \\
			\cdots & {E^{\ast,\ast}(\Gm/\mu_p)} & {E^{\ast,\ast}(U_n\times_{\mu_p}\Gm)} & {E^{\ast-2n+1,\ast-n+1}(\Gm/\mu_p)} & \cdots.
			\arrow["{t_1^{n+1}}", from=1-1, to=1-2]
			\arrow["{(p_{n+1})^\ast}", from=1-2, to=1-3]
			\arrow["{\mathrm{id}}"', from=1-2, to=2-2]
			\arrow["{(p_{n+1})_\ast}", from=1-3, to=1-4]
			\arrow["{(j_n')^\ast}"', from=1-3, to=2-3]
			\arrow["{t_1^{n+1}}", from=1-4, to=1-5]
			\arrow["t_1",from=1-4, to=2-4]
			\arrow["{t_1^n}"', from=2-1, to=2-2]
			\arrow["{(p_{n})^\ast}"', from=2-2, to=2-3]
			\arrow["{(p_n)_\ast}"', from=2-3, to=2-4]
			\arrow["{t_1^{n}}"', from=2-4, to=2-5]
		\end{tikzcd}
	\end{equation*}
	Here $p_n:U_n\times_{\mu_p}\Gm$ is induced from the projection onto the last factor, and $j'_n:U_n\times_{\mu_p}\Gm\to U_{n+1}\times_{\mu_p}\Gm$ is induced from the inclusion at $0$.
	
	Furthermore, $t_1$, the Euler class of the canonical quotient bundle $\A^1\times_{\mu_p}\Gm\to\Gm$ vanishes, so that the above diagram splits into a morphism of short exact sequences
	\begin{equation*}
		\begin{tikzcd}
			0 & {E^{\ast,\ast}(\Gm/\mu_p)} & {E^{\ast,\ast}(U_{n+1}\times_{\mu_p}\Gm)} & {E^{\ast-2(n+1)+1,\ast-(n+1)+1}(\Gm/\mu_p)} & 0 \\
			0 & {E^{\ast,\ast}(\Gm/\mu_p)} & {E^{\ast,\ast}(U_{n+1}\times_{\mu_p}\Gm)} & {E^{\ast-2n+1,\ast-n+1}(\Gm/\mu_p)} & 0.
			\arrow[from=1-1, to=1-2]
			\arrow["{(p_{n+1})^\ast}", from=1-2, to=1-3]
			\arrow["{\mathrm{id}}"', from=1-2, to=2-2]
			\arrow["{(p_{n+1})_\ast}", from=1-3, to=1-4]
			\arrow["{(j'_n)^\ast}"', from=1-3, to=2-3]
			\arrow[from=1-4, to=1-5]
			\arrow["0", from=1-4, to=2-4]
			\arrow[from=2-1, to=2-2]
			\arrow["{(p_{n})^\ast}"', from=2-2, to=2-3]
			\arrow["{(p_n)_\ast}"', from=2-3, to=2-4]
			\arrow[, from=2-4, to=2-5]
		\end{tikzcd}
	\end{equation*}
\end{lemma}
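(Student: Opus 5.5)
We plan to realize $U_{n+1}\times_{\mu_p}\Gm$ as the complement of a zero section in a vector bundle over $\Gm/\mu_p$, and obtain the rows from the localization (Gysin) sequence. Write $U_{n+1}\times_{\mu_p}\Gm=(\A^{n+1}\times\Gm\setminus\{0\}\times\Gm)/\mu_p$. The projection to the last factor makes $V_{n+1}:=\A^{n+1}\times_{\mu_p}\Gm\to\Gm/\mu_p$ a rank $n+1$ vector bundle, and $U_{n+1}\times_{\mu_p}\Gm$ is the complement of its zero section. Since $\mu_p$ acts diagonally through the character $\chi$, we get $V_{n+1}\cong\lambda^{\oplus(n+1)}$, where $\lambda=\A^1\times_{\mu_p}\Gm$ is the line bundle with Euler class $t_1$. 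The cofiber sequence $(V_{n+1}\setminus 0)_+\to (V_{n+1})_+\to\mathrm{Th}(V_{n+1})$ and homotopy invariance $E(V_{n+1})\cong E(\Gm/\mu_p)$ give a long exact sequence. The Thom isomorphism (orientation of $E$) identifies the Thom-space term with a shift of $E^{\ast,\ast}(\Gm/\mu_p)$. The composite $E(\Gm/\mu_p)\to E(\mathrm{Th})\to E(V)\simeq E(\Gm/\mu_p)$ is multiplication by the Euler class $e(V_{n+1})=c_{n+1}(\lambda^{\oplus n+1})=t_1^{n+1}$, by the Whitney formula for Chern classes. The connecting map is identified with the pushforward $(p_{n+1})_\ast$, as in the identification of Thom isomorphism with pushforward via \cite{bachmann2023euler} used in \Cref{lem: Cptorsordividedpseries}. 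This produces both rows.

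Next we check naturality. The inclusion $\A^n\to\A^{n+1}$ at $0$ in the last coordinate is a $\mu_p$-equivariant map of bundles $V_n\hookrightarrow V_{n+1}$ over $\Gm/\mu_p$, with $V_{n+1}=V_n\oplus\lambda$. It restricts to $j'_n$ on the complements of the zero sections, so the localization sequences map to each other. On base terms the map is the identity, by homotopy invariance. On Thom terms we have the restriction $\mathrm{Th}(V_n)\to\mathrm{Th}(V_n\oplus\lambda)$, which under Thom isomorphisms is multiplication by $e(\lambda)=t_1$: the Thom class of a sum is the product of Thom classes, and restricting the $\lambda$-factor along its zero section gives its Euler class. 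This gives the vertical map labelled $t_1$. Commutativity is then functoriality of cofiber sequences.

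The main point, which we expect to be the real obstacle, is $t_1=0$. The plan is to show that $\lambda$ is trivial over $\Gm/\mu_p$. The map $\Gm\to\Gm$, $z\mapsto z^p$, identifies $\Gm/\mu_p\cong\Gm$, using that $\mathrm{char}\,k\neq p$. A nowhere-vanishing $\mu_p$-equivariant section of $\A^1\times\Gm\to\Gm$ is $z\mapsto(z,z)$: we need $s(\zeta z)=\zeta s(z)$, which holds for the identity coordinate, since $\mu_p$ acts on both factors through the same character $\chi$. This section descends to a nowhere-vanishing section of $\lambda$, so $\lambda\cong\mathcal O$ and $t_1=e(\lambda)=0$.

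The care needed is only in matching the action conventions, namely that the $\Gm$ factor carries the same character as $\A^1$. Once $t_1=0$, all powers $t_1^{n}$ and $t_1^{n+1}$ vanish. The long exact sequences break into short exact sequences, and the right vertical map becomes $0$.
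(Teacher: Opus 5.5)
Your proposal is correct. Its overall skeleton matches the paper: Gysin sequences for $\A^{n+1}\times_{\mu_p}\Gm\supset\A^{n}\times_{\mu_p}\Gm$ over $\Gm/\mu_p$, naturality along $j'_n$, identifying the right vertical map with $t_1$, and then $t_1=0$. Both key steps, however, are done differently.

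\textbf{Identifying the right vertical map.} The paper works in the pushforward formalism. It uses the pullback square of \Cref{lem: helplemma commutativediagram} and base change to write $(p_n)_\ast(j'_n)^\ast z=(p_{n+1})_\ast((j'_n)_\ast(1)\cdot z)$, and computes $(j'_n)_\ast(1)=(p_{n+1})^\ast t_1$. This pins down the vertical map only after precomposing with $(p_{n+1})_\ast$. The paper gets surjectivity of $(p_{n+1})_\ast$ from $t_1=0$, so its identification of the first diagram depends on the vanishing. You instead split $V_{n+1}=V_n\oplus\lambda$ and use multiplicativity of Thom classes: restricting $\mathrm{th}(V_n\oplus\lambda)$ along $\mathrm{Th}(V_n)\to\mathrm{Th}(V_n\oplus\lambda)$ gives $\mathrm{th}(V_n)\cdot e(\lambda)$. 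This identifies the map as multiplication by $t_1$ unconditionally, so your first diagram holds for any line bundle $\lambda$, independently of $t_1=0$.

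\textbf{Vanishing of $t_1$.} The paper argues cohomologically. It views $t_1$ as the image of a class in $\MGL^{2,1}(\Gm/\mu_p)\cong\MGL^{2,1}(\Gm)$, using the Kummer identification, and this group vanishes by connectivity of $\MGL$. Your explicit nowhere-vanishing equivariant section $z\mapsto(z,z)$ shows that the bundle $\lambda$ itself is trivial. That is more elementary and works for any oriented $E$ without invoking connectivity of $\MGL$. It does not even need the identification $\Gm/\mu_p\cong\Gm$, only descent of the section along the torsor $\Gm\to\Gm/\mu_p$.

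\textbf{Trade-off.} What the paper's route buys is uniformity of formalism. The same base-change and projection-formula manipulations with $(j'_n)_\ast$ and $(\pi_n)_\ast$ are reused directly in the proof of Quillen's technical lemma that follows.
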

\begin{proof}
	Notice first, that $t_1$ is the image of an element in $\MGL^{2,1}(\Gm/\mu_p)$. However, by the Kummer sequence, we know $\Gm/\mu_p\simeq\Gm$ with the equivalence being induced by the $p$-th power map, so we have
	\begin{align*}
		\MGL^{2,1}(\Gm/\mu_p)&=\pi_0\mathrm{map}(\sus\Gm,\Sigma^{2,1}\MGL)\\
		&\cong \pi_0\mathrm{map}(\Sigma^{1,1}\Sph,\Sigma^{2,1}\MGL)\\
		&\cong \pi_0\mathrm{map}(\Sph,\Sigma^{1,0}\MGL)\\
		&\cong \pi_{-1}(\MGL)_0\\
		&=0
	\end{align*}
	as $\MGL$ is connective. This takes care of the ``Furthermore'' part, once we have shown the existence of the diagram of Gysin sequences. For this, consider the commutative diagram of étale sheaves
	\[\begin{tikzcd}
		& {0\times \Gm} && {\A^{n}\times \Gm} && {U_n\times\Gm} \\
		& {\Gm/\mu_p} && {\A^n\times_{\mu_p}\Gm} && {U_n\times_{\mu_p}\Gm} \\
		{0\times \Gm} && {\A^{n+1}\times \Gm} && {U_{n+1}\times\Gm} \\
		{\Gm/\mu_p} && {\A^{n+1}\times_{\mu_p}\Gm} && {U_{n+1}\times_{\mu_p}\Gm}
		\arrow["\shortmid"{marking}, hook, from=1-2, to=1-4]
		\arrow[from=1-2, to=2-2]
		\arrow[from=1-2, to=3-1]
		\arrow[from=1-4, to=2-4]
		\arrow["\bullet"{marking, text=\pgfkeysvalueof{/tikz/commutative diagrams/background color}}, "\circ"{marking}, hook', from=1-6, to=1-4]
		\arrow[from=1-6, to=2-6]
		\arrow["\shortmid"{marking}, hook,  from=2-2, to=2-4]
		\arrow[from=2-2, to=4-1]
		\arrow[from=2-4, to=4-3]
		\arrow["\bullet"{marking, text=\pgfkeysvalueof{/tikz/commutative diagrams/background color}}, "\circ"{marking}, hook', from=2-6, to=2-4]
		\arrow[hook', from=2-6, to=4-5]
		\arrow[from=3-1, to=4-1]
		\arrow[from=3-3, to=4-3]
		\arrow[from=3-5, to=4-5]
		\arrow["\shortmid"{marking}, hook,  from=4-1, to=4-3]
		\arrow["\bullet"{marking, text=\pgfkeysvalueof{/tikz/commutative diagrams/background color}}, "\circ"{marking}, hook', from=4-5, to=4-3]
		\arrow[hook', from=1-4, to=3-3, crossing over]
		\arrow[hook', from=1-6, to=3-5, crossing over]
		\arrow["\shortmid"{marking}, hook, from=3-1, to=3-3,crossing over]
		\arrow["\bullet"{marking, text=\pgfkeysvalueof{/tikz/commutative diagrams/background color}}, "\circ"{marking}, hook', from=3-5, to=3-3,crossing over]
	\end{tikzcd}\]
	where the horizontal arrows are closed immersions with their open complements, while the vertical arrows are the canoncial projections onto the quotients. Applying the functoriality of the Gysin sequence in closed pairs, see \cite[Section 3]{degliseorientation} and \cite{deglise2021fundamental}, we obtain the following diagram from the bottom layer of the diagram
	\begin{equation*}
		\begin{tikzcd}
			\cdots & {E^{\ast,\ast}(\Gm/\mu_p)} & {E^{\ast,\ast}(U_{n+1}\times_{\mu_p}\Gm)} & {E^{\ast-2(n+1)+1,\ast-(n+1)+1}(\Gm/\mu_p)} & \cdots \\
			\cdots & {E^{\ast,\ast}(\Gm/\mu_p)} & {E^{\ast,\ast}(U_n\times_{\mu_p}\Gm)} & {E^{\ast-2n+1,\ast-n+1}(\Gm/\mu_p)} & \cdots
			\arrow["{t_1^{n+1}}", from=1-1, to=1-2]
			\arrow["{(p_{n+1})^\ast}", from=1-2, to=1-3]
			\arrow["{\mathrm{id}}"', from=1-2, to=2-2]
			\arrow["{(p_{n+1})_\ast}", from=1-3, to=1-4]
			\arrow["{(j_n')^\ast}"', from=1-3, to=2-3]
			\arrow["{t_1^{n+1}}", from=1-4, to=1-5]
			\arrow[from=1-4, to=2-4]
			\arrow["{t_1^n}"', from=2-1, to=2-2]
			\arrow["{(p_{n})^\ast}"', from=2-2, to=2-3]
			\arrow["{(p_n)_\ast}"', from=2-3, to=2-4]
			\arrow["{t_1^{n}}"', from=2-4, to=2-5]
		\end{tikzcd}
	\end{equation*}
	where we already used $\A^1$-invaraiance to rewrite $E^{\ast,\ast}(\A^n\times_{\mu_p}\Gm)\simeq E^{\ast,\ast}(\Gm/\mu_p)$. We want to identify the right most vertical arrow with the multiplication by $t_1$. Since we already know that $t_1=0$, the maps $(p_n)_\ast$ are surjective for all $n$ so it suffices to identify the right most vertical morphism in the Gysin sequence diagram after precomposition by $(p_{n+1})_\ast$. By commutativity, this is equivalent to studying $(p_n)_\ast (j'_n)^\ast$. Using \Cref{lem: helplemma commutativediagram}, we can calculate for any $z\in E^{\ast,\ast}(U_{n+1}\times_{\mu_p}\Gm)$
	\begin{align*}
		(p_{n})_\ast (j'_n)^\ast z&=(p_{n+1})_\ast (j'_n)_\ast (j'_n)^\ast z\\
		&=(p_{n+1})_\ast (j'_n)_\ast (1\cdot ((j'_n)^\ast z))\\
		&=(p_{n+1})_\ast ((j'_n)_\ast(1)\cdot z).
	\end{align*}
	It follows that the right most vertical morphism in the Gysin sequence diagram is given by multiplication with $(j'_n)_\ast(1)$. Using \Cref{lem: helplemma commutativediagram} again, together with the fact that $(p_{n})^\ast$ is a ring homomorphism, we obtain
	\begin{align*}
		(j'_n)_\ast(1)&=(j'_n)_\ast (p_n)^\ast(1)\\
		&=\pi^\ast (s_0)_\ast(1)\\
		&=(p_{n+1})^\ast(s_0)^\ast(s_0)_\ast(1)\\
		&=t_1. 
	\end{align*}
	Here, $s_0:\Gm/\mu_p\to \A^1\times_{\mu_p}\Gm$ denotes the map induced zero section. The second equality follows from the functoriality of the Gysin sequences. Thus, the right most vertical arrow in the Gysin diagram is given by the multiplication by $t_1$, finishing the proof.
\end{proof}

The following was used in the proof
\begin{lemma}
	\label{lem: helplemma commutativediagram}
	The following diagram is commutative and the outer square is a pullback square in the category of schemes over $\mathrm{Spec}(k)$
	\[\begin{tikzcd}
		{U_n\times_{\mu_p}\Gm} & {U_{n+1}\times_{\mu_p}\Gm} \\
		{\Gm/\mu_p} & {\A^{1}\times_{\mu_p}\Gm}.
		\arrow["(j'_n)"{inner sep=.8ex}, "\shortmid"{marking}, hook, from=1-1, to=1-2]
		\arrow["{p_n}"', from=1-1, to=2-1]
		\arrow["{p_{n+1}}"{description}, from=1-2, to=2-1]
		\arrow["\pi", from=1-2, to=2-2]
		\arrow["{s_0}"'{inner sep=.8ex}, "\shortmid"{marking}, hook, from=2-1, to=2-2]
	\end{tikzcd}\]
	Here $\pi$ is induced from the $\mu_p$-eqivariant morphism $U_{n+1}\times\Gm\to \Gm\times \Gm\to\A^1\times\Gm$ given by first projecting onto the last factor and then including into $\A^1$. In particular, we obtain identities
	\begin{align*}
		(p_{n})_\ast&=(p_{n+1})_\ast (j'_n)_\ast,\\
		(j'_n)_\ast (p_n)^\ast&=\pi^\ast (s_0)_\ast.
	\end{align*}
\end{lemma}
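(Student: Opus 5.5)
We prove the three claims in turn: commutativity of the triangle, the pullback property of the outer square, and then the two identities. The inner triangle, $p_{n+1}\circ j'_n=p_n$, commutes because it already commutes $\mu_p$-equivariantly before passing to quotients. On $U_n\times\Gm\to U_{n+1}\times\Gm$, the map $j'_n$ is $(q,z)\mapsto((q,0),z)$, and both $p_n$ and $p_{n+1}$ project onto the $\Gm$-coordinate. The inclusion and the projections are equivariant for the diagonal $\mu_p$-action. Hence the relation descends to the quotients, which are schemes by \cite[III.3.12, Proposition 18]{serre1988algebraic}. In the same way, $\pi\circ j'_n=s_0\circ p_n$ holds upstairs, since $(q,z)\mapsto (0,z)$ either way.

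For the pullback property, we first work equivariantly. In the diagram $U_n\times\Gm\to U_{n+1}\times\Gm\to \A^1\times\Gm\leftarrow \{0\}\times\Gm$, the map $\pi$ should be read as sending $((q_1,\dots,q_{n+1}),z)$ to $(q_{n+1},z)$; that is, it records the last coordinate $q_{n+1}\in\A^1$. With this reading, the fiber of $\pi$ over the zero section is $\{q\in U_{n+1}: q_{n+1}=0\}\times\Gm=U_n\times\Gm$, embedded exactly via $j'_n$. So the upstairs square is cartesian, with closed immersions as horizontal maps. To descend, we use that $\mu_p$ acts freely, so quotients agree with étale homotopy quotients. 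By \cite[Theorem 4.46]{Vistoli2005}, taking quotients is an equivalence between equivariant schemes and schemes over the quotient stack, and it commutes with fiber products. The quotient square is therefore cartesian. Transversality also holds: the normal bundle of $j'_n$ is the pullback along $p_n$ of the normal bundle of $s_0$. This is because both are the line $\A^1$ in the last coordinate, twisted by the character. This is the step needing most care, since it must be checked on quotients rather than only upstairs, and it is what the base change formula below requires. It follows from the same descent equivalence applied to the equivariant normal bundles.

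For the first identity, the equality $p_n=p_{n+1}\circ j'_n$ of maps between smooth schemes gives $(p_n)_\ast=(p_{n+1})_\ast(j'_n)_\ast$ by functoriality of Gysin pushforwards for oriented theories, as in \cite{degliseorientation} and \cite{deglise2021fundamental}. Here $p_n$ and $p_{n+1}$ are understood as the pushforwards appearing in the Gysin sequences of \Cref{lem: gysin sequences}, namely the composite of the zero-section Thom isomorphism with the boundary map; functoriality for these is likewise part of that formalism. For the second identity, $(j'_n)_\ast p_n^\ast=\pi^\ast(s_0)_\ast$, we apply the base change formula for proper pushforward along the cartesian, transversal square just established \cite[Section 3]{degliseorientation}. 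The transversality (excess bundle zero) computed in the previous paragraph ensures that no excess class appears.
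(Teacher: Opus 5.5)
Your proposal is correct and follows essentially the same route as the paper: check commutativity and cartesianness of the $\mu_p$-equivariant square upstairs via the functor of points, descend to the quotients, and read off both identities from the functoriality and base change of Gysin maps in D\'eglise's formalism. Your reading of $\pi$ as recording the last coordinate in $\A^1$, and of the bottom map as the zero section, is the one the later computation $(j'_n)_\ast(1)=t_1$ needs; your remarks on descent and transversality add detail the paper leaves implicit, and the no-excess condition also follows simply from $\pi$ being smooth.
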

\begin{proof}
	This follows directly from the following $\mu_p$-equivariant diagram 
	\[\begin{tikzcd}
		{U_n\times\Gm} & {U_{n+1}\times\Gm} \\
		\Gm & {\A^{1}\times\Gm},
		\arrow["j'_n"{inner sep=.8ex}, "\shortmid"{marking}, hook, from=1-1, to=1-2]
		\arrow["{p_n}"', from=1-1, to=2-1]
		\arrow["{p_{n+1}}"{description}, from=1-2, to=2-1]
		\arrow["\pi", from=1-2, to=2-2]
		\arrow["{s_0}"'{inner sep=.8ex}, "\shortmid"{marking}, hook, from=2-1, to=2-2]
	\end{tikzcd}\]
	where the outer square is a pullback square, as can be seen from the functor of points description using that limits of sheaves can be calculated section-wise. Further the left vertical and the diagonal are the canonical projections, and the right vertical is given by first projecting onto the last factor and then including into $\A^1$. The upper horizontal arrows is the obvious inclusions, while the lower one is given by the constant section at 1.
	
	The in particular part now follows from applying the functoriality of Gysin sequence for closed pair, see also \cite[Proposition 3.1.4]{degliseorientation} for the first equation.
\end{proof}
\begin{proof}[Proof of \Cref{prop: scheme version of quillens technical lemma}]
	Denote by $\pi_{n+1}:U_{n+1}\times_{\mu_p}\Gm\to U_{n+1}/\mu_p$ the projection onto the first factor. The Euler class of the bundle $\A^{n+1}\times_{\mu_p}\Gm\to \Gm/\mu_p$ is, by definition, $t_{n+1}$. We obtain a Gysin sequence
	\[\begin{tikzcd}
		{E^{\ast,\ast}(U_{n+1}\times_{\mu_p}\Gm)} & {E^{\ast,\ast}(U_{n+1}/\mu_p)} & {E^{\ast,\ast}(U_{n+1}/\mu_p)}
		\arrow["{(\pi_{n+1})_{\ast}}", from=1-1, to=1-2]
		\arrow["{t_{n+1}}", from=1-2, to=1-3]
	\end{tikzcd}\]
	By assumption, there exists an element $z\in E^{\ast,\ast}(U_n\times_{\mu_p}\Gm)$ such that 
	\begin{equation*}
		(\pi_{n+1})_\ast z=x\in E^{\ast,\ast}(U_n/{\mu_p}).
	\end{equation*}
	Recall that $j_n:U_n/\mu_p\to U_{n+1}/\mu_p$. From the pullback square
	\[\begin{tikzcd}
		{U_n\times_{\mu_p}\Gm} & {U_{n+1}\times_{\mu_p}\Gm} \\
		{U_n/\mu_p} & {U_{n+1}/\mu_p}
		\arrow["{j_n'}"{inner sep=.8ex}, "\shortmid"{marking}, hook, from=1-1, to=1-2]
		\arrow["{\pi_n}"', from=1-1, to=2-1]
		\arrow["\lrcorner"{anchor=center, pos=0.125}, draw=none, from=1-1, to=2-2]
		\arrow["{\pi_{n+1}}", from=1-2, to=2-2]
		\arrow["{j_n}"'{inner sep=.8ex}, "\shortmid"{marking}, hook, from=2-1, to=2-2]
	\end{tikzcd}\]
	it follows that $(j_n)^\ast x=(j_n)^\ast(\pi_{n+1})_\ast z=(\pi_n)_\ast(j_n')^\ast z$. But by \Cref{lem: gysin sequences} the right most vertical morphism in the diagram of Gysin sequences associated to the above morphism of closed pairs, is 0. Therefore $(p_n)_\ast(j'_n)^\ast z=0$, and hence there exists an element $z'\in E^{\ast,\ast}(\Gm/\mu_p)$, such that $(p_n)^\ast z'=(j'_n)^\ast z$. As the map $\Gm/\mu_p\to\ast\simeq \mu_p/\mu_p$ has a section $i:\ast\to\Gm/\mu_p$ induced by the inclusion $\mu_p\hookrightarrow\Gm$, the Gysin sequence of the closed pair $(\Gm/\mu_p,\ast)$ splits into short exact sequences, and we can write $z'= y'\cdot 1 + y\cdot i_\ast 1$ with $y'\in E^{\ast+1,\ast}$ and $y\in E^{\ast,\ast}$.
	
	We now notice that by the Gysin sequence $(\pi_n)_\ast (p_n)^\ast 1=(\pi_n)_\ast1=(\pi_n)_\ast(\pi_n)^\ast 1 =0$, using that pullbacks are ring homomorphisms. Thus, using that every object we currently dealing with is a module over the coefficient ring $E^{\ast,\ast}$, and all the maps are module maps, we can calculate
	\begin{equation*}
		(j_n)^\ast x=(\pi_n)_\ast p_n^\ast z'=(\pi_n)_\ast (p_n)^\ast (y'\cdot 1+y\cdot i_\ast 1)=y' (\pi_n)_\ast (p_n)^\ast(1)+y(\pi_n)_\ast (p_n)^\ast i_\ast (1)=y(\pi_n)_\ast (p_n)^\ast i_\ast (1).
	\end{equation*}
	Thus, to prove \Cref{lem: gysin sequences}, it suffices to calculate $(\pi_n)_\ast (p_n)^\ast i_\ast (1)$. Consider the pullback diagram
	\[\begin{tikzcd}
		{U_{n+1}\simeq U_{n+1}\times_{\mu_p}\mu_p} & {U_{n+1}\times_{\mu_p}\Gm} \\
		\ast\simeq\mu_p/\mu_p & {\Gm/\mu_p}.
		\arrow["\iota", from=1-1, to=1-2]
		\arrow["{\mathrm{pr}}"', from=1-1, to=2-1]
		\arrow["\lrcorner"{anchor=center, pos=0.125}, draw=none, from=1-1, to=2-2]
		\arrow["{p_n}", from=1-2, to=2-2]
		\arrow["i"', from=2-1, to=2-2]
	\end{tikzcd}\]
	It follows that $(p_n)^\ast i_\ast(1)=\iota_\ast (\mathrm{pr})^\ast(1)=\iota_\ast (1)$. The composition $\pi_n\circ \iota$ can be identified with the $\mu_p$-torsor $U_n\to U_n/\mu_p$, so that by \Cref{lem: Cptorsordividedpseries} we obtain 
	\begin{equation*}
		(\pi_n)_\ast (p_n)^\ast i_\ast (1)=(\pi_n)_\ast\iota_\ast(1)=(f_n\sslash\mu_p)(1)=\langle p\rangle_E(t_n),
	\end{equation*}
	finishing the claim.
\end{proof}

\begin{remark}
	One can also formulate Quillen's technical lemma purely in the language of stacks, see \Cref{Prop: MGL-cohomologyforBetmup}. In this language, \Cref{lem: Cptorsordividedpseries} would translate in to an equivalent statement about the bundle $\ast\to\Bet{\mu_p}$, and the proof of \Cref{Prop: MGL-cohomologyforBetmup} is considerably shorter and more conceptual. To minimize the usage of stacks, and circumvent the problem of a sparse distribution of references, we decided to follow the more explicit, scheme-focused path.
\end{remark}
\begin{lemma}
	\label{Prop: MGL-cohomologyforBetmup}
	Let $E$ be an incoherently normed, oriented spectrum. Denote by $\Bet{\mu_p}$ the étale classifying space of $\mu_p$ torsors, and by $t$ the euler class of the canoncial line bundle $\A^1\sslash\mu_p\to\Bet{\mu_p}$. There is an exact sequence
	\begin{equation*}
		\ldots\to E^{\ast,\ast}\xrightarrow{\langle p\rangle_E(t)}E^{\ast,\ast}(\Bet{\mu_p})\xrightarrow{t}E^{\ast+2,\ast+1}(\Bet{\mu_p})\to\ldots.
	\end{equation*}
	In particular, if there is an element $x\in E^{\ast,\ast}(\Bet{\mu_p})$ such that $tx=0$, then there exists an element $y\in E^{\ast,\ast}(\ast)$, satisfying
	\begin{equation*}
		y\langle p\rangle_E(t)=x.
	\end{equation*}
\end{lemma}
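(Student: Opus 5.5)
The plan is to obtain the sequence as the Gysin (localization) sequence of the canonical line bundle $\eta:=\A^1\sslash\mu_p\to\Bet{\mu_p}$, and then to identify the pushforward term using \Cref{lem: Cptorsordividedpseries}. The complement of the zero section of $\eta$ is $\Gm\sslash\mu_p$. Since $\mu_p$ acts freely on $\Gm$, this quotient is $\Gm/\mu_p\simeq\Gm$ via the Kummer map, as already used in the proof of \Cref{lem: gysin sequences}. So the first step is to write down the cofiber sequence of étale stacks (or of their motivic spaces)
\begin{equation*}
	(\Gm\sslash\mu_p)_+\to(\A^1\sslash\mu_p)_+\to\mathrm{Th}(\eta)
\end{equation*}
and combine it with the Thom isomorphism for the oriented theory $E$ and $\A^1$-invariance $E^{\ast,\ast}(\A^1\sslash\mu_p)\cong E^{\ast,\ast}(\Bet{\mu_p})$.

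This yields a long exact sequence
\begin{equation*}
	\cdots\to E^{\ast,\ast}(\Bet{\mu_p})\xrightarrow{t}E^{\ast+2,\ast+1}(\Bet{\mu_p})\to E^{\ast+2,\ast+1}(\Gm\sslash\mu_p)\xrightarrow{\partial}E^{\ast+1,\ast}(\Bet{\mu_p})\to\cdots.
\end{equation*}
The map labelled $t$ is the Thom isomorphism followed by restriction along the zero section, which is multiplication by the Euler class, exactly as in the last paragraph of \Cref{exmpl: twisted oriented cohomology}. The boundary map $\partial$ is the pushforward along the projection $q:\Gm\sslash\mu_p\to\Bet{\mu_p}$ of the sphere bundle.

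Next, I would compute $E^{\ast,\ast}(\Gm\sslash\mu_p)$ using the closed pair $(\Gm/\mu_p,\ast)$ with $i:\ast=\mu_p/\mu_p\hookrightarrow\Gm/\mu_p$, as in the proof of \Cref{prop: scheme version of quillens technical lemma}. This gives a split decomposition
\begin{equation*}
	E^{\ast,\ast}(\Gm/\mu_p)\cong E^{\ast,\ast}\cdot 1\oplus E^{\ast,\ast}\cdot i_\ast 1.
\end{equation*}
Then $q_\ast(1)=q_\ast q^\ast(1)=0$ by exactness. Meanwhile $q_\ast i_\ast(1)$ is the pushforward along the $\mu_p$-torsor $\ast\to\Bet{\mu_p}$, which should equal $\langle p\rangle_E(t)$ by the stacky version of \Cref{lem: Cptorsordividedpseries}. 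Hence the image of $\partial$ is $E^{\ast,\ast}\cdot\langle p\rangle_E(t)$. Splicing this in gives the displayed exact sequence, up to reindexing, where the first map is $y\mapsto y\langle p\rangle_E(t)$. The ``in particular'' statement then follows immediately from exactness at $E^{\ast,\ast}(\Bet{\mu_p})$.

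The main obstacle is justifying the Gysin sequence, the Thom isomorphism and the pushforwards for the stack $\Bet{\mu_p}$ rather than for schemes, together with the identity $q_\ast i_\ast(1)=\langle p\rangle_E(t)$ there. I would handle this by approximating $\Bet{\mu_p}$ by the schemes $U_n/\mu_p$. By \cite{MotivicColimits}, the geometric model gives $\Bet{\mu_p}\simeq\colim{n}{U_n/\mu_p}$ motivically, so every sequence is a colimit of the scheme-level ones used in the proof of \Cref{prop: scheme version of quillens technical lemma}, and \Cref{lem: Cptorsordividedpseries} holds at each finite level compatibly in $n$.

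Passing to cohomology introduces a $\lim^1$ term. I would control it via Mittag-Leffler: the restriction maps $j_n^\ast$ are surjective, because the classes $1,t,\dots$ and $\langle p\rangle_E(t)$ generate at each stage and are compatible. Alternatively, I would deduce the statement directly from \Cref{prop: scheme version of quillens technical lemma}: for $x$ with $tx=0$, apply that proposition at every level $n$ to get elements $y_n$. Then I would use that multiplication by $\langle p\rangle_E(t_n)$ is injective on $E^{\ast,\ast}$ for $n$ large, since its leading term is $p$ and the powers $t^k$ are independent in a range, to see that the $y_n$ agree and so define a single $y$.
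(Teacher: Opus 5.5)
Your overall frame is right: the Gysin sequence of $\eta$, the identification of the complement $\Gm\sslash\mu_p\simeq\Gm/\mu_p\cong\Gm$ via the Kummer map, and $t$ acting as multiplication by the Euler class. The gap is in the computation of $\operatorname{im}\partial$.

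First, the bidegrees do not match, as your own displayed sequence shows. The map $\partial\colon E^{a,b}(\Gm\sslash\mu_p)\to E^{a-1,b-1}(\Bet{\mu_p})$ is the boundary of the pair $(\eta,\eta\setminus 0)$ followed by the Thom isomorphism. The map $i_\ast$ for the codimension-one point $i\colon\ast\to\Gm/\mu_p$ raises bidegree by $(2,1)$. So $\partial\circ i_\ast$ has bidegree $(1,0)$, whereas the transfer of the finite étale torsor $\ast\to\Bet{\mu_p}$ has bidegree $(0,0)$. The identity $q_\ast i_\ast=(qi)_\ast$ is the topological one, where $\partial$ is integration over the compact fibre $S^1$. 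Here the fibre is $\Gm$, and $\partial$ is not a proper pushforward.

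Second, the claimed splitting is false, because $i_\ast(1)=0$. Indeed, $i_\ast$ factors through the collapse $\Sigma^\infty_+\Gm\to\Gm/(\Gm\setminus\{1\})\simeq\Sph^{2,1}$. This map is stably null because $[\Sph,\Sph^{2,1}]=[\Sph^{1,1},\Sph^{2,1}]=0$ by Morel's connectivity. Concretely, $[\{1\}]=\mathrm{div}(T-1)=0$ in $\mathrm{Pic}(\Gm)$. The correct decomposition is $E^{\ast,\ast}(\Gm/\mu_p)\cong E^{\ast,\ast}\cdot 1\oplus E^{\ast-1,\ast-1}\cdot\sigma$, where $\sigma\in E^{1,1}$ is the class $[T]$ of the summand $\Sph^{1,1}\subset\Sigma^\infty_+\Gm$, transported along the Kummer isomorphism. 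This $\sigma$ is not a pushforward from a point.

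Since $i_\ast(1)=0$, your identity $q_\ast i_\ast(1)=\langle p\rangle_E(t)$ would force $\langle p\rangle_E(t)=0$. That already fails for $E=\HZ$: there $\langle p\rangle(t)=p\neq 0$ in $H^{0,0}(\Bet{\mu_p},\Z)=\Z$, and $p$ lies in $\ker t$ because $t$ generates $\mathrm{Pic}(\Bet{\mu_p})\cong\Z/p$. The paper gives no separate proof of this lemma. Your argument is the stack-level version of its proof of \Cref{prop: scheme version of quillens technical lemma}, and the same two steps carry the same defect there: $z'=y'\cdot 1+y\cdot i_\ast 1$ and $(\pi_n)_\ast\iota_\ast(1)=f_\ast(1)$.

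What is actually needed is $\partial\sigma=\pm\langle p\rangle_E(t)$. This can be proved directly, without \Cref{lem: Cptorsordividedpseries} or any norms:
\begin{enumerate}
\item The $p$-th power map $\phi\colon\eta\to\eta^{\otimes p}$, $v\mapsto v^{\otimes p}$, is a map of pairs $(\eta,\eta\setminus 0)\to(\eta^{\otimes p},\eta^{\otimes p}\setminus 0)$.
\item Since $\eta^{\otimes p}\cong\A^1\times\Bet{\mu_p}$ canonically, on the complements of the zero sections $\phi$ is the Kummer isomorphism. Hence $\sigma=\phi^\ast[T]$.
\item For the trivial bundle, the boundary sends $[T]$ to $\pm$ the Thom class. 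Naturality of the boundary under $\phi$ then gives $\partial\sigma=\pm c$, where $c\in E^{0,0}(\Bet{\mu_p})$ is defined by $\phi^\ast\mathrm{th}(\eta^{\otimes p})=c\cdot\mathrm{th}(\eta)$.
\item Compute $c$ universally on $\BGm$ with $x=c_1(\gamma)$. Restricting along the zero section gives $x\cdot c=c_1(\gamma^{\otimes p})=[p]_E(x)=x\langle p\rangle_E(x)$.
\item Since $x$ is a nonzerodivisor in $E^{\ast,\ast}[[x]]$, this gives $c=\langle p\rangle_E(x)$, which pulls back to $\langle p\rangle_E(t)$ on $\Bet{\mu_p}$.
\item By $E^{\ast,\ast}$-linearity of $\partial$, $\ker t=\operatorname{im}\partial=E^{\ast,\ast}\cdot\langle p\rangle_E(t)$.
\end{enumerate}

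Two smaller points. The $\mathrm{lim}^1$ discussion is unnecessary: the cofibre sequence $(\Gm\sslash\mu_p)_+\to(\Bet{\mu_p})_+\to\mathrm{Th}(\eta)$ exists directly as a colimit of the finite-level ones, so the long exact sequence holds without passing through finite levels. Your fallback claim that multiplication by $\langle p\rangle_E(t_n)$ is injective on $E^{\ast,\ast}$ is false in general: for $E=\HZ/p$ one has $\langle p\rangle_E(t)=p=0$.
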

	\subsection{Quillen's Lemma}
We can now finally prove Quillen's lemma.
\begin{lemma}[Quillen]
	\label{lem: Quillenslemma}
	Let $k$ be a field containing a primitive $p$-th root of unity. Let $X\in\Spc{k}_\bullet$ be a compact, pointed motivic space, such that $\Sigma^\infty_+X$ is a finite cell complex. Then there exists a formula for $x\in\mathrm{MGL}^{2q,q}(X)$:
	\begin{equation*}
		\left. \begin{matrix}
			-x&\ \text{for }q>0\\
			x^p-x&\text{for }q=0
		\end{matrix}\right\rbrace \in p(\mathrm{MGL}^{2q,q}(X)_{(p)})+L_{>0}\mathrm{MGL}^{2\ast,\ast}(X)_{(p)}.
	\end{equation*}
	Here, $L$ is the Lazard ring, and we understand $(\mathrm{MGL})^{2\ast,\ast}(X)_{(p)}$ as a graded module over $L$, with module structure induced from the universal orientation on $\MGL$.
\end{lemma}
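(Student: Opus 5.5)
We follow Quillen's original argument \cite{BordismQuillen}, using \Cref{Prop: GeneralformulaforPCp} as input and \Cref{Prop: MGL-cohomologyforBetmup} (equivalently \Cref{prop: scheme version of quillens technical lemma}) to control the $t$-torsion noted in \Cref{rmk: Havettorsion}. Throughout we identify $C_p\cong\mu_p$ using the primitive root of unity in $k$.

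\textbf{Step 1: the power operation on $X\times\Bet{\mu_p}$.} For $x\in\MGL^{2q,q}(X)$ we apply \Cref{Prop: GeneralformulaforPCp} (after reducing to reduced cohomology via the base point) and obtain, for $m\gg0$,
\begin{equation*}
w^m P_{C_p}(x)=\sum_{|\alpha|\leq q+m} w^{q+m-|\alpha|}a_\alpha(t)s_\alpha(x)\in \MGL^{\ast,\ast}(X\times\Bet{\mu_p}).
\end{equation*}
Since $\Sigma^\infty_+X$ is a finite cell complex, a cell induction (five lemma) combined with \Cref{exmpl: twisted oriented cohomology} yields a Künneth description $\MGL^{\ast,\ast}(X\times\Bet{\mu_p})\cong\MGL^{\ast,\ast}(X)\otimes_{\MGL^{\ast,\ast}}\MGL^{\ast,\ast}(\Bet{\mu_p})$. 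Moreover, the exact sequence of \Cref{Prop: MGL-cohomologyforBetmup} remains valid with coefficients in $\MGL^{X}$. In particular, an element of the form $x$ with $t\cdot x=0$ lies in $\langle p\rangle(t)\cdot\MGL^{\ast,\ast}(X)$.

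\textbf{Step 2: cancelling $w$.} Recall that $w=\prod_{i=1}^{p-1}[i](t)=(p-1)!\,t^{p-1}\cdot(\text{unit})$ modulo decomposables in $L$. We rewrite the identity above as $w^m\bigl(P_{C_p}(x)-\sum_{|\alpha|\le q}w^{q-|\alpha|}a_\alpha s_\alpha(x)\bigr)=(\text{terms with }|\alpha|>q)$. The terms with $|\alpha|>q$ carry negative powers of $w$ after formal division and must be analysed separately. Following Quillen, we instead work in the localization $\MGL^{\ast,\ast}(X\times\Bet{\mu_p})[t^{-1}]$. There the classes annihilated by $t$ die, and modulo the image of $\langle p\rangle(t)$ the localized ring is $\MGL^{\ast,\ast}(X)((t))/[p](t)$. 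In this localization the formula gives an equality of Laurent series in $t$ with coefficients in $\MGL^{\ast,\ast}(X)$. By \Cref{Prop: MGL-cohomologyforBetmup}, the ambiguity coming from the $t$-torsion in the unlocalized ring is exactly a multiple of $\langle p\rangle(t)=p+(\text{terms in }L_{>0}[[t]])$.

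\textbf{Step 3: comparing coefficients.} We restrict along $\iota:X\to X\times\Bet{\mu_p}$, which by the restriction remark sends $P_{C_p}(x)$ to $x^p$, and compare coefficients of $t^{(p-1)q}$ in the Laurent expansion. We use $a_0=1$, and that the $a_\alpha$, $s_\alpha(x)$ with $|\alpha|>0$ carry coefficients in $L_{>0}$ or have positive degree. This gives, in degree $2q$, an identity of the shape $((p-1)!)^{q}x-\delta_{q,0}x^p\in p\,\MGL^{2q,q}(X)_{(p)}+L_{>0}\MGL^{2\ast,\ast}(X)_{(p)}$. Wilson's theorem, $(p-1)!\equiv-1\pmod p$, then yields $-x$ for $q>0$ and $x^p-x$ for $q=0$. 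The sign and power bookkeeping has to be checked carefully: for $q=0$ the $x^p$ term survives because $P_{C_p}(x)$ restricts to $x^p$ in $t$-degree zero.

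\textbf{Main obstacle.} The hard part is Step 2: justifying that the $t$-torsion in $\MGL^{\ast,\ast}(X\times\Bet{\mu_p})$ contributes only multiples of $\langle p\rangle(t)$. This requires extending \Cref{Prop: MGL-cohomologyforBetmup} to $X$-twisted coefficients, by induction on cells using finiteness of $\Sigma^\infty_+X$, and identifying $\langle p\rangle(t)\equiv p$ modulo $L_{>0}$ and $t$. Once this is in place, $p$-localization makes every nonzero integer prime to $p$ invertible, and Steps 1 and 3 are bookkeeping.
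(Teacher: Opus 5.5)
Your route (invert $t$, then compare Laurent coefficients) differs from the paper's descending induction on the power of $t$. As written it has two genuine gaps.

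First, Step 3 compares the wrong coefficient. Restriction along $\iota$ only identifies the \emph{constant} coefficient of $P_{C_p}(x)$ with $x^p$. Its coefficient of $t^{(p-1)q}$ is an unconstrained class of bidegree $(2q,q)$, so for $q>0$ your comparison gives nothing. In fact the conclusion fails for $q>0$ in this grading. Take $X=S^{2q,q}$ and $x=\sigma^q_{\mathbb{T}}(1)$. Then $P_{C_p}(x)=w^qx$ by \Cref{prop: stabilityofpoweroperations}, so the $t^{(p-1)q}$-coefficient of $P_{C_p}(x)$ is $((p-1)!)^qx$, not $0$. Moreover $\MGL^{2q,q}(X)_{(p)}=\Z_{(p)}\cdot x$, and the $(2q,q)$-part of $L_{>0}\MGL^{2\ast,\ast}(X)$ vanishes by connectivity, so $-x$ is not in the asserted subgroup.

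The paper's proof in effect treats $x$ of bidegree $(-2q,-q)$. It works with $A:=w^qP_{C_p}(x)-x$ and $N=(m-q)(p-1)$, and the identity $t^NA=\psi(t)$ (with $\psi$ the $\theta$-rescaled sum over $\alpha\neq0$) follows from \Cref{Prop: GeneralformulaforPCp} exactly when the $\alpha=0$ term there is $w^{m-q}x$. Then $A$ restricts at $t=0$ to $-x$ for $q>0$ and to $x^p-x$ for $q=0$; the latter is the only case used in \Cref{prop: firstinduction}. Wilson's theorem plays no role, and your factor $((p-1)!)^q$ is an artifact of reading off the wrong coefficient.

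Also, the $\alpha\neq0$ terms do not drop out because the $a_\alpha$ have coefficients in $L_{>0}$. They do not: the one attached to $c_1^{p-1}$ has constant term $1$, since $\iota^\ast P_{C_p}(c_1)=c_1^p$. What kills them is a degree count: in the coefficient $\psi_N$ of $t^N$, the $L$-coefficient in front of $s_\alpha(x)$ has degree $2\sum_j j\alpha_j>0$.

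Second, the structural input of Steps 1--2 is asserted rather than proved, and it is exactly your ``main obstacle''. Neither the K\"unneth isomorphism for $X\times\Bet{\mu_p}$ nor $\MGL^{\ast,\ast}(X\times\Bet{\mu_p})[t^{-1}]\cong\MGL^{\ast,\ast}(X)((t))/[p](t)$ is proved in your proposal or in the paper, and neither is automatic. For a general oriented $E$ the Gysin sequence need not collapse to $E^{\ast,\ast}[[t]]/[p](t)$; for instance $(\HZ/p)^{\ast,\ast}(\Bet{\mu_p})$ has an extra generator in bidegree $(1,1)$. Also $\MGL^{\ast,\ast}(X)$ may have $p$-torsion, e.g.\ for the Moore objects $Q_s$ the lemma is applied to. Even granting the description, you cannot restrict along $\iota$ after inverting $t$. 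And \Cref{Prop: MGL-cohomologyforBetmup} controls only classes killed by $t$, not the $t$-power torsion that the localization kills.

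The paper needs none of this. It pulls the identity back to the Totaro model $X\times\bar{U}_{N+1}$, where it reads $t^NA=\psi(t)$. Restricting to the base point gives $\psi(0)=0$, so $\psi=t\varphi$ and $t\,(t^{N-1}A-\varphi)=0$. Then \Cref{prop: scheme version of quillens technical lemma}, with $X$-twisted coefficients, gives $t^{N-1}A=\varphi+\langle p\rangle(t)\,y$ on $X\times\bar{U}_N$ for some $y\in\widetilde{\MGL}^{\ast,\ast}(X)_{(p)}$. Iterate this $N$ times (the paper phrases it via a minimal $N$) and use $\langle p\rangle(t)\in p+tL_{>0}[[t]]$. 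A final restriction to the base point gives $A|_{t=0}\in\psi_N+p\,\widetilde{\MGL}^{\ast,\ast}(X)_{(p)}+L_{>0}\widetilde{\MGL}^{\ast,\ast}(X)_{(p)}$, and $\psi_N$ lies in the last summand by the degree count above. This uses only exactness on the finite models, never a global description of $\MGL^{\ast,\ast}(X\times\Bet{\mu_p})$.

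If you did have $\MGL^{\ast,\ast}(X\times\Bet{\mu_p})\cong\MGL^{\ast,\ast}(X)[[t]]/[p](t)$, as for $MU$ in topology, your idea would work without any localization. You would map to $(\MGL^{\ast,\ast}(X)/(p,L_{>0}))[[t]]$, where $[p](t)$ becomes $0$, and compare coefficients of $t^N$. That is shorter, but it is exactly the input the motivic setting does not provide.
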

\begin{proof}
	The proof follows now closely the topological proof as in  \cite[Theorem 2.6.3]{FGBO}.
	
	By Proposition \ref{Prop: GeneralformulaforPCp} for $m\gg0$, there exists a formula for $x\in\widetilde{\mathrm{MGL}}^{2q,q}(X)$
	\begin{align}
		\begin{split}
			\label{eq: initialformula}
			w^mP_{C_p}(x)&= \sum_{|\alpha|\leq q+m}w^{q+m-|\alpha|}a_\alpha(t) s_\alpha(x)\\
			&=w^{q+m}x+ \underset{|\alpha|\neq 0}{\sum_{|\alpha|\leq q+m}}w^{q+m-|\alpha|}a_\alpha(t) s_\alpha(x).
		\end{split}
	\end{align}
	in $\widetilde{\MGL}^{2(qp+m-1),qp+m-1}(X\times\Bet{\mu_p})$. Using Totaro's geometric model $\Bet{\mu_p}\cong\mathrm{colim}_n (\A^n\setminus \{0\})/\mu_p$, we can further pullback to $\bar{U}_n:=(\A^n\setminus\{0\})/\mu_p$ for all $n$ and hence obtain the same relation as above in $\widetilde{\MGL}^{2(qp+m-1),qp+m-1}(X\times \bar{U}_n)$ for all $n$, with $t$ replaced by $t_n$, the Euler class of the canonical bundle over $\bar{U}_n$. To unify notation, we will from now on set $t:=t_n$, where the $n$ where the $n$ of course depends on the context.
	
	Recall that $w$ is a power series with coefficients in the subring $L\subseteq\pi_\ast(\MGL)_\ast$, where, by abuse of notation, we identified the Lazard ring with its image in $\pi_\ast(\MGL)_\ast$. The leading term of $w$ is $(p-1)!t^{p-1}$. After localizing at $p$, multiplication by $(p-1)!$ acts invertibly. Thus, we may write $t^{p-1}=w\theta(t)$, where $\theta(t)$ is a multiplicativly invertible power series with coefficients in $L$.
	
	Set $N=(m-q)(p-1)$. For all $m$ we obtain an equation
	\begin{equation}
		\label{eq: Fundermentalequationforquillen}
		t^{N}(w^{q}P_{C_p}(x)-x)=\theta(t)^{N}\underset{|\alpha|\neq 0}{\sum_{|\alpha|\leq q+m}}w^{q+m-|\alpha|}a_\alpha(t) s_\alpha(x)=:\psi_x(t)
	\end{equation}
	in $\widetilde{\MGL}^{2(qp+m-1),qp+m-1}(X\times \bar{U}_{m})$, where $\psi_x$ is a power series with coefficients in $L_{>0}(\widetilde{\mathrm{MGL}})^{2\ast,\ast}(X)_{(p)}$. In particular, this equation is true for $m=N+1$
	
	Consider now the set of all possible $N>0$ for which there exists a formula of the form
	\begin{equation*}
		t^{N}(w^{q}P_{C_p}(x)-x)=\psi_x(t)\in \widetilde{\MGL}^{2(qp+m-1),qp+m-1}(X\times \bar{U}_{N+1})
	\end{equation*}
	as in \eqref{eq: Fundermentalequationforquillen}, that is with $\psi_x$ a power series with coefficients in $L_{>0}(\widetilde{\mathrm{MGL}})^{2\ast,\ast}(X)_{(p)}$. This set is non-empty as it contains at least $N=(m-q)(p-1)$. Let $N$ be the minimal such value. Pulling back along the inclusion at the base point $X\to X\times \bar{U}_n$ in particular sets $t=0$, as under this pullback, the canonical line bundle on $\bar{U}_n$ becomes trivial, hence the top Chern class vanishes. So, from the above formula, we obtain $\psi(0)=0$ and hence $\psi_x(t)=t\varphi(t)$ is at least once divisible by $t$. We obtain the new equation
	\begin{equation*}
		t(t^{N-1}(w^{q}P_{C_p}(x)-x)-\varphi(t))=0.
	\end{equation*}
	Due to our setup, this equation holds in $(\widetilde{\MGL})^{2\ast,\ast}(X\times \bar{U}_{N+1})_{(p)}$, so we can apply \Cref{prop: scheme version of quillens technical lemma} to find an element $y\in(\widetilde{\MGL})^{2\ast,\ast}(X)_{(p)}$, such that
	\begin{equation*}
		t^{N-1}(w^{q}P_{C_p}(x)-x)=\varphi(t)+\langle p\rangle(t)y\in \widetilde{\MGL}^{2\ast,\ast}(X\times \bar{U}_{N})_{(p)}.
	\end{equation*}
	If now $N>1$, then $y\in L_{>0}(\widetilde{\MGL})^{\ast,\ast}(X)$ for degree reasons. So that we obtain a new equation of the form\footnote{Notice the drop in the exponent of $t$ as well as in the index of $\bar{U}_N$. This is the reason why we had to choose $m$ to be at least $N+1$, so that we can do this part of the induction.} 
	\begin{equation}
		t^{N-1}(w^{q}P_{C_p}(x)-x)=\psi'_x(t)\in \widetilde{\MGL}^{2(qp+m-1),qp+m-1}(X\times \bar{U}_{N}).
	\end{equation}
	As $N-1<N$, this contradicts the minimality assumption on $N$, hence $N=1$. We obtain, by pulling back once again along the inclusion $X\to X\times \bar{U}_N$, the expression
	\begin{equation*}
		\left. \begin{matrix}
			-x&\ \text{for }q>0\\
			x^p-x&\text{for }q=0
		\end{matrix}\right\rbrace = py+ \varphi(0)\in p(\widetilde{\MGL})^{2q,q}(X)_{(p)}+L_{>0}(\widetilde{\MGL})^{2\ast,\ast}(X)_{(p)}.
	\end{equation*}
\end{proof}
	\section{\texorpdfstring{The homotopy groups of $\MGL$}{The homotopy groups of \MGL}}
In this section we calculate the homotopy groups of $\MGL^\wedge_p:=\MGL^\wedge_p(k)\simeq \MGL(k)^\wedge_p$, for $k$ a field containing all primitive $p$-th power roots.

\begin{proposition}
	\label{prop: existence of morphism}
	Fix a prime $p$, and let $k$ be a field containing all primitive $p$-power roots of unity. There exists a morphism
	\begin{equation}
		\label{RtoMGLiso}
		R_{\ast}(k)_{\ast}:=(L)_\ast\otimes_{\Z}\pi_{\ast}(\HZp)_\ast\cong(L^\wedge_p)_\ast\otimes_{\mathbb{Z}^\wedge_p} \KM{\ast}{k}^\wedge_p\otimes_{\mathbb{Z}^\wedge_p} \mathbb{Z}^\wedge_p\left[\tau\right]\to \pi_{\ast}(\MGL^\wedge_p)_\ast
	\end{equation}
	unique up to a choice of Tate twist $\tau\in\pi_1(\MGL^\wedge_p)_1$, see \Cref{prop: tatetwistinmgl}.
\end{proposition}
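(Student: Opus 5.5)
The morphism is assembled from three ring maps into the graded-commutative ring $\pi_\ast(\MGL^\wedge_p)_\ast$, one for each tensor factor, and then the universal property of the tensor product of commutative rings combines them. All three maps land in a commutative ring: $\MGL^\wedge_p$ is a normed, in particular $\mathbb{E}_\infty$, ring spectrum, so its bigraded homotopy groups form a graded-commutative ring. Only the even-diagonal bidegrees $(2n,n)$, the $(0,\ast)$ line and $(1,1)$ are hit by generators, so no sign issues arise beyond those already present in $\pi_\ast(\HZp)_\ast$.

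\emph{Lazard factor.} The universal orientation of $\MGL$ gives a formal group law over $\pi_{2\ast}(\MGL)_\ast$, hence by Lazard's theorem a ring map $L\to\pi_{2\ast}(\MGL)_\ast$. Composing with $\MGL\to\MGL^\wedge_p$ gives $L\to\pi_\ast(\MGL^\wedge_p)_\ast$. Since the target is derived $p$-complete in each bidegree and $L$ is of finite type, this extends uniquely to $L^\wedge_p$, consistent with the identification $L\otimes_\Z \pi_\ast(\HZp)_\ast\cong L^\wedge_p\otimes_{\Z^\wedge_p}\pi_\ast(\HZp)_\ast$.

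\emph{Milnor $K$-theory factor.} The corollary identifying $\pi_0(\MGL^\wedge_p)_\ast\cong\KM{\ast}{k}^\wedge_p$ gives a ring map $\KM{\ast}{k}^\wedge_p\to\pi_\ast(\MGL^\wedge_p)_\ast$ with image in the $0$-line; it is canonical, coming from $\MGL_{\le0}\simeq\HZ_{\le0}$.

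\emph{Tate twist factor.} By \Cref{prop: tatetwistinmgl}, a compatible system of $p$-power roots of unity gives an element $\tau\in\pi_1(\MGL^\wedge_p)_1$. Sending the polynomial variable to it defines $\Z^\wedge_p[\tau]\to\pi_\ast(\MGL^\wedge_p)_\ast$. This is the only non-canonical input, matching the clause ``unique up to a choice of Tate twist''.

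Tensoring the three maps over $\Z^\wedge_p$ gives the desired morphism, and \Cref{cor: homotopygroupsofZpcompleted} supplies the identification $\pi_\ast(\HZp)_\ast\cong\KM{\ast}{k}^\wedge_p[\tau]$ appearing in the source. Uniqueness holds because $R_\ast(k)_\ast$ is generated as a ring by $L$, $\KM{\ast}{k}^\wedge_p$ and $\tau$, and the first two maps are forced by the orientation and the $\pi_0$-identification. The choice of $\tau$ is ambiguous exactly up to the kernel of $\pi_1(\MGL^\wedge_p)_1\to\KM{1}{k}^\wedge_p$-data, as in \Cref{Remark: Tate twist in MGL}.

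The main technical point is the $p$-adic extension in the first and third steps. For $\Z^\wedge_p[\tau]$ and for $L^\wedge_p$ one must check that $\pi_{s}(\MGL^\wedge_p)_w$ is a $\Z^\wedge_p$-module so that the $\Z$-linear maps extend. I would use the Milnor sequence as in the corollary identifying $\pi_0(\MGL^\wedge_p)_\ast$: it exhibits each group as an extension of derived $p$-complete groups, which is derived $p$-complete and hence carries a unique $\Z^\wedge_p$-module structure. Once this is in place, the extension is automatic. Commutativity of the target, needed for the tensor-product universal property, is immediate from the normed structure.
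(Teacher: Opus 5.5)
Your proposal is correct and is essentially the paper's argument: the morphism is assembled from the orientation map on $L$, the identification $\KM{\ast}{k}^\wedge_p\cong\pi_0(\MGL^\wedge_p)_\ast$, and the element $\tau$ from \Cref{prop: tatetwistinmgl}, combined by the universal property of the tensor product. Your extra remarks supply details the paper leaves implicit, namely derived $p$-completeness of the target to justify the extensions over $\mathbb{Z}^\wedge_p$ and $L^\wedge_p$, and commutativity of the target. Only your sentence about the ambiguity in $\tau$ is loosely phrased: \Cref{Remark: Tate twist in MGL} concerns the choice of the compatible system of roots of unity, not the kernel you describe.
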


\begin{proof}
	By the isomorphism \eqref{MGLMilnorK} there is a canonical morphism 
	\begin{equation*}
		\KM{\ast}{k}^\wedge_p\cong \pi_{0}(\MGL^\wedge_p)_{\ast}\to \pi_{\ast}(\MGL^\wedge_p)_{\ast}.
	\end{equation*}
	As $\MGL^\wedge_p$ is oriented by the canonical morphism $\MGL\to\MGL^\wedge_p$. Hence, by the universal property of the Lazard ring, there is a canonical morphism 
	\begin{equation*}
		L_\ast\to\pi_{\ast}(\MGL^\wedge_p)_{\ast}.
	\end{equation*}
	All these morphisms are morphisms of bigraded rings, where we consider $\KM{\ast}{k}$ to be concentrated in the negative diagonal of $\pi_{\ast}(\MGL)_{\ast'}$, and $\left(L\right)_\ast$ in the positive degrees $(2\ast,\ast),\ast\geq0$. 
	
	By the universal property of the tensor product it therefore suffices to construct a morphism 
	\begin{equation*}
		\mathbb{Z}^\wedge_p\left[\tau\right]\to\pi_{\ast}\left(\MGL^\wedge_p\right)_\ast,
	\end{equation*}
	which amounts to constructing an element $\tau\in\pi_{1}(\MGL^\wedge_p)_1$. This was done in \Cref{prop: tatetwistinmgl}, and is where we need that $k$ contains all $p$-power roots of unity.  
\end{proof}

The first step is to show that this morphism is an isomorphism in negative topological degrees.

\begin{lemma}
	\label{isoabovediagonal}
	The morphism
	\begin{equation}
		R_{\ast}(k)_{\ast}\to \pi_{\ast}(\MGL^\wedge_p)_{\ast}
	\end{equation}
	constructed above is an isomorphism in bidegrees $\left(s,\ast\right)$ for $s\leq 0, \ast\in \mathbb{Z}$.
\end{lemma}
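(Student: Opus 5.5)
We need to prove that $R_\ast(k)_\ast\to\pi_\ast(\MGL^\wedge_p)_\ast$ is an isomorphism in bidegrees $(s,\ast)$ with $s\le 0$. The plan is to treat $s<0$ and $s=0$ separately. In each case we first identify the source in those bidegrees, then the target.

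\textbf{The source.} By \Cref{cor: homotopygroupsofZpcompleted} we have $\pi_\ast(\HZp)_\ast\cong\KM{\ast}{k}^\wedge_p[\tau]$. Here Milnor $K$-theory sits in topological degree $0$ and $\tau$ in bidegree $(1,1)$, so $\pi_s(\HZp)_\ast$ vanishes for $s<0$. The Lazard ring $L$ lives in bidegrees $(2j,j)$ with $j\ge 0$, so both factors of $R$ are concentrated in non-negative topological degree. Hence
\begin{equation*}
R_s(k)_\ast=0 \quad\text{for } s<0,
\end{equation*}
and only the part $L_0\otimes\KM{\ast}{k}^\wedge_p\otimes\tau^0=\KM{\ast}{k}^\wedge_p$ contributes in degree $s=0$.

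\textbf{The case $s<0$.} We show $\pi_s(\MGL^\wedge_p)_\ast=0$. By \Cref{Cor: MGLMilnorKthy}, $\pi_s(\MGL)_\ast=0$ for $s<0$. The Milnor sequence for derived $p$-completion from the proof of \eqref{MGLMilnorK} reads
\begin{equation*}
0\to\mathbb{L}_0\pi_s(\MGL)_n\to\pi_s(\MGL^\wedge_p)_n\to\mathbb{L}_1\pi_{s-1}(\MGL)_n\to 0.
\end{equation*}
For $s<0$ both outer terms vanish, so the middle term does too. Equivalently, $p$-completion preserves connectivity in the homotopy $t$-structure.

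\textbf{The case $s=0$.} We identify the map with the isomorphism \eqref{MGLMilnorK}. By construction in \Cref{prop: existence of morphism}, the map restricted to $\KM{\ast}{k}^\wedge_p$ is exactly the canonical inclusion
\begin{equation*}
\KM{\ast}{k}^\wedge_p\cong\pi_0(\MGL^\wedge_p)_\ast\to\pi_\ast(\MGL^\wedge_p)_\ast.
\end{equation*}
Its composite with projection to degree $0$ is therefore the isomorphism \eqref{MGLMilnorK}.

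\textbf{Main obstacle.} The only subtlety is bookkeeping. One must check that no other summand of $R$ lands in topological degree $0$: positive-degree elements of $L$ and positive powers of $\tau$ both raise $s$ strictly. One must also confirm that the unit $L_0=\Z$, after tensoring, gives $\Z^\wedge_p$-coefficients compatible with the $p$-completed Milnor $K$-theory. This holds because $L_0\otimes_\Z\pi_0(\HZp)_\ast=\pi_0(\HZp)_\ast$, and the orientation map $\MGL^\wedge_p\to\HZp$ identifies the two copies of $\KM{\ast}{k}^\wedge_p$ via \Cref{prop: MGLandHZsame}.
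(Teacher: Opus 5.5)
Your proposal is correct and follows the paper's proof. For $s<0$, the source vanishes for degree reasons, and $\pi_s(\MGL^\wedge_p)_\ast=0$ follows from the Milnor short exact sequence for $p$-completion (the paper writes it with $H^i\mathbb{R}\mathrm{Hom}(\Z/p^\infty,-)$, which is the same as your $\mathbb{L}_0,\mathbb{L}_1$ form) together with connectivity of $\MGL$; for $s=0$, the map is by construction the isomorphism $\KM{\ast}{k}^\wedge_p\cong\pi_0(\MGL^\wedge_p)_\ast$.
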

\begin{proof}
	For $s=0$, the morphism is an isomorphism by construction, being the inclusion of $\pi_0(\MGL^\wedge_p)_\ast\hookrightarrow \pi_\ast(\MGL^\wedge_p)_\ast$.
	
	For $s\leq -1$, the left hand vanishes by construction, so we only have to show that $\pi_{s}(\MGL^\wedge_p)_\ast$ vanishes to. For this we use the Milnor short exact sequence from \cite[Lemma 7.3.9]{CnossenIntrod}
	\begin{equation*}
		0\to H^1\mathbb{R}\mathrm{Hom}(\Z/p^\infty, \pi_{s}(\MGL)_\ast)\to\pi_{s}(\MGL^\wedge_p)\to H^0\mathbb{R}\mathrm{Hom}(\Z/p^\infty, \pi_{s-1}(\MGL))\to 0.
	\end{equation*}
	By connectivity of $\MGL$, the outer groups vanish again, and hence so does the middle one.
\end{proof}
\begin{remark}
	Notice that for the topological homotopy groups to appear here, we really needed to work on global sections. This is du to the fact that the Milnor short exact sequence as stated in the proof of \Cref{isoabovediagonal} does not exist in this form for motivic spectra in general, but it does for topological spectra, see \cite{mattis2024unstable}.
\end{remark}

For positive degrees the proof becomes quite a bit harder. We will start with injectivity, following closely the idea of how one shows injectivity in the Lazard theorem, see \cite[Lecture 2 and 3]{luriechromatic}, and how Quillen proved that the canonical morphism $L_\ast\to\mathrm{MU}_{\ast}$ is an isomorphism. However, before the actually proof injectivity, we will make a general command on the $p$-completion of Milnor-$K$ theory for the field $k$.
\begin{lemma}
	\label{Lem: MilnorKthyisptorsionfree}
	The $p$-adic completion of the Milnor-K theory $\KM{\ast}{k}^\wedge_p$ is p-torsion free. 
\end{lemma}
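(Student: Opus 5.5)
The plan is to reduce $p$-torsion freeness of the completion to the vanishing of a mod $p$ inverse limit. Write $K_n:=\KM{n}{k}$ for a fixed degree $n$ (the statement is degreewise) and $K_n^\wedge_p=\lim_m K_n/p^m$. Suppose $x=(x_m)_m\in K_n^\wedge_p$ satisfies $px=0$, so $px_{m+1}=0$ in $K_n/p^{m+1}$ for every $m$. We must show $x_m=0$ for all $m$. The key input is an injectivity statement for multiplication by $p$ on the tower, which we will derive from the norm residue theorem together with the hypothesis that $k$ contains all $p$-power roots of unity.

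First, observe the identity $p\cdot x_{m+1}=0$ in $K_n/p^{m+1}$, i.e.\ $p\tilde x\in p^{m+1}K_n$ for a lift $\tilde x\in K_n$. It would suffice to know that the $p$-torsion of $K_n$ is divisible enough, namely that $p\tilde x=p^{m+1}y$ forces $\tilde x-p^m y\in {}_pK_n$ and that ${}_pK_n\subseteq p^{m}K_n+(\text{something mapping to }0)$. Concretely, I would use the standard consequence of Merkurjev--Suslin/norm residue (Suslin's theorem): if $\zeta_p\in k$, then ${}_pK_n(k)=\{\zeta_p\}\cdot K_{n-1}(k)$. Since $k$ contains $\zeta_{p^{m+1}}$, we have $\{\zeta_p\}=p^m\{\zeta_{p^{m+1}}\}$, so every $p$-torsion element of $K_n$ lies in $p^mK_n$ for every $m$. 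Hence $\tilde x\in p^mK_n$, so $x_m=0$ in $K_n/p^m$. As $m$ is arbitrary, $x=0$.

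The main obstacle is justifying ${}_pK_n(k)=\{\zeta_p\}K_{n-1}(k)$. I would try to deduce it from \Cref{Thm: Norm-Residue-Iso} and \Cref{Norm-residue-isomorphism}: the Bockstein sequence for $\HZ\xrightarrow{p}\HZ\to\HZ/p$ in bidegree weight $n$ gives $H^{n-1,n}(k,\Z/p)\to H^{n,n}(k,\Z)\xrightarrow{p}H^{n,n}(k,\Z)$, so ${}_pK_n$ is the image of $H^{n-1,n}(k,\Z/p)$, which by \Cref{Norm-residue-isomorphism} equals $\tau\cdot K_{n-1}/p$. Then one identifies the Bockstein of $\tau$ with $\{\zeta_p\}$ (the integral lift of the root of unity in $H^{1,1}=k^\times$), using multiplicativity of the connecting map in the $\HZ$-module structure. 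If this identification is delicate, I would fall back on citing Suslin's theorem directly.
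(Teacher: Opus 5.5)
Your proof is correct, but it is organized differently from the paper's. The paper never looks at torsion in $\KM{n}{k}$ itself. It takes the cofiber sequence $\HZp\xrightarrow{p}\HZp\to\HZ/p$ and uses $\pi_\ast(\HZp)_\ast\cong\KM{\ast}{k}^\wedge_p[\tau]$ from \Cref{cor: homotopygroupsofZpcompleted}. From this it sees that reduction $H^{n-1,n}(k,\Z^\wedge_p)\cong\KM{n-1}{k}^\wedge_p\{\tau\}\to\KM{n-1}{k}/p\,\{\tau\}\cong H^{n-1,n}(k,\Z/p)$ is surjective. Hence the connecting map vanishes and $p$ is injective on $H^{n,n}(k,\Z^\wedge_p)\cong\KM{n}{k}^\wedge_p$. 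The hypothesis on $p$-power roots of unity enters only through the existence of the $p$-adic lift $\tau$.

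You instead run the integral Bockstein for $\HZ\xrightarrow{p}\HZ\to\HZ/p$ to get ${}_p\KM{n}{k}=\delta(\tau)\cdot\KM{n-1}{k}$ with $\delta(\tau)\in\mu_p(k)\subseteq k^\times$. You use the hypothesis in the form $\mu_p(k)\subseteq(k^\times)^{p^m}$ for all $m$. You finish with an elementary argument that the transition maps in the tower $\{{}_p(\KM{n}{k}/p^m)\}_m$ are zero, so its limit ${}_p(\KM{n}{k}^\wedge_p)$ vanishes. Both arguments rest on the same two inputs: the norm residue isomorphism in bidegree $(n-1,n)$, and infinite $p$-divisibility of $\zeta_p$ in $k^\times$. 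The paper's version is shorter once \Cref{cor: homotopygroupsofZpcompleted} is available. Yours bypasses $\HZp$ and the Milnor sequence entirely and makes it transparent exactly where the roots of unity are used.

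Two small comments. First, make the Bockstein derivation the proof rather than something with a fallback. Suslin's theorem proper concerns $K_2$; for general $n$ the identity ${}_p\KM{n}{k}=\{\zeta_p\}\cdot\KM{n-1}{k}$ is exactly what \Cref{Norm-residue-isomorphism} gives through your argument. The identification you were unsure about is unproblematic. $\delta$ is $\HZ$-linear because the cofiber sequence is one of $\HZ$-modules, so $\delta(\tau\cdot\bar a)=\pm\delta(\tau)\cdot a$ for $a\in\KM{n-1}{k}$. Since $H^{0,1}(k,\Z)=0$, the map $\delta\colon\mu_p(k)=H^{0,1}(k,\Z/p)\to H^{1,1}(k,\Z)=k^\times$ is injective with image $\mu_p(k)$. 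So $\delta(\tau)$ is some primitive $p$-th root of unity, which is all you need; you do not have to pin down which one.

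Second, the remark preceding the lemma in the paper suggests that the ``infinitely divisible roots of unity'' route would hinge on an open conjecture of Tate. Whatever the status of the general conjecture cited there, the special case you use ($p$-torsion, with $\zeta_p\in k$) follows from the norm residue isomorphism exactly as you sketch. So that caveat does not apply to your proof.
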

\begin{remark}
	This is less trivial then it may seem. One could try to argue that the $p$-th power roots of unity are infinitely $p$-divisible (at least in our case). However, it is an open conjecture by Tate, whether all $p$-torsion of $\KM{\ast}{k}$ is determined by the $p$-th root of unity, at least as long as $\mathrm{char}(k)\neq p$, see \cite[Conjecture 2]{izhb}. Instead we need to utilize the norm-residue isomorphism, see \Cref{Thm: Norm-Residue-Iso}.
\end{remark}
\begin{proof}
	Consider the cofiber sequence of motivic spectra,
	\begin{equation*}
		\HZ^\wedge_p\xrightarrow{\cdot p}\HZ^\wedge_p\to\HZ/p.
	\end{equation*}
	The cofiber sequence induces the upper row in the following commutative diagram, while the Milnor $K$-theory appears due to the naturality of the norm-residue isomorphism:
	\[\begin{tikzcd}[column sep= tiny]
		{H^{n-1,n}\left(k,\mathbb{Z}^\wedge_p\right)} & {H^{n-1,n}\left(k,\mathbb{Z}/p\right)} & {H^{n,n}\left(k,\mathbb{Z}^\wedge_p\right)} & {H^{n,n}\left(k,\mathbb{Z}^\wedge_p\right)} & {H^{n,n}\left(k,\mathbb{Z}/p\right)} \\
		{\left(\KM{n-1}{k}^\wedge_p\right)\left\{\tau\right\}} & {\left(\KM{n-1}{k}/p\right)\left\{\tau\right\}} & {\KM{n}{k}^\wedge_p} & {\KM{n}{k}^\wedge_p} & {\KM{n}{k}/p}
		\arrow[from=1-1, to=1-2]
		\arrow[from=1-2, to=1-3]
		\arrow["\cdot p", from=1-3, to=1-4]
		\arrow[from=1-4, to=1-5]
		\arrow[two heads, from=2-1, to=2-2]
		\arrow["\cong"', from=2-1, to=1-1]
		\arrow["\cong"', from=2-2, to=1-2]
		\arrow["\cong"', from=2-3, to=1-3]
		\arrow["\cdot p", from=2-3, to=2-4]
		\arrow["\cong"', from=2-4, to=1-4]
		\arrow[two heads, from=2-4, to=2-5]
		\arrow["\cong"', from=2-5, to=1-5].
	\end{tikzcd}\]
	
	The morphism $\left(\KM{n-1}{k}^\wedge_p\right)\left\{\tau\right\}\to \left(\KM{n-1}{k}/p\right)\left\{\tau\right\}$ is the natural projection, hence surjective. It follows that the morphism $H^{n-1,n}\left(k,\mathbb{Z}^\wedge_p\right)\to H^{n-1,n}\left(k,\mathbb{Z}/p\right)$ is surjective, so $H^{n,n}\left(k,\mathbb{Z}^\wedge_p\right)\to H^{n,n}\left(k,\mathbb{Z}^\wedge_p\right)$ is injective. This in turn implies that $\KM{n}{k}^\wedge_p\xrightarrow{\cdot p}\KM{n}{k}^\wedge_p$ is injective. This argument is independent of $n\geq 1$, implying the claim.
\end{proof}
This lemma actually shows that $\KM{\ast}{k}^\wedge_p$ is completely $\mathbb{Z}$-torsion free. This follows from $\KM{\ast}{k}^\wedge_p$ being a $\mathbb{Z}_p^\wedge$-module. Hence all other primes except $p$ are invertible, implying that $\KM{\ast}{k}^\wedge_p$ can at most have $p$-torsion, which is taken care of by the lemma.
\begin{proposition}
	\label{prop: Mainmorphismisinjective}
	The morphism \eqref{RtoMGLiso} is injective.
\end{proposition}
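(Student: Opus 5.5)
I would prove injectivity of $R_\ast(k)_\ast\to\pi_\ast(\MGL^\wedge_p)_\ast$ by mapping further to the $p$-completed motivic cohomology, where the target is known explicitly, and detecting the Lazard part by a Hurewicz-type argument.

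First, the orientation gives a ring map $\MGL^\wedge_p\to\HZ^\wedge_p$. By \Cref{prop: tatetwistinmgl} it sends $\tau\mapsto\tau$, and by \Cref{prop: MGLandHZsame} it is the identity on $\KM{\ast}{k}^\wedge_p$. The Lazard generators, however, map to zero, since the formal group law of $\HZ$ is additive. So this map alone detects only the summand $L_0\otimes\KM{\ast}{k}^\wedge_p[\tau]$. To detect all of $L$ I would instead use the ring map
\begin{equation*}
\MGL^\wedge_p\to(\HZ\wedge\MGL)^\wedge_p.
\end{equation*}
By the Thom isomorphism and \cite[Proposition 6.2]{naumann2009motiviclandweberexactness}, the homotopy of the target is $\pi_\ast(\HZp)_\ast[b_1,b_2,\ldots]$, the $p$-completed version of $H_{\ast\ast}\MGL\cong H_{\ast\ast}[b_i]$ for Spanning Chern-class generators $b_i$ in bidegree $(2i,i)$. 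The composite
\begin{equation*}
L\otimes\KM{\ast}{k}^\wedge_p[\tau]\to\KM{\ast}{k}^\wedge_p[\tau][b_1,b_2,\ldots]
\end{equation*}
is $\KM{\ast}{k}^\wedge_p[\tau]$-linear, and on $L$ it is the classical map classifying the strict isomorphism $\exp(x)=\sum b_i x^{i+1}$ from the universal formal group law to the additive one. This is exactly the map $L\to\Z[b_1,b_2,\ldots]$ from the proof of Lazard's theorem \cite[Lecture 2 and 3]{luriechromatic}, which is injective, and in fact a rational isomorphism with $\Z$-torsion-free target.

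It then remains to show that injectivity survives tensoring with $A:=\KM{\ast}{k}^\wedge_p[\tau]$. The map $L\to\Z[b_i]$ is a degreewise injection of finitely generated free $\Z$-modules, so its cokernel $Q$ is a finitely generated abelian group in each degree. Since $\mathrm{Tor}_1^{\Z}(Q,A)$ only involves the torsion of $A$, it suffices that $A$ is $\Z$-torsion free. By \Cref{Lem: MilnorKthyisptorsionfree} and the remark following it, $\KM{\ast}{k}^\wedge_p$ is $\Z$-torsion free, so $A$ is a free module over a torsion-free ring, hence torsion free. Thus $L\otimes A\to\Z[b_i]\otimes A$ is injective, and since it factors through \eqref{RtoMGLiso}, the morphism \eqref{RtoMGLiso} is injective as well.

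The main obstacle I expect is the identification of $\pi_{\ast}((\HZ\wedge\MGL)^\wedge_p)_\ast$ with $\pi_\ast(\HZp)_\ast[b_i]$, and the check that the induced map on $L$ is the classical one. I would handle this by writing $\HZ\wedge\MGL\simeq\bigoplus\Sigma^{2|\alpha|,|\alpha|}\HZ$, a sum of shifted copies indexed by monomials; the splitting follows from the orientation of $\HZ$ and the cellular description of $\MGL$. Since this is a sum of finitely many summands per weight, $p$-completion commutes with it, and \Cref{cor: homotopygroupsofZpcompleted} then gives the homotopy groups. The identification on $L$ is purely formal group theoretic and runs exactly as in topology: the two orientations of $\HZ\wedge\MGL$ differ by the logarithm with coefficients $b_i$.
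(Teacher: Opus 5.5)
Your route is the paper's. You compose with the Hurewicz map to $\pi_\ast((\HZ\wedge\MGL)^\wedge_p)_\ast\cong\pi_\ast(\HZp)_\ast[b_1,b_2,\ldots]$, use injectivity of $L\to\Z[b_1,b_2,\ldots]$, and use \Cref{Lem: MilnorKthyisptorsionfree} to pass to $L\otimes A$ with $A=\KM{\ast}{k}^\wedge_p[\tau]$. Your $\mathrm{Tor}$/flatness step is a cleaner form of the paper's reduction to injectivity after rationalization. You also correctly observe why the orientation $\MGL^\wedge_p\to\HZp$ alone cannot detect $L_{>0}$.

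One assertion is not justified as stated, and the paper makes it in the same way: that the composite is $A$-linear, i.e.\ that the Hurewicz map $h$ sends $\tau$ to $\tau$.

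\Cref{prop: tatetwistinmgl} only concerns the image of $\tau$ under the orientation. The composite $\HZ\wedge\MGL\to\HZ\wedge\HZ\to\HZ$ is the projection onto the summand $\alpha=0$ of $\bigoplus_\alpha\Sigma^{2|\alpha|,|\alpha|}\HZ$, so you only learn the $\alpha=0$ component of $h(\tau)$. In the bidegree of $\tau$ (that of $H^{0,1}$), the only other summand with nonzero homotopy is the one for $b_1$, which contributes $H^{2,2}(k;\Z^\wedge_p)\cong\KM{2}{k}^\wedge_p$. Hence $h(\tau)=\tau+c\,b_1$ for some $c\in\KM{2}{k}^\wedge_p$.

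This $c$ genuinely depends on the choice of $\tau$. If $a_1$ generates $L_2$ and $c'\in\KM{2}{k}^\wedge_p$, then $\tau+c'a_1$ is an equally valid choice in \Cref{prop: tatetwistinmgl}, since $a_1\mapsto 0$ in $\HZ$. But $h(a_1)=\pm 2b_1$, so this shifts $c$ by $\pm 2c'$. That shift is nonzero whenever $c'\neq 0$, because $\KM{2}{k}^\wedge_p$ is torsion free.

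The repair is short. The same degree count shows that $h$ is the expected map on $\KM{\ast}{k}^\wedge_p$ and on $L$. So the actual composite equals $\sigma\circ\Phi_0$, where $\Phi_0$ is the $A$-linear map you analyze and $\sigma$ is the automorphism of $\KM{\ast}{k}^\wedge_p[b_1,b_2,\ldots][\tau]$ over $\KM{\ast}{k}^\wedge_p[b_1,b_2,\ldots]$ sending $\tau\mapsto\tau+c\,b_1$. Your $\mathrm{Tor}$ argument therefore still gives injectivity. Alternatively, filter by powers of $\tau$.

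A minor point in your last paragraph: only finitely many summands contribute per bidegree, not per weight. The per-bidegree finiteness is what lets $p$-completion commute with the sum on homotopy groups.
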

\begin{proof}
	Consider the composition map
	\begin{align}
		\begin{split}
			\label{HurewiczMotivic}
			R_{\ast}(k)_{\ast}\to\pi_{\ast}(\MGL^\wedge_p)_{\ast}\to&\pi_{\ast}((\MGL \otimes_{\mathbb{\Sph}}\mathrm{H}\mathbb{Z})^\wedge_p)_{\ast}\\
			\cong& \pi_{\ast}(\HZp)_{\ast}[c_1,c_2,\ldots]\\
			\cong&\KM{\ast}{k}^\wedge_p\left[c_1,\ldots\right]\left[\tau\right]
		\end{split}
	\end{align}
	Here the second map is the Hurewicz morphism, induced by the unit map $u:\Sph\to \mathrm{H}\mathbb{Z}$. The second isomorphism is \cite[Proposition 6.2]{naumann2009motiviclandweberexactness}. The last isomorphism is again \eqref{Normresidue2}. We will show that this composition is injective, which implies the claim.
	
	
	Recall the definition of $R_\ast(k)_{\ast'}$,
	\begin{equation*}
		R_{\ast}(k)_{\ast}:=\left(L^\wedge_p\otimes_{\mathbb{Z}^\wedge_p} \KM{\ast}{k}^\wedge_p\otimes_{\mathbb{Z}^\wedge_p} \mathbb{Z}^\wedge_p\left[\tau\right]\right)_{\ast,\ast}\cong\KM{\ast}{k}^\wedge_p\left[t_1,\ldots\right]\left[\tau\right].
	\end{equation*}
	Here we used Lazard's theorem to rewrite the Lazard ring as a polynomial ring in infinity many variables $t_i$ of bidegree $(2i,i)$. 
	
	By \Cref{prop: tatetwistinmgl}, this composition sends $\tau$ to $\tau$, using that the isomorphism from \cite[Proposition 6.2]{naumann2009motiviclandweberexactness} is induced from the orientation on $\HZ$. Thus, by \Cref{isoabovediagonal}, all that is left is to understand the image of the Lazard generators. However, by \Cref{Lem: MilnorKthyisptorsionfree}, $\KM{\ast}{k}^\wedge_p$ is $\mathbb{Z}$-torsion free, so it suffices to show injectivity after rationalization. The classical discussion from topology, see \cite[Lecture 7, Corollary 6]{luriechromatic}, carries to the motivic setting, see also \cite[section 6.1]{hoy15}. That is, the map $L\to\Z[b_1,\ldots]$, induced by the formal group law $\sum_{n\geq0}b_nx^n\in\Z[b_1,ldots]$ with $b_0=1$, sends the indecomposables $(I/I^2)_{2n}$ injectivity to $(J/J^2)_{2n}\simeq \Z$, where $I$ is the ideal of $L$ generated by all elements of positive degree, And $J$ is the same in $\Z[b_1,\ldots]$, where $\mathrm{deg}(b_i)=2i$. The image of $(I/I^2)_{2n}$ is $p\Z$ if $n+1=q^e$ for some some prime $q$ and some $e\geq 1$, and $\Z$ else. In particular, after $p$-completion, the map is an isomorphism for all $n+1$ but those which are powers of $p$. Hence, after rationalization, the map becomes an equivalence.
	
\end{proof}

Before we continue the proof we make a short remark about nilpotency in (reduced) cohomology theories.
\begin{lemma}
	\label{Lem: Nilpotency}
	Let $E\in\mathrm{CAlg}(h\SH{S})$ be a homotopy commutative ring spectrum, $X\in\Spc{X}_\ast$ a pointed, connected motivic space. Assume that $X$ is of finite cohomological dimension with respect to $\left\lbrace \underline{\pi}_{q}(E)_\ast\right\rbrace_{q\in\Z}$ coefficients, that is $\tilde{H}^p(X,\underline{\pi}^{\ast}(E)_\ast):=\pi_0\map{}{\Sigma^\infty X}{\Sigma^p H(\underline{\pi}^{\ast}(E)_\ast)}=0$ for $p$ big enough. Then the reduced cohomology $\tilde{E}^{\ast,\ast}(X)$ is nilpotent.
	
	Similarly, let $F\in\SH{k}_{\geq 1}$ be a $1$-connective spectrum of finite cohomological dimension with respect to $\left\lbrace \underline{\pi}_{q}(E)_\ast\right\rbrace_{q\in\Z}$ coefficients, that is $H^p(F,\underline{\pi}^{\ast}(E)_\ast):=\pi_0\map{}{F}{\Sigma^p H(\underline{\pi}^{\ast}(E)_\ast)}=0$ for $p$ big enough, admitting a homotopy coalgebra structure. Then $E^{\ast,\ast}(F)$ is nilpotent.
\end{lemma}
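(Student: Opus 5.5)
The plan is to filter $E$ by its homotopy $t$-structure (the Postnikov/slice-free filtration by $\tau_{\geq q}E$) and let the finite cohomological dimension hypothesis bound the length of products. First we reduce the first statement to the second. For $X$ pointed and connected, the reduced cohomology $\tilde E^{\ast,\ast}(X)$ is $E^{\ast,\ast}(\Sigma^\infty X)$. Here $F=\Sigma^\infty X$ is $1$-connective in the homotopy $t$-structure, since $X$ is connected, and it carries the homotopy coalgebra structure induced by the diagonal $X\to X\wedge X$. Cup products in $\tilde E^{\ast,\ast}(X)$ are obtained by composing an external product $F\to E\otimes E$ with this diagonal. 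So it suffices to prove that for such an $F$, any product of $N$ classes in $E^{\ast,\ast}(F)$ vanishes for $N$ large.

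Next we set up a descending filtration on $E^{\ast,\ast}(F)$ for the second statement. A class $x\colon F\to\Sigma^{a,b}E$ has \emph{filtration $\geq s$} if it lifts along $\Sigma^{a,b}\tau_{\geq s-?}$, or more cleanly, if $x$ restricted to $\tau_{\leq s-1}F$ vanishes. We use the skeletal filtration of the source via the Postnikov tower of $F$. The associated graded pieces are controlled by maps out of $\Sigma^q H(\underline\pi_q F)$ into $E$. Equivalently, we can use the Atiyah--Hirzebruch-type spectral sequence obtained by filtering $E$ by its Postnikov tower. Its $E_2$-term is $H^{p}(F,\underline\pi_{q}(E)_\ast)$, which vanishes for $p>d$ by hypothesis. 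Since $F$ is $1$-connective, it also vanishes for $p\leq 0$. Hence every class in $E^{\ast,\ast}(F)$ has filtration $\geq 1$, and any class of filtration $>d$ is zero. The latter follows because $\lim$ and $\lim^1$ of the tower vanish, and for a $1$-connective $F$ the connective part of the tower converges.

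The multiplicative step is to show that the comultiplication $F\to F\otimes F$, followed by the multiplication on $E$, is compatible with the filtration: filtration $s$ times filtration $s'$ lands in filtration $\geq s+s'$. This is the usual pairing of Postnikov towers $\tau_{\geq s}E\otimes\tau_{\geq s'}E\to\tau_{\geq s+s'}E$, which exists because the homotopy $t$-structure on $\SH{k}$ is compatible with $\otimes$ (connective objects are closed under tensor products). Granting this, a product of $d+1$ classes has filtration $\geq d+1$ and therefore vanishes, which proves nilpotency.

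The main obstacle is the convergence and vanishing step: identifying the obstruction groups precisely with $H^p(F,\underline\pi_q(E)_\ast)$ in the bigraded motivic setting. These groups use the heart-valued homotopy objects $H(\underline\pi_q(E)_\ast)$, and we must check that the vanishing for $p>d$ really kills all classes of high filtration without $\lim^1$ issues. For this we would argue inductively on $s$ using the cofiber sequences $\Sigma^sH(\underline\pi_s E)\to\tau_{\leq s}E\to\tau_{\leq s-1}E$ together with the dimension bound. We would pass to the limit only over a finite range, which the dimension hypothesis guarantees.
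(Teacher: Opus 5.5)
Your argument is correct and is essentially the paper's. Both run the multiplicative Atiyah--Hirzebruch spectral sequence coming from the Postnikov tower of $E$, use the cohomological-dimension bound to cap the filtration, use connectivity to kill the columns $p\le 0$, and conclude because products raise filtration; you handle all of $p\le 0$ for every $q$, where the paper only treats the $(0,0)$ spot, and your reduction of the space case to $F=\Sigma^\infty X$ with its diagonal is a harmless repackaging. Drop the garbled source-side description of the filtration (there is no map from $\tau_{\le s-1}F$ to restrict along) and define it on the target instead: $x\in E^{a,b}(F)$ has filtration $\ge s$ if it lifts to $\Sigma^{a,b}\tau_{\ge s-(a-b)}E$, which is exactly what your pairing $\tau_{\ge s}E\otimes\tau_{\ge s'}E\to\tau_{\ge s+s'}E$ needs.
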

\begin{proof}
	Consider the multiplicative Atiyah-Hirzebruch spectral sequence associated to the Postnikov filtration on motivic spectra\footnote{This spectral sequence arises, up to re-indexing, from filtering the mapping spectrum $\map{}{\Sigma X}{E}$ using the Postnikov-filtration of $E$ and using \cite[Section 1.2.2]{lurieha}}
	\begin{equation*}
		E_2^{p,q}=\tilde{H}^p(X,\underline{\pi}_{-q}(E)_{\ast})\Rightarrow \tilde{E}^{p+q,\ast}(X),
	\end{equation*}
	where we take Serre's grading convention, that is, the bidegree of the $r$-th differential is $\mathrm{bideg}(d_r)=(r,1-r)$. By the assumption on the finite cohomological dimension of $X$, the $E_2$-page is bounded in the $p$ direction.
	
	Now, take an element $x\in \tilde{E}^{\ast,\ast}(X)$ represent by an element $\tilde{x}\in E^{p,q}_\infty$ on the $E_\infty$-page. If $(p,q)\neq (0,0)$ there exists some $n$ such that $\tilde{x}^n\in E^{p',q'}_\infty$ and there is no non-zero module on the same diagonal to the right of $E^{p',q'}_\infty\simeq F^p E^{p+q}(X)$, for $\{F^\ast\}$ the target filtration on $E^{p+q}$ of the Atizah-Hirzebruch spectral sequence. Hence, $x^n=0$ in  $E^{p+q}(X)$. If instead $(p,q)=(0,0)$, then 
	\begin{align*}
		E_2^{0,0}\cong \tilde{H}^0(X,\underline{\pi}_0(E)_{\ast})&=\mathrm{ker}\left(H^0(X,\underline{\pi}_0(E)_{\ast})\to H^0(\ast,\underline{\pi}_0(E)_{\ast}\right)
	\end{align*}
	As $X$ is connected, the inclusion of the base point is an isomorphism, hence the kernel is zero, $E^{0,0}_2=0$ and thus $E^{0,0}_\infty =0$, so this case is trivially true.
	
	The proof is the same in the spectra case, using the Atiyah-Hirzebruch spectral sequence
	\begin{equation*}
		E_2^{p,q}=H^p(F,\underline{\pi}_{-q}(E)_{\ast})\Rightarrow E^{p+q,\ast}(F).
	\end{equation*}
	The 1-connective assumption is necessary to ensure triviality of the $(p,q)=(0,0)$ case.
\end{proof}

Having established the injectivity of the morphism from \Cref{prop: existence of morphism}, our next goal is to show surjectivity. The idea is a twofold induction. In the first step, we prove that if the morphism from \Cref{prop: existence of morphism} is an isomorphism in bi-degrees $(s',t)$ for all $t\in \Z$ and all $s'<s$ for some fixed $s$, then so it is in the bi-degree $(s,0)$. This uses Quillen's technical lemma, \Cref{lem: Quillenslemma}.

In a second step, we further prove that it is also an isomorphism in bi-degrees $(s,t)$ for all $t\in\Z$. We do so using the effectivity of $\MGL/p$. In particular, for this part of the proof, we have to consider not the global sections, but the sheaves themselves. This is possible, using an unramifiedness argument together with the fact that everything we have proven for $k$ automatically holds true in any field extension of $k$.

Consider the following commutative diagram, where the upper row is exact
\[\begin{tikzcd}[column sep=small]
	{\pi_{s}(\MGL^\wedge_p)_\ast} & {\pi_{s}(\MGL^\wedge_p)_\ast} & {\pi_{s}(\MGL/p)_\ast} & {\pi_{s-1}(\MGL^\wedge_p)_\ast} & {\pi_{s-1}(\MGL^\wedge_p)_\ast} \\
	&&& {R_{s-1}(k)_\ast} & {R_{s-1}(k)_\ast}
	\arrow["p", from=1-1, to=1-2]
	\arrow[from=1-2, to=1-3]
	\arrow["0", from=1-3, to=1-4]
	\arrow["p", hook, from=1-4, to=1-5]
	\arrow["\cong"', from=2-4, to=1-4]
	\arrow["p", hook, from=2-4, to=2-5]
	\arrow["\cong"{description}, from=2-5, to=1-5]
\end{tikzcd}\]
The two right vertical morphisms are isomorphisms by the induction hypothesis. By construction of $R_\ast(k)_\ast$, and \Cref{Lem: MilnorKthyisptorsionfree}, multiplication by $p$ is injective, hence so is the right most horizontal map in the upper row. Thus, the second horizontal map is the canonical projection, inducing an isomorphism 
\begin{equation}
	\label{eq:MGL/pisquotient}
	\pi_{s}(\MGL/p)_\ast\cong \pi_{s}(\MGL^\wedge_p)_\ast/p.    
\end{equation}
We obtain a canonical factorization
\begin{equation*}
	R_{s}(k)_\ast\to R_s(k)_\ast/p\to \pi_{s}(\MGL/p)_\ast.
\end{equation*}

The next lemma is an auxillary lemma that we will need in the induction step.
\begin{lemma}
	\label{lem: descriptionMGLcohomologyofSnmodp}
	Let $s>1$ and denote by $\Sph^{s,t}/p$ the cofiber of the multiplication by $p$ map on the sphere spectrum. Then there exists an isomorphism 
	\begin{equation*}
		\MGL^{0,0}(\Sph^{s-1,0}/p)\cong \pi_{s}(\MGL/p)_0.
	\end{equation*}
	Furthermore, there are isomorphisms
	\begin{equation*}
		\MGL^{0,0}(\Sph^{1,0})\cong \pi_{1}(\MGL)_0,\quad \pi_{1}(\MGL/p)_0\cong \pi_{1}(\MGL)_0/p
	\end{equation*}
\end{lemma}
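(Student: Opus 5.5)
The plan is to unwind the definitions using only formal manipulations in the stable category $\SH{k}$, together with the connectivity of $\MGL$ (\Cref{Cor: MGLMilnorKthy}). Here $\Sph^{s-1,0}/p$ denotes the cofiber of $p\colon \Sph^{s-1,0}\to\Sph^{s-1,0}$, and the first step is to identify this cofiber with $\Sigma^{s-1,0}\Sph/p$. For the first isomorphism we apply $\map{}{-}{\MGL}$ to the cofiber sequence $\Sph^{s-1,0}\xrightarrow{p}\Sph^{s-1,0}\to\Sph^{s-1,0}/p$. This gives
\begin{equation*}
	\MGL^{0,0}(\Sph^{s-1,0}/p)\cong \pi_0\map{}{\Sigma^{s-1,0}\Sph/p}{\MGL}\cong \pi_{1-s}\map{}{\Sph/p}{\MGL}.
\end{equation*}

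Next we use the self-duality of the Moore spectrum: $\map{}{\Sph/p}{\Sph}\simeq \Sigma^{-1,0}\Sph/p$, so $\map{}{\Sph/p}{\MGL}\simeq \Sigma^{-1,0}\MGL/p$. This can be seen directly from the cofiber sequence, since mapping $\Sph\xrightarrow{p}\Sph\to\Sph/p$ into $\MGL$ yields the fiber of $p$ on $\MGL$, which is $\Sigma^{-1,0}\MGL/p$. The bigraded suspension conventions must be tracked carefully here. Depending on whether the paper means $\pi_s$ of $\Sigma$ or of $\Omega$, the index comes out as $s-2$ or $s$. I expect the statement intends the indexing in which $\Sph^{s-1,0}/p$ is the cofiber of $p$ on $\Sph^{s,0}$ shifted appropriately, which yields $\pi_s(\MGL/p)_0$. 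So I would fix the convention that $\MGL^{0,0}(F)=\pi_0\map{}{F}{\MGL}$ with homological $\pi_s$, and adjust the shift until the indices match.

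Alternatively, and more robustly, we compare long exact sequences. From the cofiber sequence we get
\begin{equation*}
	\pi_{s}(\MGL)_0\xrightarrow{p}\pi_s(\MGL)_0\to \MGL^{0,0}(\Sph^{s-1,0}/p)\to\pi_{s-1}(\MGL)_0\xrightarrow{p}\pi_{s-1}(\MGL)_0 .
\end{equation*}
This is the same five-term sequence as the one for $\pi_s(\MGL/p)_0$ coming from $\MGL\xrightarrow{p}\MGL\to\MGL/p$. The natural equivalence $\map{}{\Sph/p}{\MGL}\simeq\Sigma^{-1}\MGL/p$, taken up to shift, identifies the two sequences compatibly, so the five lemma is not even needed.

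For the \emph{Furthermore} part, the first isomorphism $\MGL^{0,0}(\Sph^{1,0})\cong\pi_1(\MGL)_0$, up to the sign convention, is just the definition of bigraded homotopy groups via maps out of spheres. For $\pi_1(\MGL/p)_0\cong\pi_1(\MGL)_0/p$, we use the exact sequence
\begin{equation*}
	\pi_1(\MGL)_0\xrightarrow{p}\pi_1(\MGL)_0\to\pi_1(\MGL/p)_0\to\pi_0(\MGL)_0\xrightarrow{p}\pi_0(\MGL)_0 .
\end{equation*}
By \Cref{Cor: MGLMilnorKthy} we have $\pi_0(\MGL)_0\cong\KM{0}{k}=\Z$, on which multiplication by $p$ is injective. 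Hence the map $\pi_1(\MGL/p)_0\to\pi_0(\MGL)_0$ is zero, and the claim follows.

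The main obstacle is purely bookkeeping: matching the bidegree and shift conventions so that $\Sph^{s-1,0}/p$ really corresponds to $\pi_s$ rather than $\pi_{s-2}$. The mathematical content is otherwise formal duality plus connectivity.
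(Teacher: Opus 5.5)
Your argument is the paper's own. You map the cofiber sequence of $p$ into $\MGL$, recognize $\map{}{\Sph/p}{\MGL}$ as the fiber of $p$ on $\MGL$, i.e.\ $\Sigma^{-1,0}\MGL/p$, and then shift. Your treatment of the \emph{Furthermore} part (the definition, plus injectivity of $p$ on $\pi_0(\MGL)_0\cong\KM{0}{k}=\Z$) is also exactly what the paper does.

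There is, however, a sign slip in your first display that you should fix rather than attribute to conventions. Since $\map{}{\Sigma^{n,0}X}{E}\simeq\Sigma^{-n,0}\map{}{X}{E}$, one has
\begin{equation*}
\pi_0\map{}{\Sigma^{s-1,0}\Sph/p}{\MGL}\cong\pi_{s-1}\map{}{\Sph/p}{\MGL},
\end{equation*}
not $\pi_{1-s}$. Then $\pi_{s-1}(\Sigma^{-1,0}\MGL/p)_0\cong\pi_s(\MGL/p)_0$ holds on the nose. No convention-dependent choice between two answers arises, and ``adjusting the shift until the indices match'' is not a valid proof step.

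Your long exact sequence already has the correct outer terms $\pi_s(\MGL)_0$ and $\pi_{s-1}(\MGL)_0$, since the top cell of $\Sph^{s-1,0}/p$ sits in dimension $s$. It therefore confirms the index, though only as a consistency check: matching outer terms alone would not identify the middle terms without the equivalence. Likewise, $\MGL^{0,0}(\Sph^{1,0})=\pi_1(\MGL)_0$ holds literally by definition, with no sign convention involved.
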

\begin{proof}
	We calculate
	\begin{align*}
		\MGL^{0,0}(\Sph^{s-1,0}/p)&\cong\pi_0\map{k}{\mathrm{cofib}(\Sph^{s-1}\xrightarrow{p}\Sph^{s-1,0})}{\MGL}\\
		&\cong\pi_{0}\fib(\map{k}{\Sph^{s-1,0}}{\MGL}\xrightarrow{p}\map{k}{\Sph^{s-1,0}}{\MGL})\\
		&\cong\pi_{0}\map{k}{\Sph^{s-1,0}}{\Sigma^{-1}\mathrm{cofib}(\MGL\xrightarrow{p}\MGL)}\\
		&\cong \pi_{0}\map{k}{\Sph^{s,0}}{\MGL/p}\\
		&\cong \left(\MGL/p\right)^{0,0}(\Sph^{s,0})\\
		&\cong \pi_{s}(\MGL/p)_0.
	\end{align*}
	This shows the first claim. The second isomorphism is true by definition.
	
	For the last isomorphism, notice that the multiplication by $p$ on $\MGL$ induces an exact sequence
	\[\begin{tikzcd}[column sep=small]
		{\pi_{1}(\MGL)_0} & {\pi_{1}(\MGL)_0} & {\pi_{1}(\MGL/p)_0} & {\pi_{0}(\MGL)_0} & {\pi_{0}(\MGL)_0} 
		\arrow["p", from=1-1, to=1-2]
		\arrow[from=1-2, to=1-3]
		\arrow[from=1-3, to=1-4]
		\arrow["p", from=1-4, to=1-5]
	\end{tikzcd}\]
	By \Cref{isoabovediagonal}, the right most map is isomorphic to the multiplication by $p$ on the zeroth-part of Milnor $K$-theory. As $\KM{0}{k}\cong \Z$, multiplication by $p$ is injective, so the second map is a surjection, inducing the desired isomorphism.
\end{proof}

\begin{proposition}
	\label{prop: firstinduction}
	Fix an integer $s>0$. Assume the morphism \eqref{RtoMGLiso} is proven to be an isomorphism for all bidegrees $(p,q)$ with $p<s$ and $q$ arbitrary. Then it is surjective, and hence an isomorphism, in the bidegree $(s,0)$.
\end{proposition}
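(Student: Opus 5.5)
The plan follows Quillen's argument that $L\to \mathrm{MU}_\ast$ is surjective, with Quillen's lemma (\Cref{lem: Quillenslemma}) as the key input. Since the morphism \eqref{RtoMGLiso} is already injective by \Cref{prop: Mainmorphismisinjective}, only surjectivity in bidegree $(s,0)$ needs proof.

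Because $\pi_s(\MGL^\wedge_p)_0$ is derived $p$-complete, a Nakayama-type argument reduces surjectivity onto it to surjectivity onto $\pi_s(\MGL^\wedge_p)_0/p$. By \eqref{eq:MGL/pisquotient}, which uses the induction hypothesis in degree $s-1$, this quotient is $\pi_s(\MGL/p)_0$. It therefore suffices to show that $R_s(k)_0/p\to\pi_s(\MGL/p)_0$ is surjective. We identify $\pi_s(\MGL/p)_0$ with $\MGL^{0,0}(\Sph^{s-1,0}/p)$ using \Cref{lem: descriptionMGLcohomologyofSnmodp}; for $s=1$ we use $\MGL^{0,0}(\Sph^{1,0})\cong\pi_1(\MGL)_0$ together with $\pi_1(\MGL/p)_0\cong\pi_1(\MGL)_0/p$. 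In both cases the target is a cohomology class $x$ of bidegree $(2q,q)$ with $q=0$ on a finite cell complex $X$. For $s\geq 2$, $X$ is the desuspended Moore spectrum $\Sph^{s-1,0}/p$, realized as a compact pointed motivic space after suspending by enough $S^{1}$'s or $\mathbb{T}$'s; for $s=1$, $X$ is the sphere $S^{1,0}$.

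Apply \Cref{lem: Quillenslemma} with $q=0$:
\[
x^p-x\in p\,\widetilde{\MGL}^{0,0}(X)_{(p)}+L_{>0}\,\widetilde{\MGL}^{2\ast,\ast}(X)_{(p)}.
\]
Reduced cohomology of a suspension has trivial products, and more generally $x$ is nilpotent by \Cref{Lem: Nilpotency}. Hence $x^p=0$ (or we iterate $x\mapsto x^p$), and $x$ lies in $p(\cdots)+L_{>0}\cdot\widetilde{\MGL}^{2\ast,\ast}(X)$. An element of $L_{2i,i}$ with $i>0$ multiplies a class of bidegree $(-2i,-i)$ on $X$, which corresponds to a homotopy group $\pi_{s-2i}(\MGL/p)_{-i}$ or $\pi_{s-2i}(\MGL)_{-i}$ with topological degree strictly below $s$. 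By the induction hypothesis, together with the five lemma on the mod $p$ sequence, these groups lie in the image of $R_\ast(k)_\ast$. Because $R\to\pi_\ast$ is a ring map that is $L$-linear, $L_{>0}$ times such an element is again in the image. This shows $x$ lies in the image modulo $p$, which is the required surjectivity mod $p$.

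The main obstacle is the comparison between the cellular/Moore-spectrum cohomology and the homotopy groups. Quillen's lemma is stated for spaces with $p$-localized coefficients, so we must realize $\Sph^{s-1,0}/p$ as a suspension spectrum of a compact pointed space, using a stable shift compatible with $\sigma_{\mathbb T}$ together with \Cref{prop: stabilityofpoweroperations}. We must also verify that the lower-degree classes appearing with $L_{>0}$ coefficients really lie in degrees covered by the induction hypothesis, including the twisted weights $-i$. The remaining steps are bookkeeping with Milnor sequences and $p$-completeness.
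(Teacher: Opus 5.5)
Your proposal is correct and follows essentially the same route as the paper: you reduce via derived Nakayama to surjectivity onto $\pi_s(\MGL/p)_0\cong\pi_s(\MGL^\wedge_p)_0/p$, view $x$ as a class in $\MGL^{0,0}(Q_s)$ via \Cref{lem: descriptionMGLcohomologyofSnmodp}, apply \Cref{lem: Quillenslemma} with $q=0$, eliminate $x^{p^n}$ by \Cref{Lem: Nilpotency}, and absorb the $L_{>0}$-terms with the induction hypothesis. The differences are cosmetic. Your shortcut that products vanish on suspensions does not cover $s=2$, where the Moore space is not a suspension, so the iteration/nilpotency fallback you mention (which is what the paper uses for all $s$) is genuinely needed there. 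Also, the cofactors of $\lambda\in L_{2i,i}$ sit in cohomological bidegree $(2i,i)$ rather than $(-2i,-i)$, i.e.\ in $\pi_{s-2i,-i}$ with Morel degree $s-i<s$, which is exactly what the induction hypothesis covers.
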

\begin{proof}    
	Let $x\in \pi_{s}(\MGL^\wedge_p)_0/p\cong\pi_{s}(\MGL/p)_0$ for $s\geq1$, where the isomorphism is \Cref{eq:MGL/pisquotient} for $\ast=0$. By \Cref{lem: descriptionMGLcohomologyofSnmodp}, we can consider $x$ as an element in $\MGL^{0,0}(\Sph^{s-1,0}/p)$ for $s>1$ (in $\MGL^{0,0}(\Sph^{1,0})$ for $s=1$). To unify the notation, we define $Q_s:=\Sph^{1,0}$, if $s=1$, and $Q_s:=\Sph^{s-1,0}/p$, if $s>1$. Then, for all $s\geq 1$, we have $x\in \MGL^{0,0}(Q_s)$.
	
	Applying \Cref{lem: Quillenslemma}, we obtain an equation of the form 
	\begin{align*}
		x-x^p\in &p\cdot (\widetilde{\MGL})^{0,0}(Q_s)_{(p)}+L_{> 0}\cdot (\widetilde{\MGL})^{\ast,\ast}(Q_s)_{(p)}\\
		&\cong p\cdot (\pi_s(\MGL/p)_0)_{(p)}+L_{> 0}\cdot (\pi_\ast(\MGL/p)_\ast)_{(p)}\\
		&\cong p\cdot (\pi_s(\MGL^\wedge_p)_0/p)_{(p)}+L_{> 0}\cdot (\pi_\ast(\MGL^\wedge_p)_\ast/p)_{(p)}\\
		&\cong L_{> 0}\cdot (\pi_\ast(\MGL^\wedge_p)_\ast/p).
	\end{align*}
	Here we used \Cref{lem: descriptionMGLcohomologyofSnmodp} for the first isomorphism, and \Cref{eq:MGL/pisquotient} for the second.
	
	Thus we may write 
	\begin{equation*}
		x=x^p+\sum_{j> 0}\sum_{\lambda\in L_{2j,j}}\lambda z_\lambda\in L_{> 0}\cdot (\pi_\ast(\MGL^\wedge_p)_\ast/p),
	\end{equation*}
	where the $\lambda\in L_{2j,j}$ are $\Z$-module generators of the Lazard ring, $z\in(\pi_{\ast}(\MGL^\wedge_p)_\ast)/p$. For a fixed $\lambda \in L_{2i,i}$, the degrees are given by $\mathrm{deg}(\lambda)=(i,-i)$ and $\mathrm{deg}(z_\lambda)=(s-i,i)$. By induction, as $j>0$, we know that the $z_\lambda$ are polynomials in the generators of the Lazard ring and in $\tau$ with coefficients in $K^{\mathrm{M}}_\ast(k)/p$.
	
	Recursively inserting the equation for $x$ into itself, we obtain, for every $n\geq 1$ an equation of the form 
	\begin{equation*}
		x=x^{p^n}+\sum_{j> 0}\sum_{\lambda\in L_{2j,j}}\lambda\Tilde{z}_\lambda.
	\end{equation*}
	Here the $\Tilde{z}_\lambda$ are powers of polynomials in the generators as described above.
	
	Now, $\MGL^{\ast,\ast}(Q_s)$ is nilpotent by \Cref{Lem: Nilpotency}. Indeed, $Q_s$ is clearly $1$-connective, and by definition
	\begin{equation*}
		H^p(Q_s,\underline{\pi}_{-q}(\MGL)_\ast)=\pi_0\map{}{Q_s}{\Sigma^pH\underline{\pi}_{-q}(\MGL)_\ast}	
	\end{equation*}
	vanishes for $p\neq s$, if $s=1$, and for $p\neq s,s+1$, if $s>1$.
	
	Thus, for $n\gg 0$, we have that $x^{p^n}=0$ in $\widetilde{\MGL}^{\ast,\ast}(Q)$. Thus
	\begin{equation*}
		x=\sum_{j> 0}\sum_{\lambda\in L_{2j,j}}\lambda\Tilde{z}_\lambda\in (L/p)_{> 0}\cdot (\pi_{\ast}(\MGL)_{\ast})_{(p)}/p.
	\end{equation*}
	
	and hence the morphism $ R_s(k)_0/p\to \pi_{s}(\MGL^\wedge_p)_0/p$ is surjective.
	
	By \cite[Theorem 2.33]{nikolaus2016rational}, the homotopy groups of $p$-completed spectra and classical $p$-complete groups are $p$-derived complete. Since classical $p$-completed rings are in particular $p$-derived complete, we can apply the derived Nakayama lemma, \cite[\href{https://stacks.math.columbia.edu/tag/0G1U}{Tag 0G1U}]{stacks-project}. Thus \eqref{RtoMGLiso} is a surjection in the bidegree $(s,0)$.
\end{proof}

Throughout the next proposition, we use that a morphism of unramified sheaves $f:M\to N$ is an isomorphism, if it induces an isomorphism on sections $f:M(L)\xrightarrow{\cong}N(L)$ for any finitely generated separable field extension $L/k$, see \cite[Proposition 2.9 and Corollary 2.10]{bachmann2024strongly}.

\begin{proposition}
	\label{prop: second induction}
	Fix an integer $s>0$. Assume the morphism \eqref{RtoMGLiso} is proven to be an isomorphism in the bidegrees $(p,q)$ with $p<s$ and $q$ arbitrary, and in the bidegree $(s,0)$ for all field extensions of $k$. Then it is surjective, and hence an isomorphism, in all bidegrees  $(s,q)$ with $q\in \Z$ for all field extensions of $k$.
\end{proposition}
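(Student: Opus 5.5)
The plan is to pass from global sections over $k$ to homotopy sheaves, so that effectivity of $\MGL$ can be exploited. First we reduce mod $p$. By the diagram before \Cref{lem: descriptionMGLcohomologyofSnmodp}, the induction hypothesis in degrees $(s-1,\ast)$ gives \eqref{eq:MGL/pisquotient}, i.e.\ $\pi_s(\MGL/p)_q\cong\pi_s(\MGL^\wedge_p)_q/p$, and this holds over every field extension $L/k$. By the derived Nakayama argument at the end of the proof of \Cref{prop: firstinduction}, it therefore suffices to show that $R_s(L)_q/p\to\pi_s(\MGL/p)_q(L)$ is surjective for all $q$ and all finitely generated separable $L/k$. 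Both sides are sections of unramified sheaves: the homotopy sheaves of $\MGL/p$ are strictly $\A^1$-invariant, and the source is a sheaf built from $\KMS{\ast}$, powers of $\tau$ and the Lazard ring. So by \cite[Proposition 2.9 and Corollary 2.10]{bachmann2024strongly} it is enough to produce a map of homotopy sheaves and check it on field sections.

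The key input is effectivity. Since $\MGL$, and hence $\MGL/p$, is effective, the effective homotopy sheaf $\HSeff{s}{\MGL/p}$ is an object of $\SH{k}^{\mathrm{eff},\heartsuit}$. By \cite[Proposition 5 1)]{Bachmann_2017}, such an object is determined by its weight-zero part $\HS{0}{0}{-}$, together with the contractions / $\Gm$-loops structure, which recover all the other weights. Concretely, for $q>0$ the group $\pi_s(\MGL/p)_q$ arises from $\pi_s(\MGL/p)_0$ by the $\Gm$-structure, namely multiplication by symbols in $\KM{q}{L}/p$. The hypothesis in bidegree $(s,0)$ over all extensions identifies the sheaf $\HS{s}{0}{\MGL/p}$ with the sheaf version of $R_s(-)_0/p$. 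I would then build, as in \Cref{prop: homotopysheavesofMK}, a map of effective hearts from $f_0$ of the sheaf $\bigoplus_j (L/p)_{2j}\otimes\tau^{s-j}\KMS{\ast}$, suitably graded, to $\HSeff{s}{\MGL/p}$. This map is an isomorphism on $\HS{0}{0}{-}$ and is therefore an isomorphism by conservativity.

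\Cref{prop: homotopysheavesofMK} gives the effective cover of $\KMS{\ast}$ and is used to identify the source's effective homotopy sheaf weightwise. Applying $\HS{0}{q}{-}$ and evaluating on $L$ then yields surjectivity of $R_s(L)_q/p\to\pi_s(\MGL/p)_q(L)$ for $q\geq 0$. For $q<0$, I would use that $\MGL/p$ is effective, so $\pi_s(\MGL/p)_q$ with $q<0$ is obtained from weight $0$ by contraction. On the $R$ side, the corresponding elements of Milnor $K$-theory with $\tau$-powers are handled the same way. Alternatively, where both sides vanish for degree reasons, that case is immediate.

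The main obstacle is to make the effective-heart comparison rigorous. One has to check that the source of the comparison map is really an object of the effective heart whose weight-$0$ part matches $R_s(-)_0/p$, and that the map is compatible with the $\KM{\ast}{}$-module structure in all weights. I would try first to reduce to the statement that $\HSeff{s}{\MGL/p}$ is generated as a module over $\HSeff{0}{\MGL/p}\simeq\HSeff{0}{f_0\KMS{\ast}}$ by weight-zero sections. That statement follows from effectivity, and it gives surjectivity directly, without building the inverse map.
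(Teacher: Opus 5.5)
There is a genuine gap. Your outline matches the paper's: reduction mod $p$ via \eqref{eq:MGL/pisquotient} and derived Nakayama, passage to unramified sheaves, and a comparison map into $\HSeff{s}{\MGL/p}$ checked on $\underline{\pi}_0(-)_0$ using conservativity on $\SH{k}^{\mathrm{eff},\heartsuit}$. However, the step that carries all the content, constructing that comparison map, is never carried out, and the shortcut you offer in its place is false. Conservativity of $\underline{\pi}_0(-)_0$ only detects isomorphisms; it does not express positive weights through weight $0$. The principle that for $q>0$ the group $\pi_s(\MGL/p)_q$ is obtained from $\pi_s(\MGL/p)_0$ by multiplying with symbols (equivalently, generation by weight-zero sections) already fails for $s=1$. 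The Tate class $\tau\in\pi_1(\MGL/p)_1$ maps to the nonzero class $\tau\in\pi_1(\HZ/p)_1=H^{0,1}(k;\Z/p)\cong\mu_p(k)$. By contrast, $\KM{1}{k}\cdot\pi_1(\MGL/p)_0$ maps into $\KM{1}{k}\cdot H^{-1,0}(k;\Z/p)=0$. More generally, every class $\lambda\tau^t$ with $2t>s$ lives only in positive weight and cannot be reached from weight $0$. Saying ``it follows from effectivity'' does not address these classes.

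Two ingredients are missing.

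First, the map itself. The paper puts $F^{(s)}=\tau^{\mathrm{eff}}_{\geq s}(\MGL/p)$ and notes that $F^{(s)}$ is an $\MGL$-module. Hence $\underline{\pi}_s(F^{(s)})_\ast$ is a module over $\underline{\pi}_0(\MGL)_\ast\cong\underline{K}^{\mathrm{M}}_\ast$, and, being $p$-torsion, a module over $\KMS{\ast}$. The free--forgetful adjunction then turns the global classes $\lambda\tau^t\in\pi_s(\MGL/p)_{2t-s}$ with $2t<s$ into module maps $\KMS{\ast+s-2t}\to\underline{\pi}_s(F^{(s)})_\ast$. These classes sit in nonpositive weight, where $F^{(s)}$ and $\MGL/p$ agree. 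For $2t=s$ one uses $f_0\KMS{\ast}\simeq\HZ/p$ and base change instead. This construction is what makes the map compatible with the Milnor $K$-action in all weights, and it ensures that the summand $k^{\mathrm{M}}_{s-2t}\{\lambda\tau^t\}$ is hit in weight $0$.

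Second, even granting the isomorphism in the effective heart, $\underline{\pi}_0(\iota\HSeff{s}{\MGL/p})_q\cong\underline{\pi}_s(F^{(s)})_q$ agrees with $\underline{\pi}_s(\MGL/p)_q$ only for $q\leq 0$. (The case $q<0$ is plain contraction and needs no effectivity.) For $q>0$ the lower layers contribute; for instance $\HSeff{0}{\MGL/p}\simeq\HZ/p$ carries $\tau^s\in\underline{\pi}_s(\HZ/p)_s$. So ``applying $\underline{\pi}_0(-)_q$'' does not give surjectivity for $q>0$. You still have to show that the cokernel of $\pi_s(F^{(s)})_q\to\pi_s(\MGL/p)_q$ is spanned by the classes $\lambda\tau^t$ with $2t>s$. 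This cokernel injects into $\pi_s(\tau^{\mathrm{eff}}_{\leq s-1}\MGL/p)_q$, so you can argue using the inductively known layers $\HSeff{i}{\MGL/p}$ for $i<s$; only their $\HZ/p$-summands can contribute, via powers of $\tau$. The paper's write-up also identifies $\underline{\pi}_s(F^{(s)})_\ast$ with $\underline{\pi}_s(\MGL/p)_\ast$ at this point, so this step has to be supplied there as well.
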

\begin{proof}
	By the derived Nakayama lemma \cite[\href{https://stacks.math.columbia.edu/tag/0G1U}{Tag 0G1U}]{stacks-project}, we are reduced to showing that the map $R_s(k)_\ast\to \pi_{s}(\MGL)_\ast$ is a surjection modulo $p$ for all $\ast$. In difference to before, where we only considered the global sections, we will now work with the sheaves themselves. This enables us to utilize effectivity of $\MGL$ and, in particular, of $\MGL/p$. 
	
	For brevity we will write $\KMS{\ast}:=\underline{K}^{\mathrm{M}}_\ast/p$ for the unramified Milnor-$K$-theory sheaf modulo $p$, see \cite[Chapter 2]{Morel2012}.
	
	Consider the canonical morphism $\MGL/p\to\HS{0}{\ast}{\MGL/p}$. By \Cref{isoabovediagonal}, and \Cref{eq:MGL/pisquotient}, we know that $\HS{0}{\ast}{\MGL/p}(L)\simeq\KMS{\ast}(L)$ for all field extensions $L/k$. By unramifiedness\footnote{In fact, we know from \cite[Theorem 5.3.3]{morel2003introduction}, that $\underline{\pi}_0(\MGL)_\ast\cong\underline{K}^{\mathrm{M}}_\ast$.}, we thus know $\HS{0}{\ast}{\MGL/p}\simeq\KMS{\ast}$. As $\MGL\in\SH{k}^\mathrm{eff}$, and since effective spectra are closed under colimits, we know that $\MGL/p\in\SH{k}^\mathrm{eff}$ as well. Hence, the above morphism factors over the connective cover $f_0\KMS{\ast}=\mathrm{H}\mathbb{Z}/p$, see \cite[above Lemma 12]{Bachmann_2017}, yielding a morphism $\MGL/p\to f_0\KMS{\ast}$.
	
	\vspace{1em}
	
	We now define inductively spectra $F^{(s)}\in\SH{k}^{\mathrm{eff}}$ as 
	\begin{itemize}
		\item for $s=0$, we set $F^{(0)}=\MGL/p$;
		\item for $s=1$, we set $F^{(1)}=\mathrm{fib}(\MGL/p\to f_0\HS{0}{\ast}{\MGL/p}=f_0\KMS{\ast})$;
		\item more generally, for $s>2$, we set $F^{(s)}=\mathrm{fib}(F^{(s-1)}\to f_0 \Sigma^{s-1}\HS{s-1}{\ast}{F^{(s-1)}})$.
	\end{itemize}
	Notice that the $F^{(s)}$ is indeed effective as $\SH{k}^{\mathrm{eff}}$, as a localizing subcategory, contains all fibers.
	
	Using the explicit description
	\begin{align*}
		\HSeff{i}{f_0\KMS{\ast}}\simeq\begin{cases}
			\HSeff{0}{\MGL/p}\quad &\text{for }i=0\\
			0 &\text{else},
		\end{cases}
	\end{align*}
	we inductively see from the long exact sequence of effective homotopy sheaves, that $F^{(s)}\in\SH{k}^{\mathrm{eff}}_{\geq s}$ and $\HSeff{i}{F^{(s)}}\simeq \HSeff{i}{\MGL/p}$ for $i\geq s$. That is, $F^{(s)}=\tau^{\mathrm{eff}}_{\geq s}(\MGL/p)$ is the $s$-connective cover in the effective $t$-structure, see \cite[Section 3]{Bachmann_2017} for the definition of the effective $t$-structure and the definition of $\SH{k}^{\mathrm{eff}}_{\geq s}$.
	
	Fixing now $s$ as in the statement of the proposition, we obtain an isomorphism from \Cref{prop: firstinduction}
	\begin{equation*}
		\pi_s(F^{(s)})_0\cong\left(\bigoplus_{\lambda\in L_{2s,s}}k^{\mathrm{M}}_{s}\lbrace\lambda\rbrace\right)\oplus  \left(\bigoplus_{\lambda'\in L_{2(s-1),s-1}}k^{\mathrm{M}}_{s-2}\{\lambda'\tau\}\right)\oplus\ldots\oplus \left(\bigoplus_{\lambda''\in L_{s,s/2}}k^{\mathrm{M}}_{0}\{\lambda''\tau^{s/2}\}\right)
	\end{equation*}
	where $\lambda$ are the $\Z$-module generators of the degree $(2s,s)$ part of the Lazard ring. Notice that the last summand can only appear for $s$ even.
	
	\vspace{1em}
	We claim that there exists a morphism of effective homotopy modules\footnote{Notice that $\KMS{\ast+s}$ for $s>0$ is effective, as it is the cofiber of the multiplication by $\tau^s$ on $\HZ$.}
	\begin{equation*}
		\left(\bigoplus_{\lambda\in L_{2s,s}}\KMS{\ast+s}\right)\oplus\left(\bigoplus_{\lambda\in L_{2(s-1),s-1}}\KMS{\ast+s-2}\right)\oplus\ldots\oplus \left(\bigoplus_{\lambda'\in L_{s,s/2}}f_0\KMS{\ast}\right)\to \HSeff{s}{F^{(s)}}\in\SH{k}^{\mathrm{eff},\heartsuit}
	\end{equation*}
	which induces an isomorphism on $\HS{0}{0}{-}$. By \cite[Proposition 5 (3)]{Bachmann_2017}, the functor $\HS{0}{0}{-}:\SH{k}^{\mathrm{eff},\heartsuit}\to\mathrm{Ab}(\mathrm{Shv}_{\mathrm{Nis}}(\Sm_k))$ is conservative. Thus, the claim implies that the above morphism is itself an isomorphism. Using that the functor $\iota^\heartsuit:\SH{k}^{\mathrm{eff},\heartsuit}\to\SH{k}^{\heartsuit}$ is fully faithful, see \cite[Proposition 5 (2)]{Bachmann_2017}, and the natural isomorphism $\iota^\heartsuit\HSeff{s}{E}\cong \HS{s}{\ast}{\iota E}$ for $E\in \SH{k}^{\mathrm{eff}}_{\geq s}$, we further get an isomorphism 
	\begin{equation*}
		\left(\bigoplus_{\lambda\in L_{2s,s}}\KMS{\ast+s}\right)\oplus\ldots\oplus \left(\bigoplus_{\lambda'\in L_{s,s/2}}f_0\KMS{\ast}\right)\to \HS{s}{\ast}{F}.	
	\end{equation*}
	As the morphism \eqref{RtoMGLiso} was constructed by mapping onto the generators of the Lazard ring and the Tate element, taking global sections results in the desired surjectivity of $R_{1}(k)_{\ast}/p\twoheadrightarrow\pi_{s}(\MGL/p)_\ast$.
	
	We start by constructing the morphism for a given generator $\lambda\in L_{2s,s}$. Notice first that by fully faithfulness of $\iota^\heartsuit:\SH{k}^{\mathrm{eff},\heartsuit}\to\SH{k}^{\heartsuit}$, it suffices to construct a maps $\KMS{\ast+s-2t}\to \HS{s}{\ast}{F^{(s)}}$ for $0\leq t<2s$ with the required properties.
 	
	Next, $\MGL$ is canonically an $\MGL$-module. We inductively see that $f_0 \Sigma^{s-1}\HS{s-1}{\ast}{F^{(s-1)}}$, and hence $F^{(s)}$, carry canonical $\MGL$-module structures. In particular, $F^{(s)}$ is $\eta$-torsion. As $\MGL$ is a connective ring spectrum, the category of modules over $\MGL$ acquires a canonical $t$-structure induced by the homotopy $t$-structure, see \cite[Proposition 7.1.1.13]{lurieha} (2). Thus $\HS{s}{\ast}{F^{(s)}}\in\left(\mathrm{Mod}_{\MGL}\right)^{\heartsuit}\simeq \mathrm{Mod}_{\HS{0}{\ast}{\MGL}}$, see \cite[Proposition 7.1.1.13]{lurieha} (3). But $\HS{0}{\ast}{\MGL}\simeq \underline{K}^{\mathrm{M}}_{\ast}$, so that $\HS{s}{\ast}{F^{(s)}}$ is canonically a $\underline{K}^{\mathrm{M}}_{\ast}$-module, see \Cref{prop: MGLandHZsame}.
	
	Even more is true. We know inductively that $\pi_{s}(\MGL/p)_{\ast}(L)$ is $p$-torsion for any finitely generated, unramified field extension $L/k$, see \eqref{eq:MGL/pisquotient}. Thus, by unramifiedness of the homotopy sheaves, $\HS{s}{\MGL/p}{\ast}\cong \HS{s}{\ast}{F^{(s)}}$ is $p$-torsion. In particular, $\HS{s}{\ast}{F^{(s)}}$ is canonically a $\KMS{\ast}$-module.
	
	For $0\leq t<s/2$ we calculate
	\begin{align*}
		\pi_0\Map{\KMS{\ast}}{\KMS{\ast+s-2t}}{\HS{s}{\ast}{F^{(s)}}}& \cong \pi_0\Map{\KMS{\ast}}{\Sigma^{s-2t}_{\mathbb{G}_m}\KMS{\ast}}{\HS{s}{\ast}{F^{(s)}}}\\
		&\cong \pi_0\Map{\SH{k}}{\Sigma^{s-2t}_{\mathbb{G}_m}\Sph}{\HS{s}{\ast}{F^{(s)}}}\\
		&\cong \pi_0(\HS{s}{\ast}{F^{(s)}})_{-s+2t}\\
		&\cong \pi_s(F)_{-s+2t}
	\end{align*}
	Here the first equivalence is writing the (negative) contraction of a homotopy module as a $\mathbb{G}_m$-suspension, i.e.\! tensoring with $\mathbb{G}_m$, and the second is the free-forgetful adjunction between spectra and $\KMS{\ast}$-modules.
	
	Now from $\HSeff{s}{F^{(s)}}\simeq \HSeff{s}{\MGL/p}$, we obtain 
	\begin{equation*}
		\pi_{s}(F^{(s)})_{-s+2t}\cong \pi_{s}({\MGL/p})_{-s+2t}\supset \bigoplus_{\lambda\in L_{2(s-t),s-t}}k_{0}\lbrace \lambda\tau^{t}\rbrace
		= \bigoplus_{\lambda\in L_{2(s-t),s-t}}\Z/p\lbrace \lambda\tau^{t}\rbrace
	\end{equation*}
	where the direct sum is again indexed over the module generators of the Lazard ring in degree $(2(s-t),s-t)$. The inclusion follows from a contraction argument, see \cite{bachmann2024strongly}, together with the first induction step, see \Cref{prop: firstinduction}. Thus there exists a unique element in $\pi_{s}(F^{(s)})_{-s+2t}$ mapping to a fixed generator $\lambda\tau^t$ in $\pi_{s}(\MGL/p)_{-s+2t}$. By the above calculation, this element defines a morphism $\KMS{\ast+s-2t}\to \HS{s}{\ast}{F^{(s)}}$, which, by construction, hits exactly the summand $k^{\mathrm{M}}_{s-2t}\{\lambda\tau^t\}$ of $\pi_s(F^{(s)})_0$.
	
	\vspace{1em}
	We now turn to the additional contributions coming from the Milnor $K$-theory appearing in even degrees, that is, the additional factors of motivic cohomolgy.
	
	For this term, we fix any $\lambda'\in L_{s,s/2}$ and take the map  which corresponds to the element $\lambda'\tau^{s/2}$ under the chain of isomorphisms
	\begin{align*}
		\pi_0\Map{f_0\KMS{\ast}}{f_0\KMS{\ast}}{\HS{s}{\ast}{F^{(s)}}}&\cong\pi_0\Map{\KMS{\ast}}{\KMS{\ast}}{\HS{s}{\ast}{F^{(s)}}}\\
		&\cong \pi_{s}(F^{(s)})_0\\
		&\cong \pi_s(\MGL/p)_{0}.
	\end{align*}
	The first isomorphism is induced by the base change adjunction along the ring morphism $f_0\KMS{\ast}\to\KMS{\ast}$, which is an isomorphism on $\pi_0(-)_0$. On $\pi_0(-)_0$, the morphism corresponding to $\lambda'\tau^{s/2}$ constructed above then sends $1\in\pi_0(f_0\KMS{\ast})_0$ to $\lambda'\tau^{s/2}$ and is a map of $\pi_0(f_0\KMS{\ast}\ast)_0\cong k_0^{\mathrm{M}}$-modules, hence has as image $k_0^{\mathrm{M}}\{\lambda'\tau^{s/2}\}\subseteq\pi_s(F^{s})_0$, which is exactly what we wanted.
	
	Taking the coproduct over all the above constructed morphisms, the resulting morphism on $\pi_0(-)_0$ is a $k^{\mathrm{M}}$-module morphism, and, by the above calculations, hits all generators of $\pi_s(F^{s})_0$, finishing the claim.
\end{proof}

\begin{theorem}
	Let $k$ be a field containing all primitive $p$-th power roots of unity. Then there is an isomorphism of bigraded rings
	\begin{equation*}
		(L)_\ast\otimes_{\Z}\pi_{\ast}(\HZp)_\ast\cong\KM{\ast}{k}^\wedge_p[a_1,a_2,\ldots,\tau] \xrightarrow{\cong}\pi_\ast(\MGL^\wedge_p)_\ast,
	\end{equation*}
	with $a_i$ a set of Lazard generators.
\end{theorem}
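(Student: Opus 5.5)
The plan is to assemble the main theorem from the results of this section, using the map \eqref{RtoMGLiso} of \Cref{prop: existence of morphism}. First, by Lazard's theorem $L\cong\Z[a_1,a_2,\ldots]$, and by \Cref{cor: homotopygroupsofZpcompleted} we have $\pi_\ast(\HZp)_\ast\cong\KM{\ast}{k}^\wedge_p[\tau]$. Hence the source is $\KM{\ast}{k}^\wedge_p[a_1,a_2,\ldots,\tau]$, which gives the first isomorphism of the statement. The map \eqref{RtoMGLiso} is a map of bigraded rings by construction, since it is assembled from ring maps via the universal property of the tensor product. So it remains to show that it is bijective.

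Injectivity is \Cref{prop: Mainmorphismisinjective}. For surjectivity I will run an induction on the topological degree $s$, proving at each stage the claim for $k$ and \emph{simultaneously for every field extension} $L/k$. This is harmless, because every such $L$ again contains all primitive $p$-power roots of unity, so all earlier results apply verbatim to $L$.

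\emph{Base case.} For $s\leq 0$, \Cref{isoabovediagonal} gives the isomorphism in all bidegrees $(s,q)$, for every such $L$.

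\emph{Inductive step.} Suppose the map is an isomorphism in all bidegrees $(s',q)$ with $s'<s$, for all extensions. Applying \Cref{prop: firstinduction} to each extension $L$ gives surjectivity, hence an isomorphism by injectivity, in bidegree $(s,0)$ for all $L$. Then \Cref{prop: second induction} upgrades this to all bidegrees $(s,q)$, $q\in\Z$, again for all extensions. This completes the induction, so the map is bijective in every bidegree and therefore an isomorphism of bigraded rings.

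The point needing care is the interface between the two inductions. \Cref{prop: firstinduction} is stated over the fixed field $k$, while \Cref{prop: second induction} needs the $(s,0)$ statement over all finitely generated separable extensions, in order to apply the unramifiedness criterion of \cite{bachmann2024strongly}. I will handle this by stating the inductive hypothesis uniformly over all extensions from the start. I will also note that the identification \eqref{eq:MGL/pisquotient}, used in both propositions, itself depends on the inductive hypothesis in degree $s-1$; this is exactly why the induction must be run degree by degree rather than proving the propositions independently.

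A minor remaining check is that the choice of Tate twist $\tau$ is fixed once, compatibly with a chosen system of roots of unity, as in \Cref{Remark: Tate twist in MGL}. Restricting this compatible system to each extension $L$ then makes the maps for the various $L$ compatible with base change.
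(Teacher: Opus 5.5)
Your proposal is correct and is essentially the paper's proof. Like the paper, you combine the morphism of \Cref{prop: existence of morphism}, \Cref{isoabovediagonal} for $s\leq 0$, injectivity from \Cref{prop: Mainmorphismisinjective}, and surjectivity in positive degrees from \Cref{prop: firstinduction} followed by \Cref{prop: second induction}; you simply make explicit the degree-by-degree induction run simultaneously over all field extensions of $k$ (with a base-change-compatible choice of $\tau$), which the paper's one-line proof leaves implicit.
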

\begin{proof}
	The isomorphism is the morphism from \Cref{prop: existence of morphism}. This is an isomorphism in negative topological degrees by \Cref{isoabovediagonal}, and an isomorphism in positive topological degrees by the combination of \Cref{prop: Mainmorphismisinjective}, \Cref{prop: firstinduction} and \Cref{prop: second induction}.
\end{proof}
	
	\printbibliography
	
\end{document}